  \setlist[itemize]{leftmargin=*}
  \setlist[enumerate]{leftmargin=*}
\DeclareMathAlphabet{\mathpzc}{OT1}{pzc}{m}{it}
\newcommand{\TheTitle}{Asymptotic compatibility of parametrized optimal design problems}
\newcommand{\ShortTitle}{Asymptotically Compatible Schemes}
\newcommand{\TheAuthors}{T.~Mengesha, A.J.~Salgado, J.M.~Siktar}
\headers{\ShortTitle}{\TheAuthors}
\title{\TheTitle}
\author{Tadele Mengesha\thanks{Department of Mathematics, University of Tennessee, Knoxville, TN 37996, USA. \\
    (\email{megnesha@utk.edu}, \url{https://math.utk.edu/people/tadele-mengesha/})}
  \and
  Abner J.~Salgado\thanks{Department of Mathematics, University of Tennessee, Knoxville, TN 37996, USA. \\
    (\email{asalgad1@utk.edu}, \url{https://math.utk.edu/people/abner-salgado/})}
  \and
  Joshua M.~Siktar\thanks{Department of Mathematics, University of Tennessee, Knoxville, TN 37996, USA. \\
  \emph{Current address:} Department of Mathematics, Texas A\&M University, College Station, TX 77843, USA. \\
    (\email{jmsiktar@tamu.edu}, \url{https://www.math.tamu.edu/directory/formalpg.php?user=jmsiktar})}
}
\begin{document}

\maketitle

\everymath{\displaystyle}

\begin{abstract}
  We study optimal design problems where the design corresponds to a coefficient in the principal part of the state equation. The state equation, in addition, is parameter dependent, and we allow it to change type in the limit of this (modeling) parameter. We develop a framework that guarantees asymptotic compatibility, that is unconditional convergence with respect to modeling and discretization parameters to the solution of the corresponding limiting problems. This framework is then applied to two distinct classes of problems where the modeling parameter represents the degree of nonlocality. Specifically, we show unconditional convergence of optimal design problems when the state equation is either a scalar-valued fractional equation, or a strongly coupled system of nonlocal equations derived from the bond-based model of peridynamics.  
\end{abstract}

\begin{keywords}
Optimal design, asymptotic compatibility, finite element method, varying fractional parameter.
\end{keywords}

\begin{AMS}
49M41, 49M25, 45F15, 65R20, 74P05.
\end{AMS}

\section{Introduction}
\label{sec:Intro}
This work analyzes the approximation of solutions $(\mathfrak{a}, u)$ of parametrized optimal design problems of  the form
\begin{equation}\label{GenericOptDesign}
  \min \{ J(\mathfrak{a}, u) \ | \ (\mathfrak{a}, u) \in \mathcal{H} \times X_{\delta} \},
\end{equation}
subject to the constraint
\begin{equation}\label{GenericOptDesignb}
  \frakB_\delta[\fraka](u,v) = \langle f, v \rangle, \ \ \forall v \in X_\delta. 
\end{equation}
The specific forms of the design space $ \mathcal{H}$, the state space $X_\delta$, and the objective  functional $J$ will be introduced later. The function $f$ is taken to be a fixed data. For a given $\mathfrak{a} \in \mathcal{H}$  and the modeling parameter $\delta\geq 0$,  $\frakB_\delta[\fraka]$  is a bilinear form on $X_\delta$.  The parametrized bilinear forms $\frakB_\delta[\fraka]$ have an asymptotic continuity with respect to the modeling parameter. In other words, there is a given bilinear form $\frakB_0[\fraka]$ such that, as $\delta \to 0$,  $\frakB_\delta[\fraka] \to \frakB_0[\fraka]$, in some appropriate sense. 

In this paper, we provide sufficient conditions for \eqref{GenericOptDesign}---\eqref{GenericOptDesignb} to have a solution corresponding to each $\delta.$ In addition, we present an approximation framework that not only gives an approximation to solutions for each $\delta$ but also remains compatible with the asymptotic continuity property of the bilinear forms. That is, approximate solutions that depend of model parameter $\delta$ and the discrete parameter converge, unconditionally, to a solution of the limiting optimal design problem. 

Approximation schemes of the above type, called \emph{asymptotically compatible (AC) schemes},  for solutions of parametrized problems have become a focus of research in recent years,  especially in connection with nonlocal models of diffusion as well as mechanics, where the parameter $\delta$ represents the degree of nonlocality.  In this case, the main question of interest is, in the event of vanishing nonlocality, i.e. $\delta\to 0$, whether the approximate solutions converge unconditionally  to the solution of the corresponding local equation. 
Reference \cite{MR3231986} was the first to rigorously  analyze AC schemes for linear equations in nonlocal conductivity and systems of equations derived from the state-based model of peridynamics. Other papers studying the asymptotic compatibility of scalar-valued nonlocal equations include \cite{du2018asymptotically, leng2021asymptotically, tao2017nonlocal, tian2016asymptotically, tian2015nonconforming, glusa2023asymptotically, du2024asymptoticallycompatibleschemesnonlinear}; those concerning peridynamic models include \cite{leng2020asymptotically, tao2017nonlocal}; and the Nonlocal Ohta--Kawasaki model is discussed in \cite{luo2024asymptotically}. These works show how critical the judicious choice of discretization is. Otherwise, approximate solutions may only be conditionally convergent to the solution of the corresponding local, continuous problem  \cite{tian2013analysis, MR4064535}. 

The current work, motivated by the above papers, aims at extending the analysis of the AC framework to parametrized optimal design problems. The paper has two parts. The first part introduces  the abstract AC framework for optimal design problems; that is, we provide sufficient conditions that demonstrate unconditional convergence of approximate solutions with respect to a modeling and a discretization parameter. The second part applies the developed framework to two different classes of nonlocal optimal design problems, where the modeling parameter is the degree of nonlocality. To our knowledge, this is the first paper to discuss AC schemes for optimal design problems.

The development of the abstract asymptotic compatibility framework for optimal design problems essentially follows what has been done in \cite{MR3231986,  du2024asymptoticallycompatibleschemesnonlinear} while paying special attention to the nonlinearity of our problem. Indeed, the constraint equation asserts a nonlinear relationship between a choice of material represented by $\mathfrak{a}$, and the resulting state $u$.

The first class of problems of the form \eqref{GenericOptDesign}---\eqref{GenericOptDesignb} that we apply our theory to is that of nonlocal optimal design problems modeling conductivity. The motivation here is to optimally distribute a (set of) materials within a given domain to achieve a desired conductivity property; see \cite{evgrafov2020nonlocal, evgrafov2019sensitivity}. The second class of  problems comes from nonlocal mechanics, where we look for an optimal constitutive material distribution that gives a globally rigid material. Here we use the linearized bond-based model of peridynamics (PD) \cite{silling2000reformulation, silling2007peridynamic} as the state equation; its solutions are vector-valued displacement fields. The PD model was introduced to model physical phenomena with inherent discontinuities, such as the formation of cracks in solids \cite{silling2005peridynamic, silling2010crack, silling2014peridynamic, du2020nonlocal, diehl2022comparative}. In both of these model problems, the design space will be a subset of  the set of positive and bounded functions, which will serve as coefficients in the bilinear form and represent material properties.   

A key step for studying existence of solutions to these optimal design problems is to select a single notion of convergence for coefficients shared between the nonlocal and local problems. For local optimal design problems, the classical notions of $G$- and $H$-convergence play an important role in studying existence of solutions. The reader is referred to \cite{tartar1979compensated, tartar1995remarks, allaire2001some, spagnolo1968sulla, spagnolo1976convergence, spagnolo2007sulla, tartar1975problemes, tartar1979estimations, tartar1976quelques, simon1979g, zhikov1981g, jikov2012homogenization} for classical results, and to \cite{deckelnick2011identification, deckelnick2012convergence, hinze2016matrix} for applications to local optimal design problems. Meanwhile, for nonlocal equations, analogous results are proven in \cite{bonder2017h, bellido2021simple, kassmann2019homogenization, munoz2020elementary}, under the assumption that the design coefficients converge weak-*  in $L^{\infty}$. In addition, in these references it is shown in \cite{jikov2012homogenization, allaire2001some} that the weak-* limit does not coincide with the classical homogenized limit.

An alternative approach is to use a smoother class of coefficients, one where all sequences have uniformly convergent sub-sequences. This idea was explored for nonlocal problems in \cite{andres2017convergence, andres2021optimal}. However, assuming higher regularity of the coefficients is not physically realistic for many applications.  As a result, this paper uses a third approach akin to the one used in \cite{andres2015nonlocal, andres2023minimization}: we use a compliance cost functional to avoid the issues with $G$-convergence, while still using a rougher class of coefficients where weak-* $L^\infty$ convergence is the natural choice, for both the nonlocal and local problems. 

In addition, we will approximate solutions to these optimal design problems using the finite element method. References \cite{bonito2018numerical, borthagaray2019weighted, acosta2017fractional, d2019priori, MR3702417} study the finite element approximations of solutions to nonlocal equations, while \cite{deckelnick2012convergence, deckelnick2011identification, hinze2016matrix} provide relevant techniques specific to the approximation of local optimal design problems. These references also provide numerical illustrations.  We also provide numerical results for the aforementioned nonlocal conductivity problem, and detail the utilized projected gradient descent algorithm. To our knowledge, this is the first work to include numerical results for any nonlocal optimal design problem.  We finally mention that there is already a wide literature of works that consider implementation of finite element schemes for solving the scalar-valued fractional Laplacian equation and its variants, such as \cite{acosta2017fractional, acosta2017short, ainsworth2018towards, dohr2019fem, feist2023fractional, gimperlein2019corner, lei2023finite, zhou2023adaptive}. The paper \cite{d2019priori} also gives numerical results for fractional optimal control problems.

Let us outline the contents of our the paper. The abstract asymptotic compatibility framework is developed in Section \ref{sec:AbstractSetting}. Section \ref{sec:Conductivity} gives the first application of this framework, to a family of problems from nonlocal conductivity. 
Section \ref{sec:Peridynamics} then gives the second application of our asymptotic compatibility framework, to a family of problems based on the bond-based model of peridynamics. Finally, we provide some numerical experiments for the nonlocal conductivity problem in Section \ref{sec:Numerics}.

We close this introduction by describing notations we will be using throughout this work. The relation $A \lesssim B$ shall stand for $A \leq c B$ for a nonessential constant $c$ whose value may change at each occurrence. $A \eqsim B$ is short for $A \lesssim B \lesssim A$. The symbol $\hookrightarrow$ denotes a continuous embedding between topological vector spaces.

\section{A general framework}
\label{sec:AbstractSetting}

Let us describe the general framework of our family of optimal design problems.

\subsection{Parametrized optimal design problems}
We make the following assumptions.
\begin{itemize}
  \item The \emph{design space} is a compact metric space $(\calH, d)$.

  \item Let $X_0$ and $X_1$ be Hilbert spaces with $X_0$ compactly embedded and dense in $X_1$. The parametrized \emph{state spaces} comprise a family of intermediate and nested Hilbert spaces $\{X_\delta\}_{\delta \in [0,1]}$, which satisfy
  \[
    X_{\delta_1} \hookrightarrow X_{\delta_2}, \qquad \forall \delta_1, \delta_2 \in [0,1] \ : \ \delta_1 < \delta_2,
  \]
  with an embedding constant independent of $\delta_1$ and $\delta_2$. The inner product in $X_1$ shall be denoted by $\langle \cdot, \cdot \rangle$.

  \item The parametrized \emph{state equations} are described by a given, and fixed, $f \in X_1$ and a family of mappings $\{\frakB_\delta\}_{\delta \in [0,1)}$ with $\frakB_\delta : \calH \to \calL_2^{\sym}(X_\delta)$, where $\calL_2^{\sym}(Y)$ denotes the space of bounded and symmetric bilinear forms on $Y$. Indeed, for each $\delta \in [0,1)$, the state equation is as follows. Assume that $\fraka \in \calH$ is given. Find $u \in X_\delta$ such that
  \begin{equation}
  \label{eq:StateDeltaAbs}
    \frakB_\delta[\fraka](u,v) = \langle f, v \rangle, \qquad \forall v \in X_\delta.
  \end{equation}
  Owing to symmetry, this is equivalent to the minimization of the following energy:
  \begin{equation}
  \label{eq:EnergyDeltaAbs}
    E_\delta[\fraka](w) = \frac12 \frakB_\delta(\fraka)(w,w) - \langle f, w \rangle.
  \end{equation}

  \item For each $\delta \in [0,1)$ the \emph{admissible set} is
  \begin{equation}
  \label{eq:AdmissSetDeltaAbs}
    \calZ^\delta = \left\{ (\fraka,u) \in \calH \times X_\delta \ \middle| \ u \in \argmin_{w \in X_\delta} E_\delta[\fraka](w) \right\}.
  \end{equation}
  Notice that, at this stage, nothing is preventing $\calZ^\delta$ from being empty.

  \item The \emph{objective} $J: \calH \times X_1 \to \R$ shall be of regularized compliance type. Namely, we assume we have at hand $\phi : \calH \to \R$ that is non-negative, convex, and lower semicontinuous. Then, for all $\frakb \in \calH$ and $w \in X_1$, we have
  \begin{equation}
  \label{eq:ObjectiveAbs}
    J(\frakb,w) = \langle f, w \rangle + \phi(\frakb).
  \end{equation}
\end{itemize}

With this notation at hand we can describe the class of abstract parametrized optimal design problems that we are interested in. For each $\delta \in [0,1)$ we seek
\begin{equation}
\label{eq:OptDesignDeltaAbs}
  (\ofraka_\delta, \ou_\delta) \in \argmin \left\{ J(\fraka,u) \ \middle| \ (\fraka,u) \in  \calZ^\delta \right\}.
\end{equation}

\subsection{Structural assumptions}
\label{sub:AssumeAbstract}

We begin with a series of assumptions that will be used not only to prove existence of solutions to our parametrized optimal design problems, but will also be used to study their asymptotic compatibility. First, the family $\{X_\delta \}_{\delta \in [0,1]}$ satisfies:
\begin{itemize}
  \item \textbf{Uniform embedding:} There is $M>0$ such that for every $\delta \in [0,1)$ and all $v \in X_\delta$ we have
  \begin{equation}
  \label{eq:S1}
  \tag{S1}
    \| v \|_{X_1} \leq M \| v \|_{X_\delta}.
  \end{equation}

  \item \textbf{Asymptotic compactness:} Let the family $\{ v_\delta \}_{\delta \in (0,1)}$ be such that $v_\delta \in X_\delta$ and satisfies
  \begin{equation}
  \label{eq:S2}
  \tag{S2}
    \sup_{\delta \downarrow 0} \| v _\delta \|_{X_\delta} < \infty,
  \end{equation}
  then, $\{ v_\delta \}_{\delta \in (0,1)}$ is relatively compact in $X_1$, and any limit point belongs to $X_0$.
\end{itemize}

In addition, the family of bilinear forms $\{\frakB_\delta\}_{\delta \in [0,1)}$ satisfies:
\begin{itemize}
  \item \textbf{Strong continuity:} For every $\delta \in [0,1)$ the mapping $\frakB_\delta$ is strongly continuous, \ie if $\{\fraka_j\}_{j =1}^\infty \subset \calH$ is such that $\fraka_j \overset{d}{\to} \fraka \in \calH$, then, for every $v \in X_\delta$, we have
  \begin{equation}
  \label{eq:E1}
  \tag{B1}
    \lim_{j \to \infty} \frakB_\delta[\fraka_j](v,v) = \frakB_\delta[\fraka][v,v].
  \end{equation}

  \item \textbf{Uniform boudnedness:} There is $A>0$ such that, for every $\delta \in [0,1)$ and all $\fraka \in \calH$, we have
  \begin{equation}
  \label{eq:E2}
  \tag{B2}
    \left| \frakB_\delta[\fraka](v,v) \right| \leq A \| v \|_{X_\delta}^2, \qquad \forall v \in X_\delta.
  \end{equation}

  \item \textbf{Uniform coercivity:} There is $\alpha > 0$ such that, for all $\delta \in [0,1)$ and all $\fraka \in \calH$, we have
  \begin{equation}
  \label{eq:E3}
  \tag{B3}
    \alpha \| v \|_{X_\delta}^2 \leq \frakB_\delta[\fraka](v,v), \qquad \forall v \in X_\delta.
  \end{equation}

  \item \textbf{Parametric continuity:} Assume that $\{\fraka_\delta\}_{\delta \in (0,1)} \subset \calH$ and $\fraka \in \calH$ are such that, as $\delta \downarrow 0$, we have $\fraka_\delta \overset{d}{\to} \fraka$. Then,
  \begin{equation}
  \label{eq:2.22}
  \tag{B4}
    \lim_{\delta \downarrow 0} \left( \frakB_\delta[\fraka_\delta](v,w) - \frakB_\delta[\fraka](v,w) \right) = 0, \qquad \forall v, w \in X_0.
  \end{equation}
  Moreover,
  \begin{equation}
  \label{eq:2.23}
  \tag{B5}
    \lim_{\delta \downarrow 0} \frakB_\delta[\fraka](v,w) = \frakB_0[\fraka](v,w) , \qquad \forall v, w \in X_0.
  \end{equation}

  \item \textbf{Lower semicontinuity:} Let $\fraka \in \calH$. Assume that $\{v_\delta  \in X_\delta \}_{\delta \in (0,1)}$ is such that there is $v \in X_0$ for which, as $\delta \downarrow 0$, we have $v_\delta \to v$ in $X_1$. Then,
  \begin{equation}
  \label{eq:2.24}
  \tag{B6}
    \frakB_0[\fraka](v,v) \leq \liminf_{\delta \downarrow 0} \frakB_\delta[\fraka](v_\delta,v_\delta).
  \end{equation}
\end{itemize}

\subsection{Existence of minimizers}
\label{sub:Analysis}
We now analyze the parametrized optimal design problems \eqref{eq:OptDesignDeltaAbs}. To begin we notice that, owing to the given assumptions, the state equations are well-posed and, more importantly, the solutions are bounded uniformly with respect to the parameter $\delta$.

\begin{lemma}[uniform well posedness]
\label{lem:StatesAreOKAbs}
For every $\delta \in [0, 1)$ and all $\fraka \in \calH$ there is a unique $u_\delta \in X_\delta$ that solves \eqref{eq:StateDeltaAbs}. In addition, this solution satisfies
\[
  \| u_\delta \|_{X_\delta} \leq \frac{M}\alpha \| f\|_{X_1},
\]
and it is uniquely characterized by the optimality condition
\begin{equation}
  \label{eq:OptCondStateAbs}
    E_\delta[\fraka](u_\delta) = -\frac12 \frakB_\delta[\fraka](u_\delta,u_\delta).
\end{equation}
\end{lemma}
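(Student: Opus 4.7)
The statement is a direct application of the Lax--Milgram lemma together with the uniform embedding assumption, so my plan is fairly mechanical.

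First, I would fix $\delta \in [0,1)$ and $\fraka \in \calH$ and set up Lax--Milgram on the Hilbert space $X_\delta$. Assumption \eqref{eq:E2} gives that $\frakB_\delta[\fraka]$ is bounded on $X_\delta$, and \eqref{eq:E3} gives coercivity with constant $\alpha$ independent of $\delta$ and $\fraka$. To verify that $v \mapsto \langle f, v \rangle$ is a bounded linear functional on $X_\delta$, I would chain the uniform embedding \eqref{eq:S1} with the Cauchy--Schwarz inequality in $X_1$:
\[
  |\langle f, v\rangle| \leq \|f\|_{X_1} \|v\|_{X_1} \leq M \|f\|_{X_1} \|v\|_{X_\delta}.
\]
Lax--Milgram then yields a unique $u_\delta \in X_\delta$ solving \eqref{eq:StateDeltaAbs}.

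For the a priori bound, I would test \eqref{eq:StateDeltaAbs} with $v = u_\delta$, apply coercivity on the left and the embedding estimate above on the right, and cancel one factor of $\|u_\delta\|_{X_\delta}$ to obtain $\|u_\delta\|_{X_\delta} \leq (M/\alpha) \|f\|_{X_1}$.

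Since $\frakB_\delta[\fraka]$ is symmetric by construction (it maps into $\calL_2^{\sym}(X_\delta)$) and coercive, the equivalence between the variational problem \eqref{eq:StateDeltaAbs} and the minimization of the quadratic energy \eqref{eq:EnergyDeltaAbs} is standard, so $u_\delta$ is also the unique minimizer of $E_\delta[\fraka]$. Finally, to obtain the identity \eqref{eq:OptCondStateAbs}, I would take $v = u_\delta$ in the state equation to get $\frakB_\delta[\fraka](u_\delta,u_\delta) = \langle f, u_\delta \rangle$ and substitute this into the definition of $E_\delta[\fraka](u_\delta)$; the two terms combine to $-\tfrac12 \frakB_\delta[\fraka](u_\delta,u_\delta)$.

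There is no serious obstacle here: the only assumption one really has to be careful to invoke is \eqref{eq:S1}, which is what allows the fixed datum $f \in X_1$ to act as a bounded functional on the possibly smaller space $X_\delta$ with a norm bound independent of $\delta$, and this is precisely what makes the estimate on $\|u_\delta\|_{X_\delta}$ uniform in the modeling parameter.
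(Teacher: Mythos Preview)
Your proposal is correct and matches the paper's proof essentially step for step: Lax--Milgram via \eqref{eq:E2} and \eqref{eq:E3}, the bound from testing with $v=u_\delta$ together with \eqref{eq:S1}, and the optimality identity from the same test function inserted into the energy. If anything, you spell out more explicitly why $\langle f,\cdot\rangle$ is bounded on $X_\delta$ and why the variational and minimization formulations coincide, which the paper leaves implicit.
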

\begin{proof}
Owing to \eqref{eq:E2} and \eqref{eq:E3}, existence and uniqueness follows from the Lax--Milgram lemma. The claimed estimate follows from \eqref{eq:S1} and \eqref{eq:E3} by setting $v = u_\delta$ in \eqref{eq:StateDeltaAbs}. Finally, the optimality condition \eqref{eq:OptCondStateAbs} is obtained by setting, in \eqref{eq:StateDeltaAbs}, $v = u_\delta$ and using the definition of the energy $E_\delta[\fraka]$.
\end{proof}

\begin{remark}[notation]
Notice that the previous result implies that, for every $\delta \in [0,1)$, the set $\calZ^\delta$ is not only nonempty, but it is actually the graph of a mapping $\calH \to X_\delta$. We shall denote this mapping by $T^\delta$. In addition, we define the \emph{reduced cost} (objective) to be, for every $\delta \in [0,1)$,
\begin{equation}
\label{eq:ReducedCostAbs}
  r^\delta(\fraka) = J(\fraka,T^\delta(\fraka)).
\end{equation}

\end{remark}

We are now ready to show existence of solutions to our parametrized optimal design problems \eqref{eq:OptDesignDeltaAbs}.

\begin{theorem}[existence]\label{thm:ExistenceAbs} Under the stated assumptions, for every $\delta \in [0,1)$, problem \eqref{eq:OptDesignDeltaAbs} has a solution.
\end{theorem}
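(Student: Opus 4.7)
The plan is to apply the direct method of the calculus of variations to the reduced cost $r^\delta$ defined in \eqref{eq:ReducedCostAbs}, exploiting that $\calH$ is a compact metric space. The first step is to pick a minimizing sequence $\{\fraka_n\}_{n=1}^\infty \subset \calH$ for $r^\delta$ and, by the compactness of $\calH$, to extract a subsequence (not relabeled) along which $\fraka_n \overset{d}{\to} \bar\fraka \in \calH$. Setting $u_n := T^\delta(\fraka_n)$, Lemma~\ref{lem:StatesAreOKAbs} supplies the uniform bound $\|u_n\|_{X_\delta} \leq (M/\alpha)\|f\|_{X_1}$, so by reflexivity of the Hilbert space $X_\delta$ I can pass to a further subsequence along which $u_n \rightharpoonup \tilde u$ weakly in $X_\delta$. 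The candidate minimizer will be the pair $(\bar\fraka, \bar u)$ with $\bar u := T^\delta(\bar\fraka)$, and the task reduces to showing $r^\delta(\bar\fraka) \leq \liminf_n r^\delta(\fraka_n)$.

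The crux is the one-sided comparison $\langle f, \bar u\rangle \leq \langle f, \tilde u\rangle$ between the limiting states. Passing to the limit directly in the state equation $\frakB_\delta[\fraka_n](u_n, v) = \langle f, v\rangle$ is not straightforward because the bilinear form depends on $n$ and \eqref{eq:E1} provides only pointwise continuity in the fixed argument. My way around this is to test the minimization inequality $E_\delta[\fraka_n](u_n) \leq E_\delta[\fraka_n](\bar u)$ against the \emph{fixed} element $\bar u$. Using the state equation at $u_n$ to rewrite $E_\delta[\fraka_n](u_n) = -\tfrac12\langle f, u_n\rangle$, this rearranges to
\[
  \langle f, u_n\rangle \geq 2\langle f, \bar u\rangle - \frakB_\delta[\fraka_n](\bar u, \bar u).
\]
The right-hand side now evaluates $\frakB_\delta$ only on the fixed vector $\bar u$, so \eqref{eq:E1} applies and yields $\frakB_\delta[\fraka_n](\bar u, \bar u) \to \frakB_\delta[\bar\fraka](\bar u, \bar u) = \langle f, \bar u\rangle$. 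On the left, the uniform embedding \eqref{eq:S1} makes $\langle f, \cdot\rangle$ a continuous linear functional on $X_\delta$, so weak convergence gives $\langle f, u_n\rangle \to \langle f, \tilde u\rangle$. Combining these two limits produces the desired inequality.

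To close the argument I combine this comparison with the lower semicontinuity of $\phi$, which gives $\phi(\bar\fraka) \leq \liminf_n \phi(\fraka_n)$ since $\fraka_n \overset{d}{\to} \bar\fraka$. Then
\[
  r^\delta(\bar\fraka) = \langle f, \bar u\rangle + \phi(\bar\fraka) \leq \lim_n \langle f, u_n\rangle + \liminf_n \phi(\fraka_n) \leq \liminf_n \bigl( \langle f, u_n\rangle + \phi(\fraka_n) \bigr) = \liminf_n r^\delta(\fraka_n) = \inf_{\calH} r^\delta,
\]
so $(\bar\fraka, \bar u) \in \calZ^\delta$ attains the minimum. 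I expect the middle paragraph to be the only real obstacle: nothing in the standing assumptions provides strong operator continuity of $\fraka \mapsto \frakB_\delta[\fraka]$ nor any compactness that would promote $u_n \rightharpoonup \tilde u$ to a strong convergence, so the argument must avoid evaluating the varying bilinear forms at the varying states and instead exploit a fixed test vector, which is precisely what the minimization inequality tested at $\bar u$ delivers.
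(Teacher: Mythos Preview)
Your argument is correct and follows essentially the same route as the paper's proof: both exploit the minimization inequality $E_\delta[\fraka_n](u_n)\le E_\delta[\fraka_n](\bar u)$ tested at the fixed vector $\bar u=T^\delta(\bar\fraka)$, invoke \eqref{eq:E1} on $\frakB_\delta[\fraka_n](\bar u,\bar u)$, and conclude $\langle f,\bar u\rangle\le\liminf_n\langle f,u_n\rangle$, which combined with lower semicontinuity of $\phi$ finishes the direct method. The only difference is that you extract a weak limit $\tilde u$ of the states; this step is harmless but superfluous, since the inequality $\langle f,\bar u\rangle\le\liminf_n\langle f,u_n\rangle$ already follows directly from your displayed bound without ever naming $\tilde u$, and the paper simply omits it.
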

\begin{proof}
We begin by observing that, owing to the uniform estimate in Lemma~\ref{lem:StatesAreOKAbs} and the fact that $\phi$ is non-negative, the objective $J$ is bounded from below in $\calZ^\delta$. Let then $\{(\fraka_j,u_j)\}_{j=1}^\infty \subset \calZ^\delta$ be an ``infimizing'' sequence. The compactness of $\calH$ implies that we can extract a (non-relabeled) sub-sequence $\{\fraka_j\}_{j=1}^\infty$ and $\ofraka_\delta \in \calH$ such that $\fraka_j \overset{d}{\to} \ofraka_\delta$ as $j \uparrow \infty$. Let $\ou_\delta = T^\delta(\ofraka_\delta)$ so that, by construction, $(\ofraka_\delta, \ou_\delta) \in \calZ^\delta$. The goal shall be to show that this is a minimizer.

Owing to \eqref{eq:OptCondStateAbs} we have, for every $j \in \N$,
\[
  E_\delta[\fraka_j](u_j) = -\frac12 \frakB[\fraka_j](u_j,u_j).
\]
Then, since $u_j$ minimizes $E_\delta[\fraka_j]$, we have
\[
  -\frac12 \frakB[\fraka_j](u_j,u_j) \leq E_\delta[\fraka_j](\ou_\delta) = \frac12 \frakB_\delta[\fraka_j](\ou_\delta,\ou_\delta) - \langle f, \ou_\delta \rangle.
\]
Passing to the limit $j \uparrow \infty$, and using the strong continuity \eqref{eq:E1} we see that, since $\fraka_j \overset{d}{\to} \ofraka_\delta$,
\[
  \limsup_{j \to \infty} -\frac12 \frakB[\fraka_j](u_j,u_j) \leq \frac12 \frakB_\delta[{\ofraka_{\delta}}](\ou_\delta,\ou_\delta) - \langle f, \ou_\delta \rangle = -\frac12 \frakB_\delta[{\ofraka_\delta}](\ou_\delta,\ou_\delta),
\]
where we again used the optimality condition \eqref{eq:OptCondStateAbs}. Using now the state equation \eqref{eq:StateDeltaAbs} we infer that
\[
  \langle f , \ou_\delta \rangle \leq \liminf_{j \to \infty} \langle f , u_j \rangle.
\]
Next, the lower semicontinuity of $\phi$ implies
\[
  \phi(\ofraka_\delta) \leq \lim_{j \to \infty} \phi(\fraka_j).
\]
Finally, we combine the previous two inequalities to see that
\[
  J(\ofraka_\delta,\ou_\delta) \leq \lim_{j \to \infty} J(\fraka_j,u_j) = \inf\left\{ J(\fraka,u) \ \middle| \ (\fraka,u) \in \calZ^\delta \right\},
\]
and this completes the proof.
\end{proof}

\begin{remark}[lack of uniqueness]
Notice that we are not claiming any sort of uniqueness of solutions to \eqref{eq:OptDesignDeltaAbs}. This is due to the fact that $\calZ^\delta$ is not convex.
\end{remark}

\subsection{Variational convergence as $\delta \downarrow 0$}
\label{sub:DeltaToZero}

Let us now study the passage to the limit $\delta \downarrow 0$. Our goal is to show that if $\{(\ofraka_\delta,\ou_\delta) \in \calH \times X_\delta\}_{\delta \in (0,1)}$ is a family of solutions to problem \eqref{eq:OptDesignDeltaAbs} then, as $\delta \downarrow 0$, we may pass to the limit and obtain $(\ofraka_0,\ou_0)$, which is a solution to \eqref{eq:OptDesignDeltaAbs} for $\delta = 0$. The tool that we shall use for this is called $\Gamma$-convergence; see \cite{MR1968440,MR1201152}.

\begin{theorem}[$\Gamma$-convergence]
\label{thm:GconvAsDeltaTo0}
Under our structural assumptions we have that the reduced cost functionals $\{r^\delta\}_{\delta \in (0,1)}$, defined in \eqref{eq:ReducedCostAbs}, $\Gamma$-converge to $r^0$ with respect to the metric $d$. Moreover, this family is equicoercive.
\end{theorem}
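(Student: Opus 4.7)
The plan is to verify the two $\Gamma$-convergence inequalities separately; equicoercivity is immediate because $(\calH, d)$ is compact, so every sublevel set of each $r^\delta$ sits inside the compact set $\calH$ itself, which serves as a universal compact container.

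For the liminf inequality, assume $\fraka_\delta \overset{d}{\to} \fraka$ and set $u_\delta = T^\delta(\fraka_\delta)$. The uniform bound in Lemma~\ref{lem:StatesAreOKAbs} together with the asymptotic compactness \eqref{eq:S2} yields, along any subsequence achieving $\liminf_{\delta \downarrow 0} r^\delta(\fraka_\delta)$, a further subsequence along which $u_\delta \to \tilde u$ in $X_1$ with $\tilde u \in X_0$. Since $u_\delta$ need not lie in $X_0$, I would avoid taking limits of $\frakB_\delta[\fraka_\delta](u_\delta, u_\delta)$ directly. Instead, the optimality identity \eqref{eq:OptCondStateAbs} recasts the inequality $E_\delta[\fraka_\delta](u_\delta) \leq E_\delta[\fraka_\delta](w)$ (valid for every $w \in X_0 \subset X_\delta$) as $-\tfrac12 \langle f, u_\delta \rangle \leq \tfrac12 \frakB_\delta[\fraka_\delta](w,w) - \langle f, w \rangle$. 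The test function now lives in $X_0$, so \eqref{eq:2.22} and \eqref{eq:2.23} apply cleanly to give $\frakB_\delta[\fraka_\delta](w,w) \to \frakB_0[\fraka](w,w)$. Passing to the limit and then choosing $w = T^0(\fraka)$ produces $\langle f, \tilde u \rangle \geq \langle f, T^0(\fraka) \rangle$. Combined with the lower semicontinuity of $\phi$ under $d$-convergence, this yields $\liminf_{\delta \downarrow 0} r^\delta(\fraka_\delta) \geq r^0(\fraka)$.

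For the recovery sequence, I would take the constant choice $\fraka_\delta = \fraka$. Setting $u_\delta = T^\delta(\fraka)$ and $u_0^* := T^0(\fraka)$, the bound in Lemma~\ref{lem:StatesAreOKAbs} and \eqref{eq:S2} extract a subsequence with $u_\delta \to \tilde u$ in $X_1$, $\tilde u \in X_0$. Testing optimality against $u_0^* \in X_0 \subset X_\delta$ gives $E_\delta[\fraka](u_\delta) \leq E_\delta[\fraka](u_0^*) \to E_0[\fraka](u_0^*)$ via \eqref{eq:2.23}; meanwhile \eqref{eq:2.24} applied to the quadratic part, together with the strong convergence $\langle f, u_\delta \rangle \to \langle f, \tilde u \rangle$ in the linear part, yields $\liminf_{\delta \downarrow 0} E_\delta[\fraka](u_\delta) \geq E_0[\fraka](\tilde u)$. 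Hence $E_0[\fraka](\tilde u) \leq E_0[\fraka](u_0^*)$, and uniqueness of the minimizer of $E_0[\fraka]$ over $X_0$ forces $\tilde u = u_0^*$. A standard subsequence argument then promotes the convergence to the full sequence, so $\langle f, u_\delta \rangle \to \langle f, u_0^* \rangle$ and $r^\delta(\fraka) \to r^0(\fraka)$.

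The main obstacle is the liminf step: since \eqref{eq:2.22} only controls $\frakB_\delta[\fraka_\delta] - \frakB_\delta[\fraka]$ on $X_0 \times X_0$, one cannot close the joint limit directly when both the coefficient $\fraka_\delta$ and the state $u_\delta$ vary in $\delta$. The decisive trick is to absorb the nontrivial coefficient dependence into the optimality-based identity $E_\delta[\fraka_\delta](u_\delta) = -\tfrac12 \langle f, u_\delta \rangle$, thereby pairing the varying state $u_\delta$ only with the fixed datum $f$ (where convergence is straightforward from $u_\delta \to \tilde u$ in $X_1$), while the nontrivial evaluation of the bilinear form is moved onto a test function in $X_0$, precisely where the asymptotic continuity hypotheses bite.
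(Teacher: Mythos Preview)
Your proof is correct and follows essentially the same route as the paper: the optimality identity \eqref{eq:OptCondStateAbs} is used to push the varying state $u_\delta$ onto the linear term $\langle f, u_\delta\rangle$ while the bilinear form is evaluated only on fixed $X_0$ elements so that \eqref{eq:2.22}--\eqref{eq:2.23} apply; the constant recovery sequence is handled via \eqref{eq:2.24} and \eqref{eq:2.23}, with identification of the $X_1$-limit as $T^0(\fraka)$ by energy minimality. The only cosmetic difference is that the paper does not extract the auxiliary limit $\tilde u$ in the liminf step---the inequality $\langle f, T^0(\fraka)\rangle \leq \liminf_{\delta\downarrow0}\langle f, u_\delta\rangle$ drops out directly from your displayed estimate without passing to a subsequence---but your detour through $\tilde u$ is harmless.
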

\begin{proof}
Equicoercivity  of the family of functionals follows by definition from the assumption tha $\calH$ is compact. 
The sequence $\{r^\delta\}_{\delta \in (0,1)}$ $\Gamma$-converges to $r^0$ is equivalent to proving \emph{lim-inf and lim-sup inequalities}. We prove these inequalities separately:
\begin{itemize}
  \item \textbf{lim-inf inequality:} Assume that $\{\fraka_\delta \}_{\delta \in (0,1)} \subset \calH$ is such that there is $\fraka \in \calH$ for which, as $\delta \downarrow0$, $\fraka_\delta \overset{d}{\to} \fraka$. 
  We prove that  $r^{0} (\fraka) \leq \liminf_{\delta \downarrow 0} r^{\delta}(\fraka_\delta)$. Let $u_\delta = T^\delta (\fraka_\delta)$ and $u=T^0(\fraka)$. Owing to Lemma~\ref{lem:StatesAreOKAbs} we have
  \[
    E_\delta[\fraka_\delta](u_\delta) =- \frac12 \frakB_\delta[\fraka_\delta](u_\delta,u_\delta), \qquad E_0[\fraka](u) =- \frac12 \frakB_0[\fraka](u,u),
  \]
  and, since for all $\delta \in (0,1)$ $X_0 \subset X_\delta$, we conclude that
  \[
    \lim_{\delta \downarrow 0} \left( - \frac12 \frakB_\delta[\fraka_\delta](u_\delta,u_\delta) \right) \leq \lim_{\delta \downarrow 0} E_\delta[\fraka_\delta](u) = \frac12\lim_{\delta \downarrow 0} \frakB_\delta[a_\delta](u,u) - \langle f, u \rangle.
  \]
  We now combine \eqref{eq:2.22} and \eqref{eq:2.23} to conclude that
  \[
    \lim_{\delta \downarrow 0} \frakB_\delta[a_\delta](u,u) = \frakB_0[a](u,u).
  \]
  As a consequence, we have obtained that
  \[
    \langle f, u \rangle = \frakB_0[\fraka](u,u) \leq \lim_{\delta\downarrow0} \frakB_\delta[\fraka_\delta](u_\delta,u_\delta) = \liminf_{\delta \downarrow 0} \langle f, u_\delta \rangle,
  \]
  where we also used that $u_\delta = T^\delta (\fraka_\delta)$ and $u = T^0(\fraka)$, respectively. Next, the lower semicontinuity of $\phi$ immediately implies
  \[
    \phi(\fraka) \leq \liminf_{\delta \downarrow0}\phi(\fraka_\delta).
  \]
  Finally, the last two inequalities, together with the super-additivity of the limit inferior, readily imply that
  \[
    r^0(\fraka) \leq \liminf_{\delta \downarrow0} r^\delta(\fraka_\delta),
  \]
  as we intended to show.

  \item \textbf{Recovery sequence:} We use the constant recovery sequence. Let $\fraka \in \calH$, and we need to prove that
  \[
    \limsup_{\delta \downarrow0} r^\delta(\fraka) \leq r^0(\fraka).
  \]
  Set, for $\delta \in [0,1)$, $u_\delta = T^\delta(\fraka)$. Lemma~\ref{lem:StatesAreOKAbs} then shows that $\sup_{\delta \in (0,1)}\|u_\delta\|_{X_\delta} < \infty$ and, owing to \eqref{eq:S2}, there is $u \in X_0$ such that $u_\delta \to u$ in $X_1$. We now claim that $u=u_0$. To see this we must show that $E_0[\fraka](u) \leq E_0[\fraka](v)$ for all $v \in X_0$. Let $v \in X_0$ be arbitrary. Since, for any $\delta \in (0,1)$, we have $X_0 \hookrightarrow X_\delta$ we may conclude that
  \[
    E_\delta[\fraka](u_\delta) \leq E_\delta[\fraka](v).
  \]
  Use now \eqref{eq:2.24} and \eqref{eq:2.23}, respectively, on each side of this relation to see that
  \[
    E_0[\fraka](u) \leq \liminf_{\delta \downarrow0} E_\delta[\fraka](u_\delta) \leq \lim_{\delta \downarrow0} E_\delta[\fraka](v) = E_0[\fraka](v),
  \]
  and indeed $u=u_0$. From this we see that
  \[
    \lim_{\delta \downarrow0} r^\delta(\fraka) = \lim_{\delta \downarrow0} \left( \langle f, u_\delta \rangle +\phi(\fraka) \right) = \langle f,u_0 \rangle + \phi(\fraka) = \langle f,T^0(\fraka) \rangle + \phi(\fraka) = r^0(\fraka).
  \]
  Thus, the constant sequence is a recovery sequence.
\end{itemize}
\end{proof}

As a consequence of the previous result, we can prove convergence of minimizers (see \cite[Theorem 13.3]{MR3821514} and \cite[Corollary 7.20]{MR1201152}).

\begin{corollary}[convergence of minimizers]
\label{col:LimDelta0}
Let $\{\ofraka_\delta \}_{\delta \in (0,1)} \subset \calH$ be a family of optimal design coefficients, \ie for each $\delta > 0$,
\[
  \ofraka_\delta \in \argmin\left\{ r^\delta(\fraka) \ \middle| \ \fraka \in \calH \right\}.
\]
Assume that $\ofraka$ is a cluster point of $\{\ofraka_\delta\}_{\delta \in (0,1)}$, \ie $\ofraka$ is such that, up to sub-sequences, $\ofraka_\delta \overset{d}{\to} \ofraka$, as $\delta \downarrow0$. Then we have
\[
  \ofraka \in \argmin\left\{ r^0(\fraka) \ \middle| \ \fraka \in \calH \right\},
\]
and
\[
  \lim_{\delta\downarrow0} r^\delta(\ofraka_\delta) = r^0(\ofraka).
\]
Moreover, setting $\ou_\delta = T^\delta(\ofraka_\delta)$ and $\ou = T^0(\ofraka)$ we also obtain that
\begin{equation}
\label{eq:2.38}
  \lim_{\delta \downarrow0} \| \ou - \ou_\delta \|_{X_\delta} = 0.
\end{equation}
In addition $\ou_\delta \to \ou$ in $X_1$. Finally, $\phi(\ofraka_\delta) \to \phi(\ofraka)$ as $\delta \downarrow0$.
\end{corollary}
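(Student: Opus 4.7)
The plan is to reduce the corollary to three ingredients: the standard $\Gamma$-convergence machinery of Theorem~\ref{thm:GconvAsDeltaTo0}, a coercivity-plus-state-equation argument for the strong state convergence, and a decoupling step that separates the reduced cost into its two summands.

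Since $\calH$ is compact the family $\{r^\delta\}_{\delta \in (0,1)}$ is equicoercive; by Theorem~\ref{thm:GconvAsDeltaTo0} it $\Gamma$-converges to $r^0$, and the fundamental theorem of $\Gamma$-convergence (\cite[Theorem 13.3]{MR3821514}, \cite[Corollary 7.20]{MR1201152}) immediately yields $\ofraka \in \argmin r^0$ and $\lim_{\delta \downarrow 0} r^\delta(\ofraka_\delta) = r^0(\ofraka)$ for every cluster point $\ofraka$ of $\{\ofraka_\delta\}$. For the strong convergence \eqref{eq:2.38}, I would exploit the uniform coercivity \eqref{eq:E3}. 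Since $\ou \in X_0 \hookrightarrow X_\delta$, testing the state equation for $(\ofraka_\delta, \ou_\delta)$ against both $\ou_\delta$ and $\ou$ to eliminate the cross terms gives
\[
  \alpha \| \ou_\delta - \ou \|_{X_\delta}^2 \leq \frakB_\delta[\ofraka_\delta](\ou_\delta - \ou, \ou_\delta - \ou) = \langle f, \ou_\delta \rangle - 2 \langle f, \ou \rangle + \frakB_\delta[\ofraka_\delta](\ou,\ou).
\]
The parametric continuity assumptions \eqref{eq:2.22} and \eqref{eq:2.23}, combined with the state equation for $\ou$, send the last term to $\frakB_0[\ofraka](\ou,\ou) = \langle f, \ou \rangle$; once I have $\langle f, \ou_\delta \rangle \to \langle f, \ou \rangle$, the right-hand side collapses to zero and $X_1$-convergence then follows from \eqref{eq:S1}.

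The per-piece convergences must be extracted from the single convergence of the sum. Revisiting the $\liminf$ step in the proof of Theorem~\ref{thm:GconvAsDeltaTo0}, the two inequalities $\langle f, \ou \rangle \leq \liminf_{\delta \downarrow 0} \langle f, \ou_\delta \rangle$ and $\phi(\ofraka) \leq \liminf_{\delta \downarrow 0} \phi(\ofraka_\delta)$ are already available separately. Since the two right-hand sides sum to at most $\liminf r^\delta(\ofraka_\delta) = r^0(\ofraka) = \langle f, \ou \rangle + \phi(\ofraka)$, super-additivity of $\liminf$ forces equality in both; a matching comparison $\limsup a_\delta + \liminf b_\delta \leq \lim(a_\delta + b_\delta)$ using the known full limit of the sum then upgrades each $\liminf$ to a full limit, delivering both $\lim \langle f, \ou_\delta \rangle = \langle f, \ou \rangle$ (which closes the previous step) and $\phi(\ofraka_\delta) \to \phi(\ofraka)$.

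The main obstacle I anticipate is precisely this decoupling: $\Gamma$-convergence controls only the total cost, so recovering the two component convergences requires peering back into the proof of Theorem~\ref{thm:GconvAsDeltaTo0} to invoke the separate lower semicontinuity of each summand rather than treating the $\Gamma$-convergence theorem as a black box. Everything else is a routine application of coercivity, symmetry of the bilinear form, and the uniform embedding \eqref{eq:S1}.
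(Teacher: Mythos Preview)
Your proposal is correct and matches the paper's proof essentially step for step: both invoke Theorem~\ref{thm:GconvAsDeltaTo0} and the fundamental theorem of $\Gamma$-convergence for the first two claims, both establish \eqref{eq:2.38} by expanding $\frakB_\delta[\ofraka_\delta](\ou-\ou_\delta,\ou-\ou_\delta)$ via the state equation and passing to the limit with \eqref{eq:2.22}--\eqref{eq:2.23}, and both decouple $\langle f,\ou_\delta\rangle$ and $\phi(\ofraka_\delta)$ by combining the separate $\liminf$ inequalities from the proof of Theorem~\ref{thm:GconvAsDeltaTo0} with the already-known convergence of the sum. The only cosmetic difference is ordering: the paper first extracts $\langle f,\ou_\delta\rangle \to \langle f,\ou\rangle$ (via $\limsup\langle f,\ou_\delta\rangle \le r^0(\ofraka)-\phi(\ofraka)$) and then proves \eqref{eq:2.38}, whereas you set up the coercivity estimate first and then feed in the decoupling.
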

\begin{proof}
The existence of $\ofraka$ follows from the compactness of $\mathcal{H}$, while the convergence of  minima of $r^\delta$ is an immediate consequence of $\Gamma$-convergence and equicoercivity, see \cite[Theorem 13.3]{MR3821514} and \cite[Corollary 7.20]{MR1201152}. These properties were proved in Theorem~\ref{thm:GconvAsDeltaTo0}.

Next we prove the convergence of optimal states. Observe first that, owing to \eqref{eq:S1}, the convergence in $X_1$ will follow from \eqref{eq:2.38}. Next, the convergence of the reduced costs and the lower semicontinuity of $\phi$ imply
\[
  \limsup_{\delta \downarrow0 }\langle f, \ou_\delta \rangle = \limsup_{\delta \downarrow0 } \left[ r^\delta(\ofraka_\delta) - \phi(\ofraka_\delta) \right] \leq r^0(\ofraka) - \phi(\ofraka) = \langle f, \ou \rangle.
\]
We can also repeat the proof of the lim-inf inequality in Theorem~\ref{thm:GconvAsDeltaTo0} to obtain the reverse inequality and conclude that $\langle f, \ou_\delta \rangle \to \langle f, \ou \rangle$ as $\delta \downarrow0$. Notice that, in passing, we have shown the last claim, \ie
\[
  \phi(\ofraka_\delta) = r^\delta(\ofraka_\delta) - \langle f, \ou_\delta \rangle \to r^0(\ofraka_0) - \langle f, \ou \rangle = \phi(\ofraka).
\]

Let us now proceed with the proof of \eqref{eq:2.38}. Owing to the uniform coercivity \eqref{eq:E3} it suffices to show that
\[
  \lim_{\delta \downarrow0} \frakB_\delta[\ofraka_\delta](\ou - \ou_\delta,\ou - \ou_\delta ) = 0.
\]
We expand the bilinear form, use symmetry, and use the fact that $\ou_\delta$ solves the corresponding state equation to see that
\begin{align*}
  \frakB_\delta[\ofraka_\delta](\ou - \ou_\delta,\ou - \ou_\delta ) &= \frakB_\delta[\ofraka_\delta](\ou_\delta,\ou_\delta) -2\frakB_\delta[\ofraka_\delta](\ou_\delta,\ou) + \frakB_\delta[\ofraka_\delta](\ou,\ou) \\
    &= \langle f, \ou_\delta \rangle - 2 \langle f, \ou \rangle + \frakB_\delta[\ofraka_\delta](\ou,\ou).
\end{align*}
Using \eqref{eq:2.22} and \eqref{eq:2.23} we may pass to the limit and obtain the desired result.
\end{proof}

\subsection{Discretization}

We now introduce the discretization scheme for problem \eqref{eq:OptDesignDeltaAbs} via Galerkin-like techniques. To do so, fix $\delta \in [0,1)$ and assume that for every $h \in (0,h_0)$ we have at hand a finite dimensional $X_{\delta,h} \subset X_\delta$. In addition, for $h \in (0,h_0)$, $\calH_h$ is a closed subset of $\calH$.

The discrete optimal design problem is as follows. The discrete admissible set is
\[
  \calZ_h^\delta = \left\{ (\fraka_h,u_h) \in \calH_h \times X_{\delta,h} \ \middle| \ u_h \in \argmin_{w_h \in X_{\delta,h}} E_\delta[\fraka_h](w_h) \right\}.
\]
For each $\delta \in [0,1)$ and $h \in (0,h_0)$ we seek to find
\begin{equation}
\label{eq:DiscrOptDesignDeltaAbs}
  (\ofraka_{\delta,h}, \ou_{\delta,h}) \in \argmin\left\{ J(\fraka_h,u_h) \ \middle| \ (\fraka_h ,u_h) \in \calZ_h^\delta \right\}.
\end{equation}

\begin{remark}[variational vs.~full discretization]
We comment that, for reasons that will be clear once we deal with applications, we have chosen a full discretization of the design space, and not a variational one. The reader is referred to \cite{MR2122182} for an explanation of this terminology, to \cite{MR2977495,MR2997232,MR3759564} for examples of variational discretizations, and \cite{MR3207771,MR4196420} for full discretizations; mostly in the setting of coefficient identification for elliptic equations of second order.
\end{remark}

Without much effort we can show existence of discrete solutions. The following is the discrete analogue of Lemma~\ref{lem:StatesAreOKAbs}, and so we omit its proof.

\begin{lemma}[uniform well posedness]
\label{lem:DiscrStatesAreOKAbs}
For any $\delta \in [0,1]$, $h \in (0,h_0)$, and $\fraka \in \calH$ the energy $E_\delta[\fraka]$, defined in \eqref{eq:EnergyDeltaAbs} has a unique minimizer in $X_{\delta,h}$. If we denote this minimizer by $u_{\delta,h} \in X_{\delta,h}$ then we have the uniform bound
\[
  \| u_{\delta,h} \|_{X_\delta} \leq \frac{M}\alpha \| f \|_{X_1},
\]
and the characterizations
\[
  \frakB_\delta[\fraka](u_{\delta,h},v_h) = \langle f, v_h \rangle, \quad \forall v_h \in X_{\delta,h}, \qquad E_\delta[\fraka](u_{\delta,h}) = -\frac12 \frakB_\delta[\fraka](u_{\delta,h},u_{\delta,h}).
\]
\end{lemma}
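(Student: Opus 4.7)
The plan is to mirror, essentially verbatim, the proof of Lemma~\ref{lem:StatesAreOKAbs}, exploiting the fact that $X_{\delta,h} \subset X_\delta$ is a (closed, in fact finite-dimensional) subspace, so the restriction of $\frakB_\delta[\fraka]$ to $X_{\delta,h} \times X_{\delta,h}$ inherits boundedness and coercivity with the same constants $A$ and $\alpha$ from \eqref{eq:E2}--\eqref{eq:E3}. First I would note that the linear functional $v_h \mapsto \langle f, v_h\rangle$ is continuous on $X_{\delta,h}$ (with respect to the $X_\delta$ norm) by \eqref{eq:S1}, since $|\langle f, v_h\rangle| \le \|f\|_{X_1} \|v_h\|_{X_1} \le M\|f\|_{X_1}\|v_h\|_{X_\delta}$. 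Applying the Lax--Milgram lemma to the restricted problem on $X_{\delta,h}$ then yields a unique $u_{\delta,h} \in X_{\delta,h}$ satisfying the Galerkin equation
\[
\frakB_\delta[\fraka](u_{\delta,h},v_h) = \langle f, v_h\rangle, \qquad \forall v_h \in X_{\delta,h}.
\]

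Next, I would use the symmetry of $\frakB_\delta[\fraka]$ (it maps into $\calL_2^{\sym}(X_\delta)$) to identify this Galerkin equation as the first-order optimality condition for minimizing $E_\delta[\fraka]$ over the finite-dimensional (hence closed and weakly closed) subspace $X_{\delta,h}$; uniform coercivity \eqref{eq:E3} makes $E_\delta[\fraka]$ strictly convex and coercive on $X_{\delta,h}$, so $u_{\delta,h}$ is the unique minimizer.

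For the uniform bound, I would test the Galerkin equation against $v_h = u_{\delta,h}$, use \eqref{eq:E3} on the left, and the continuity estimate for $\langle f,\cdot\rangle$ noted above on the right,
\[
\alpha \|u_{\delta,h}\|_{X_\delta}^2 \le \frakB_\delta[\fraka](u_{\delta,h},u_{\delta,h}) = \langle f, u_{\delta,h}\rangle \le M\|f\|_{X_1}\|u_{\delta,h}\|_{X_\delta},
\]
and divide, yielding $\|u_{\delta,h}\|_{X_\delta} \le (M/\alpha)\|f\|_{X_1}$, independent of $h$ and $\delta$. Finally, to obtain the energy identity, I would substitute $v_h = u_{\delta,h}$ into the Galerkin equation to get $\frakB_\delta[\fraka](u_{\delta,h},u_{\delta,h}) = \langle f, u_{\delta,h}\rangle$, and plug this into the definition \eqref{eq:EnergyDeltaAbs} of $E_\delta[\fraka]$, which directly gives $E_\delta[\fraka](u_{\delta,h}) = -\tfrac12 \frakB_\delta[\fraka](u_{\delta,h},u_{\delta,h})$.

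There is no real obstacle here: the result is a purely finite-dimensional restatement of Lemma~\ref{lem:StatesAreOKAbs}, and the uniformity of the constants $A$, $\alpha$, $M$ across $\delta$ and $h$ is exactly what makes the bound hold independently of both parameters. The only modest point worth flagging explicitly (and which justifies the author's omission of the proof) is that passing to the subspace $X_{\delta,h}$ does not degrade the constants in \eqref{eq:E2}, \eqref{eq:E3}, and \eqref{eq:S1}, since each of these is stated as an inequality for all $v \in X_\delta$.
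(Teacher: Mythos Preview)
Your proposal is correct and matches the paper's intent exactly: the authors omit the proof, stating only that it is the discrete analogue of Lemma~\ref{lem:StatesAreOKAbs}, and your argument carries out precisely that repetition. The point you flag about the constants in \eqref{eq:S1}, \eqref{eq:E2}, \eqref{eq:E3} surviving restriction to $X_{\delta,h}\subset X_\delta$ is indeed the only thing to check, and it is immediate.
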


\begin{remark}[notation]
Let us denote by $T_h^\delta : \calH \to X_{\delta,h}$ the discrete solution mapping for the state equation. Owing to Lemma~\ref{lem:DiscrStatesAreOKAbs} this is well-defined. The discrete admissible set can then be equivalently written as
\[
  \calZ_h^\delta = \left\{ (\fraka_h,u_h) \in \calH_h \times X_{\delta,h} \ \middle| \ u_h = T_h^\delta(\fraka_h) \right\}.
\]
The discrete reduced cost $r_h^\delta : \calH_\delta \to \R$ is then
\[
  r_h^\delta(\fraka_h) = J(\fraka_h,T_h^\delta \fraka_h).
\]
\end{remark}

Notice that, although we only need it for $\fraka_h \in \calH_h$, Lemma~\ref{lem:DiscrStatesAreOKAbs} shows that the discrete state equations are well posed for all values of the model and discretization parameters. The following result is the discrete analogue of Theorem~\ref{thm:ExistenceAbs}. Notice that, once again, we do not claim uniqueness.

\begin{theorem}[existence]
\label{thm:DiscrExistenceAbs}
For every $\delta \in [0,1)$ and $h \in (0,h_0)$ problem \eqref{eq:DiscrOptDesignDeltaAbs} has a solution.
\end{theorem}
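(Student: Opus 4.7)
The plan is to follow the proof of Theorem~\ref{thm:ExistenceAbs} essentially verbatim, with $\calH$ replaced by $\calH_h$ and $X_\delta$ by $X_{\delta,h}$. The only substantive observation we need is that $\calH_h$, being by assumption a closed subset of the compact metric space $(\calH,d)$, is itself compact, so the direct method applies; nothing else really changes.

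First I would note that, by Lemma~\ref{lem:DiscrStatesAreOKAbs}, the map $T_h^\delta$ is well defined, so $\calZ_h^\delta$ equals the graph of $T_h^\delta$ restricted to $\calH_h$ and is, in particular, nonempty. Moreover $J$ is bounded below on $\calZ_h^\delta$: by the uniform estimate of Lemma~\ref{lem:DiscrStatesAreOKAbs} and \eqref{eq:S1}, $|\langle f, T_h^\delta(\fraka_h)\rangle| \leq (M^2/\alpha) \| f\|_{X_1}^2$, while $\phi \geq 0$. Hence an infimizing sequence $\{(\fraka_{h,j},u_{h,j})\}_{j=1}^\infty \subset \calZ_h^\delta$ exists, with $u_{h,j} = T_h^\delta(\fraka_{h,j})$.

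By compactness of $\calH_h$ I would extract a (not relabeled) subsequence with $\fraka_{h,j} \overset{d}{\to} \ofraka_h \in \calH_h$, and then set $\ou_h = T_h^\delta(\ofraka_h)$, so $(\ofraka_h, \ou_h) \in \calZ_h^\delta$ by construction. To show this candidate is optimal, I would argue exactly as in Theorem~\ref{thm:ExistenceAbs}. From the energy characterization in Lemma~\ref{lem:DiscrStatesAreOKAbs}, and using that $u_{h,j}$ minimizes $E_\delta[\fraka_{h,j}]$ over $X_{\delta,h}$ while $\ou_h \in X_{\delta,h}$, we have
\[
  -\tfrac12 \frakB_\delta[\fraka_{h,j}](u_{h,j},u_{h,j}) \leq E_\delta[\fraka_{h,j}](\ou_h) = \tfrac12 \frakB_\delta[\fraka_{h,j}](\ou_h,\ou_h) - \langle f, \ou_h \rangle.
\]
The strong continuity \eqref{eq:E1} applied to the fixed element $\ou_h$ yields, upon taking $\limsup$ as $j \uparrow \infty$,
\[
  \limsup_{j \to \infty}\left(-\tfrac12 \frakB_\delta[\fraka_{h,j}](u_{h,j},u_{h,j})\right) \leq \tfrac12 \frakB_\delta[\ofraka_h](\ou_h,\ou_h) - \langle f, \ou_h \rangle = -\tfrac12 \frakB_\delta[\ofraka_h](\ou_h,\ou_h),
\]
where the last equality uses the discrete state equation for $\ou_h$ tested with $\ou_h$. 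Finally, the discrete state equation for $u_{h,j}$ gives $\frakB_\delta[\fraka_{h,j}](u_{h,j},u_{h,j}) = \langle f, u_{h,j} \rangle$, whereas the analogous identity for $\ou_h$ yields $\frakB_\delta[\ofraka_h](\ou_h,\ou_h) = \langle f, \ou_h \rangle$, so the displayed inequality collapses to $\langle f,\ou_h\rangle \leq \liminf_{j\to\infty}\langle f,u_{h,j}\rangle$.

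Combining this with the lower semicontinuity of $\phi$, which gives $\phi(\ofraka_h) \leq \liminf_{j\to\infty}\phi(\fraka_{h,j})$, I would conclude
\[
  J(\ofraka_h,\ou_h) \leq \liminf_{j\to\infty} J(\fraka_{h,j},u_{h,j}) = \inf\{ J(\fraka_h,u_h) : (\fraka_h,u_h) \in \calZ_h^\delta\},
\]
so $(\ofraka_h,\ou_h)$ solves \eqref{eq:DiscrOptDesignDeltaAbs}. There is no real obstacle here; the proof is a transcription of Theorem~\ref{thm:ExistenceAbs}. The only point that must be checked carefully is that the hypothesis ``$\calH_h$ is closed in $\calH$'' (plus compactness of $\calH$) gives the needed compactness and, in particular, that the limit $\ofraka_h$ lies in $\calH_h$, so that the candidate $(\ofraka_h,\ou_h)$ is admissible for the discrete problem.
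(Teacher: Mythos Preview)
Your proposal is correct and follows exactly the approach the paper indicates: it repeats the proof of Theorem~\ref{thm:ExistenceAbs} using Lemma~\ref{lem:DiscrStatesAreOKAbs} and the fact that $\calH_h$ is a closed (hence compact) subset of $\calH$. The paper's own proof is a one-line reference to this strategy, and you have simply written out the details faithfully.
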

\begin{proof}
The proof repeats that of Theorem~\ref{thm:ExistenceAbs}, but we use Lemma~\ref{lem:DiscrStatesAreOKAbs} and the fact that, for each $h \in (0,h_0)$, the set $\calH_h$ is closed.
\end{proof}

\subsubsection{Assumptions on discretization}
\label{subsub:AssumptionsOnDiscrAbs}

Our next goals are to pass to the limits $h \downarrow0$ but $\delta \in [0,1)$ fixed; $\delta \downarrow0$ but $h \in (0,h_0)$ fixed; and, most importantly, both $h \downarrow0$ and $\delta \downarrow0$, \ie asymptotic compatibility. To be able to make these limit passages, we shall require the following assumptions on the discretization scheme.

\begin{itemize}
  \item \textbf{Discrete embedding:} Fix $h \in (0,h_0)$. Then we have
  \begin{equation}
  \label{eq:DiscrEmbedding}
    X_{\delta_1,h} \hookrightarrow X_{\delta_2,h}, \qquad \forall \delta_1,\delta_2 \in [0,1) \ : \ \delta_1 < \delta_2,
  \end{equation}
  with an embedding constant independent of $h$, $\delta_1$, and $\delta_2$.

  \item \textbf{Approximation property:} Fix $\delta \in [0,1)$. For every $w \in X_\delta$ there is $\{w_h \in X_{\delta,h} \}_{h \in (0,h_0)}$ that satisfies
  \begin{equation}
    \label{eq:C1}
    \lim_{h \downarrow0} \| w - w_h \|_{X_{\delta}} = 0.
  \end{equation}

  \item \textbf{Density:} For each $h \in (0,h_0)$ there is a mapping $\Pi_h : \calH \to \calH_h$, such that for every $\fraka \in \calH$, we have
  \begin{equation}
  \label{eq:PhiAndProj}
    \phi(\Pi_h \fraka ) \to \phi(\fraka), \qquad h \downarrow0.
  \end{equation}
\end{itemize}

Before stating the next assumption, we put it in context. Fix $\delta \in [0,1)$ and let $\fraka \in \calH$ be arbitrary. For $h \in (0,h_0)$ the \emph{Galerkin projection} is the mapping $\calG_{\delta,h}[\fraka]: X_\delta \to X_{\delta,h}$ defined, for every $v \in X_\delta$, via
\[
  \frakB_\delta[\fraka](v-\calG_{\delta,h}[\fraka]v,w_h) = 0, \qquad \forall w_h \in X_{\delta,h}.
\]
Owing to \eqref{eq:C1} it is not difficult to see that, for all $v \in X_\delta$, $\calG_{\delta,h}[\fraka]v \to v$ in $X_{\delta}$ as $h \downarrow0$. The next assumption states that a somewhat \emph{perturbed} version of the Galerkin projection is also convergent.

\begin{itemize}
  \item \textbf{Perturbed Galerkin projection:} Fix $\delta \in [0,1)$ and let $\frakb \in \calH$. For $h \in (0,h_0)$ define the mapping $\hat\calG_{\delta,h}[\frakb]: X_\delta \to X_{\delta,h}$, for every $v \in X_\delta$, via
  \[
    \frakB_\delta[\Pi_h \frakb](\hat\calG_{\delta,h}[\frakb]v, w_h) = \frakB_\delta[\frakb](v, w_h), \qquad \forall w_h \in X_{\delta,h}.
  \]
  Then, for all $(\frakb,v) \in \calZ^\delta$, we have
  \begin{equation}
  \label{eq:Assumption2.16}
    \| v-\hat\calG_{\delta,h}[\frakb] v \|_{X_\delta} \to 0, \qquad h \downarrow0.
  \end{equation}
\end{itemize}

\subsubsection{Convergence as $h \downarrow0$}
\label{subsub:hToZero}

We are now ready to pass to the limit in the discretization parameter, \ie $h \downarrow 0$, while keeping the model parameter $\delta \in [0,1)$ fixed. The proof strategy in the following result loosely follows that used in \cite[Theorem 3.2]{MR2977495}.

\begin{theorem}[convergence as $h \downarrow0$]
\label{them:LimHtoZeroAbs}
Fix $\delta \in [0,1)$ and assume that the family of pairs $\{(\ofraka_h,\ou_h)\}_{h \in (0,h_0)}$ solves problem \eqref{eq:DiscrOptDesignDeltaAbs}. There is $(\ofraka,\ou) \in \calZ^\delta$ such that, as $h \downarrow0$, and up to sub-sequences, the following hold:
\begin{enumerate}
  \item $\ofraka_h \overset{d}{\to} \ofraka$.

  \item The pair $(\ofraka,\ou)$ solves problem \eqref{eq:OptDesignDeltaAbs}.

  \item $J(\ofraka_h,\ou_h) \to J(\ofraka,\ou)$.

  \item $\ou_h \to \ou$ in $X_\delta$.

  \item $\phi(\ofraka_h) \to \phi(\ofraka)$.
\end{enumerate}
\end{theorem}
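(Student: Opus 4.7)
The plan is to first extract a subsequential limit for the designs, define a candidate state through the continuous solution operator, and then establish optimality by a comparison argument against an arbitrary competitor. Since $(\calH,d)$ is compact, from $\{\ofraka_h\}_{h \in (0,h_0)}$ we extract a (non-relabeled) sub-sequence such that $\ofraka_h \overset{d}{\to} \ofraka \in \calH$, which gives item 1. Set $\ou = T^\delta(\ofraka) \in X_\delta$; then $(\ofraka,\ou) \in \calZ^\delta$ by construction and what remains is items 2--5.

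The heart of the argument is item 2. Let $(\frakb,v) \in \calZ^\delta$ be arbitrary; the goal is $J(\ofraka,\ou) \leq J(\frakb,v)$. Discrete optimality yields the comparison $r_h^\delta(\ofraka_h) \leq r_h^\delta(\Pi_h\frakb)$. For the right-hand side, the key observation is that $T_h^\delta(\Pi_h\frakb)$ is characterized by
\[
  \frakB_\delta[\Pi_h\frakb]\bigl(T_h^\delta(\Pi_h\frakb),w_h\bigr) = \langle f,w_h\rangle = \frakB_\delta[\frakb](v,w_h), \qquad \forall w_h \in X_{\delta,h},
\]
since $v = T^\delta(\frakb)$. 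This is precisely the defining relation of $\hat\calG_{\delta,h}[\frakb]v$, so $T_h^\delta(\Pi_h\frakb) = \hat\calG_{\delta,h}[\frakb]v$, and \eqref{eq:Assumption2.16} together with \eqref{eq:S1} and \eqref{eq:PhiAndProj} give $r_h^\delta(\Pi_h\frakb) \to J(\frakb,v)$. For the left-hand side I would show $\liminf_{h\downarrow 0} r_h^\delta(\ofraka_h) \geq J(\ofraka,\ou)$. The lower semicontinuity of $\phi$ treats its contribution; for the compliance part $\langle f,\ou_h\rangle = \frakB_\delta[\ofraka_h](\ou_h,\ou_h)$ I would combine the discrete minimality $E_\delta[\ofraka_h](\ou_h) \leq E_\delta[\ofraka_h](w_h)$ with an approximation $w_h \to \ou$ in $X_\delta$ provided by \eqref{eq:C1}, and then use strong continuity \eqref{eq:E1} (extended to the bilinear slot by polarization) together with the uniform bound \eqref{eq:E2} to pass to the limit, concluding $\liminf \frakB_\delta[\ofraka_h](\ou_h,\ou_h) \geq \frakB_\delta[\ofraka](\ou,\ou) = \langle f,\ou\rangle$. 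The arbitrariness of $(\frakb,v)$ gives item 2, and taking $(\frakb,v)=(\ofraka,\ou)$ yields item 3.

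Items 4 and 5 then follow by a squeeze. For item 5, the identity $J(\ofraka_h,\ou_h) = \langle f,\ou_h\rangle + \phi(\ofraka_h)$, convergence of the sum from item 3, and the two independent lower-semicontinuity bounds force each summand to converge to its limit. For item 4 I would invoke the uniform coercivity \eqref{eq:E3} and expand
\[
  \alpha \| \ou_h - \ou \|_{X_\delta}^2 \leq \frakB_\delta[\ofraka_h](\ou_h,\ou_h) - 2\frakB_\delta[\ofraka_h](\ou_h,\ou) + \frakB_\delta[\ofraka_h](\ou,\ou).
\]
The first and third terms tend to $\langle f,\ou\rangle$ by what precedes and by \eqref{eq:E1}, respectively. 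For the cross term I would pick $w_h \to \ou$ in $X_\delta$ from \eqref{eq:C1} and split $\frakB_\delta[\ofraka_h](\ou_h,\ou) = \frakB_\delta[\ofraka_h](\ou_h,w_h) + \frakB_\delta[\ofraka_h](\ou_h,\ou - w_h)$: the discrete state equation handles the first piece, while \eqref{eq:E2} and the uniform $X_\delta$-bound on $\ou_h$ from Lemma~\ref{lem:DiscrStatesAreOKAbs} handle the second, giving convergence to $\langle f,\ou\rangle$ as well. The whole expression vanishes in the limit.

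The main technical obstacle I anticipate is the absence of a joint continuity statement for $\frakB_\delta[\fraka_h](u_h,\cdot)$ when both the coefficient and the first argument vary; the uniform device used throughout is to displace the non-varying second argument into $X_{\delta,h}$ via \eqref{eq:C1} and then leverage the state equation for $u_h$, with the resulting residual controlled by the uniform boundedness \eqref{eq:E2}. This is also precisely the trick behind the identification $T_h^\delta(\Pi_h\frakb) = \hat\calG_{\delta,h}[\frakb]v$, which is what makes the perturbed Galerkin hypothesis \eqref{eq:Assumption2.16} the right assumption to impose.
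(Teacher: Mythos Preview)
Your proof is correct and follows essentially the same approach as the paper: compactness for the designs, the energy-minimality inequality against an approximant of $\ou$ to get the liminf for the compliance, the identification $T_h^\delta(\Pi_h\frakb)=\hat\calG_{\delta,h}[\frakb]v$ together with \eqref{eq:Assumption2.16} for the limsup against competitors, and a coercivity expansion for the state convergence. The only cosmetic differences are that you use a generic approximant $w_h\to\ou$ from \eqref{eq:C1} where the paper uses the (perturbed) Galerkin projection, and that you establish item~5 first via the squeeze on the two lower-semicontinuous summands of $J$, which slightly streamlines the proof of item~4 compared to the paper's version where the $\phi$-terms are carried along in the item~4 computation.
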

\begin{proof}
We prove each statement in order of appearance.
\begin{enumerate}
  \item Since $\calH$ is compact, we can extract a (not relabeled) sub-sequence and $\ofraka \in \calH$, such that $\ofraka_h \overset{d}{\to} \ofraka$. Define $\ou = T^\delta(\ofraka)$ so that $(\ofraka,\ou) \in \calZ^\delta$.

  \item Since $(\ofraka_h,\ou_h) \in \calZ_h^\delta$ and $(\ofraka,\ou) \in \calZ^\delta$, we have
  \[
    E_\delta[\ofraka_h](\ou_h) = -\frac12 \frakB_\delta[\ofraka_h](\ou_h,\ou_h), \qquad E_\delta[\ofraka](\ou) = -\frac12 \frakB_\delta[\ofraka](\ou,\ou).
  \]
  \begin{multline*}
    -\frac12 \frakB_\delta[\ofraka_h](\ou_h,\ou_h) = E_\delta[\ofraka_h](\ou_h) \leq E_\delta[\ofraka_h](\calG_{\delta,h}[\ofraka]\ou) \\
      = \frac12 \frakB_\delta[\ofraka_h](\calG_{\delta,h}[\ofraka]\ou,\calG_{\delta,h}[\ofraka]\ou) - \langle f, \calG_{\delta,h}[\ofraka]\ou \rangle.
  \end{multline*}
  Observe now that, using \eqref{eq:E2}, \eqref{eq:E3} with the definition of $\calG_{\delta,h}[\ofraka]$, and the estimate in Lemma~\ref{lem:StatesAreOKAbs}, we get
  \begin{align*}
    \frakB_\delta[\ofraka_h](\calG_{\delta,h}[\ofraka]\ou,\calG_{\delta,h}[\ofraka]\ou) &= \frakB_\delta[\ofraka_h](\ou,\ou) + \frakB_\delta[\ofraka_h](\ou-\calG_{\delta,h}[\ofraka]\ou,\ou-\calG_{\delta,h}[\ofraka]\ou) \\
      &- 2 \frakB_\delta[\ofraka_h](\calG_{\delta,h}[\ofraka]\ou, \ou - \calG_{\delta,h}[\ofraka]\ou ) \\
    &\leq \frakB_\delta[\ofraka_h](\ou,\ou) + A \| \ou-\calG_{\delta,h}[\ofraka]\ou \|_{X_\delta}^2 \\
      &+ \frac{AM}{\alpha^2} \| f \|_{X_1} \| \ou-\calG_{\delta,h}[\ofraka]\ou \|_{X_\delta} \\
      &\to \frakB_\delta[\ofraka](\ou,\ou), \qquad h \downarrow0,
  \end{align*}
  where, to pass to the limit, we invoked \eqref{eq:E1}. Consequently,
  \[
    -\frac12 \limsup_{h \downarrow0}\frakB_\delta[\ofraka_h](\ou_h,\ou_h) \leq \frac12 \frakB_\delta[\ofraka](\ou,\ou) - \langle f, \ou \rangle = E_\delta[\ofraka](\ou) = -\frac12 \frakB_\delta[\ofraka](\ou,\ou),
  \]
  \ie
  \[
    \langle f, \ou \rangle = \frakB_\delta[\ofraka](\ou,\ou) \leq \liminf_{h \downarrow0} \frakB_\delta[\ofraka_h](\ou_h,\ou_h) = \liminf_{h \downarrow0} \langle f, \ou_h \rangle.
  \]
  This, combined with the lower semicontinuity of $\phi$ readily implies
  \begin{equation}
  \label{eq:2.60}
    J(\ofraka,\ou) \leq \liminf_{h \downarrow0}J(\ofraka_h,\ou_h).
  \end{equation}

  Let now $(\fraka,u) \in \calZ^\delta$ be arbitrary. We must show that $J(\ofraka,\ou) \leq J(\fraka,u)$. With this intention, note that $(\Pi_h \fraka, \hat\calG_{\delta,h}[\fraka]u) \in \calZ_h^\delta$ and thus we must have
  \[
    J(\ofraka_h,\ou_h) \leq J(\Pi_h \fraka, \hat\calG_{\delta,h}[\fraka]u).
  \]
  We now use assumptions \eqref{eq:PhiAndProj} and \eqref{eq:Assumption2.16} to pass to the limit and obtain
  \begin{multline*}
    J(\ofraka,\ou) \leq \liminf_{h \downarrow0}J(\ofraka_h,\ou_h) \leq \lim_{h \downarrow0}J(\Pi_h\fraka,\hat\calG_{\delta,h}[\fraka]u) \\
      = \lim_{h\downarrow0} \langle f, \hat\calG_{\delta,h}[\fraka] u \rangle + \lim_{h \downarrow0} \phi(\Pi_h \fraka) = \langle f, u \rangle + \phi(\fraka) = J(\fraka,u),
  \end{multline*}
  and the pair $(\ofraka,\ou)$ solves \eqref{eq:OptDesignDeltaAbs}.

  \item Notice that \eqref{eq:2.60} has already proven half of the assertion. The reverse inequality easily follows by recalling that $(\Pi_h \ofraka, \hat\calG_{\delta,h}[\ofraka]\ou) \in \calZ_h^\delta$ so that
  \[
    J(\ofraka_h,\ou_h) \leq J(\Pi_h \ofraka, \hat\calG_{\delta,h}[\ofraka]\ou).
  \]
  We pass to the limit by using \eqref{eq:PhiAndProj} and \eqref{eq:Assumption2.16} to see that, as we needed,
  \[
    \limsup_{h \downarrow0} J(\ofraka_h,\ou_h) \leq J(\ofraka,\ou).
  \]

  \item We first observe that, using \eqref{eq:E2}, the estimate of Lemma~\ref{lem:DiscrStatesAreOKAbs}, and \eqref{eq:Assumption2.16} we have
  \begin{align*}
    \left| \frakB_\delta[\ofraka_h](\ou_h, \ou - \hat\calG_{\delta,h}[\ofraka]\ou ) \right| &\leq A \| \ou - \hat\calG_{\delta,h}[\ofraka]\ou \|_{X_\delta} \| \ou_h \|_{X_\delta} \\
    &\leq \frac{MA}\alpha \| f \|_{X_1} \| \ou - \hat\calG_{\delta,h}[\ofraka]\ou \|_{X_\delta} \to 0
  \end{align*}
  as $h \downarrow0$. With this at hand, and using \eqref{eq:E3}, we estimate
  \begin{align*}
    \alpha \| \ou - \ou_h \|_{X_\delta}^2 &\leq \frakB_\delta[\ofraka_h](\ou - \ou_h,\ou - \ou_h )\\
    & = \langle f, \ou_h \rangle - 2 \frakB_\delta[\ofraka_h](\ou,\ou_h) + \frakB_\delta[\ofraka_h](\ou,\ou) \\
      &=\langle f, \ou_h \rangle - 2 \frakB_\delta[\ofraka_h](\ou - \hat\calG_{\delta,h}[\ofraka]\ou,\ou_h)+ \frakB_\delta[\ofraka_h](\ou,\ou) \\ &-2\frakB_\delta[\ofraka_h](\ou_h,\hat\calG_{\delta,h}[\ofraka]\ou ) \\
      &= \langle f, \ou_h -2\hat\calG_{\delta,h}[\ofraka]\ou \rangle - 2 \frakB_\delta[\ofraka_h](\ou - \hat\calG_{\delta,h}[\ofraka]\ou,\ou_h)+ \frakB_\delta[\ofraka_h](\ou,\ou),
  \end{align*}
  where we used that $(\ofraka_h,\ou_h) \in \calZ_h^\delta$, and that $\hat\calG_{\delta,h}[\ofraka]\ou \in X_{\delta,h}$. Using \eqref{eq:ObjectiveAbs}, which defines our cost, we rewrite
  \begin{align*}
    \alpha \| \ou - \ou_h \|_{X_\delta}^2 +\phi(\ofraka_h) -\phi(\ofraka) &\leq 
      J(\ofraka_h,\ou_h) -J(\ofraka,\ou) + \langle f, \ou -2\hat\calG_{\delta,h}[\ofraka]\ou \rangle\\
      & \,\,\,- 2 \frakB_\delta[\ofraka_h](\ou - \hat\calG_{\delta,h}[\ofraka]\ou,\ou_h)+ \frakB_\delta[\ofraka_h](\ou,\ou).
  \end{align*}
  We can now pass to the limit, use the previous step, \eqref{eq:Assumption2.16}, our first observation, and \eqref{eq:E1} to see that, since $(\ofraka,\ou) \in \calZ^\delta$,
  \[
    \limsup_{h \downarrow0} \left( \alpha \| \ou - \ou_h \|_{X_\delta}^2 +\phi(\ofraka_h) -\phi(\ofraka) \right) \leq - \langle f, \ou \rangle + \frakB_\delta[\ofraka](\ou,\ou) = 0.
  \]
  Arguing along, but not relabeling, the sub-sequence where the limit superior is attained, we have that
  \[
    \alpha \lim_{h\downarrow0} \| \ou - \ou_h \|_{X_\delta}^2 \leq \lim_{h\downarrow0}\left( \alpha \| \ou - \ou_h \|_{X_\delta}^2 +\phi(\ofraka_h) -\phi(\ofraka) \right) =0,
  \]
  because the lower semicontinuity of $\phi$ and the first point imply that $\phi(\ofraka) \leq \lim_{h\downarrow0}\phi(\ofraka_h)$.

  \item From the previous step we see that
  $
    \lim_{h\downarrow0}\phi(\ofraka_h) \leq \phi(\ofraka),
  $
  which combined with lower semicontinuity gives the assertion.
\end{enumerate}
All assertions have been proved, and all convergences shown. The theorem is now proved.
\end{proof}

\subsubsection{Variational convergence as $\delta\downarrow0$}
\label{subsub:GammaConvergenceDiscrAbs}

The passage to the limit $\delta \downarrow0$ poses no difficulty.

\begin{theorem}[$\Gamma$--convergence for $h \in (0,h_0)$]
\label{thm:GconvAsDeltaTo0Discr}
Fix $h \in (0,h_0)$. The family of discrete reduced cost functionals $\{r^\delta_h\}_{\delta \in (0,1)}$ $\Gamma$-converge to $r_h^0$. Moreover, this family is equicoercive.
\end{theorem}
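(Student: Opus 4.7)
The plan is to replay the proof of Theorem~\ref{thm:GconvAsDeltaTo0} at the discrete level, with $X_{0,h}$ and $X_{\delta,h}$ playing the roles of $X_0$ and $X_\delta$, and the discrete embedding \eqref{eq:DiscrEmbedding} substituted for $X_0 \hookrightarrow X_\delta$ at every point where the continuous argument uses it. Equicoercivity of $\{r_h^\delta\}_{\delta \in (0,1)}$ is immediate: each $r_h^\delta$ is defined on the closed subset $\calH_h$ of the compact metric space $(\calH,d)$, so every sublevel set is automatically contained in a compact set. The nontrivial content is the pair of lim-inf and lim-sup inequalities.

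For the lim-inf, I take $\fraka_\delta \in \calH_h$ with $\fraka_\delta \overset{d}{\to} \fraka$, set $u_{\delta,h} = T_h^\delta(\fraka_\delta)$ and $u_h = T_h^0(\fraka)$, and use $u_h \in X_{0,h} \hookrightarrow X_{\delta,h}$ as a competitor in the discrete minimization defining $u_{\delta,h}$:
\[
  -\tfrac12 \langle f, u_{\delta,h}\rangle = E_\delta[\fraka_\delta](u_{\delta,h}) \leq E_\delta[\fraka_\delta](u_h) = \tfrac12 \frakB_\delta[\fraka_\delta](u_h, u_h) - \langle f, u_h\rangle,
\]
where Lemma~\ref{lem:DiscrStatesAreOKAbs} supplies the left identity. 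Since $u_h \in X_0$, applying \eqref{eq:2.22} and \eqref{eq:2.23} drives the right side to $-\tfrac12 \langle f, u_h\rangle$, so $\liminf_{\delta \downarrow 0} \langle f, u_{\delta,h}\rangle \geq \langle f, u_h\rangle$; combined with the lower semicontinuity of $\phi$, this delivers $r_h^0(\fraka) \leq \liminf_{\delta \downarrow 0} r_h^\delta(\fraka_\delta)$.

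For the recovery sequence I take the constant $\fraka_\delta \equiv \fraka$ and set $u_{\delta,h} = T_h^\delta(\fraka)$. Lemma~\ref{lem:DiscrStatesAreOKAbs} together with \eqref{eq:S1} bounds $\{u_{\delta,h}\}$ uniformly in $X_1$, and the nesting \eqref{eq:DiscrEmbedding} confines the whole family to the fixed finite-dimensional ambient space $X_{\delta_0,h}$ for any $\delta_0 \in (0,1)$ with $\delta < \delta_0$, so a subsequence converges strongly in $X_1$ to some $u^\star$. Taking $v_h \in X_{0,h} \subset X_{\delta,h}$ as a test element in $E_\delta[\fraka](u_{\delta,h}) \leq E_\delta[\fraka](v_h)$ and passing to the limit via \eqref{eq:2.24} on the left and \eqref{eq:2.23} on the right yields $E_0[\fraka](u^\star) \leq E_0[\fraka](v_h)$ for every $v_h \in X_{0,h}$, and the uniqueness provided by Lemma~\ref{lem:DiscrStatesAreOKAbs} then forces $u^\star = u_h$, which combined with $\langle f, u_{\delta,h}\rangle \to \langle f, u^\star\rangle$ gives $r_h^\delta(\fraka) \to r_h^0(\fraka)$. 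The main obstacle, and where I expect to spend most of the care, is the identification $u^\star = u_h$: a priori $u^\star$ only lives in $X_{\delta_0,h} \cap X_0$, not necessarily in $X_{0,h}$, so the uniqueness step requires either a mild compatibility such as $X_0 \cap X_{\delta_0,h} \subset X_{0,h}$---automatic in the concrete settings of Sections~\ref{sec:Conductivity}--\ref{sec:Peridynamics}, where one can take $X_{\delta,h} = X_{0,h}$ as sets---or a direct Euler--Lagrange limit of the state equation on $X_{0,h}$ test functions.
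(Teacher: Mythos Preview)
Your proposal is correct and follows exactly the approach the paper intends: its entire proof reads ``This repeats, without any substantial change, the proof of Theorem~\ref{thm:GconvAsDeltaTo0}.'' You have in fact been more careful than the paper, correctly isolating the one place where the discrete argument is not a verbatim transcription---namely that the limit $u^\star$ of $\{u_{\delta,h}\}$ must land in $X_{0,h}$ for the uniqueness step to apply---and observing that this is automatic in the applications of Sections~\ref{sec:Conductivity}--\ref{sec:Peridynamics}, where $X_{\delta,h}=V_h$ as sets for every $\delta$. (Your proposed alternative via an Euler--Lagrange limit on $X_{0,h}$ test functions would still require $u^\star\in X_{0,h}$ to identify $u^\star$ with $T_h^0(\fraka)$, so the set-compatibility observation is really the operative resolution.)
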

\begin{proof}
This repeats, without any substantial change, the proof of Theorem~\ref{thm:GconvAsDeltaTo0}.
\end{proof}

Once again, the previous result implies convergence of discrete minimizers.

\begin{corollary}[convergence of discrete minimizers]
\label{col:LimDelta0h>0}
Fix $h \in (0,h_0)$. Given a family of optimal design coefficients, \ie $\{\ofraka_{\delta,h} \}_{\delta \in (0,1)} \subset \calH_h$ satisfying
\[
  \ofraka_{\delta,h} \in \argmin_{\fraka_h \in \calH_h} r_h^\delta(\fraka_h),
\]
there is $\ofraka_h \in \argmin_{\fraka_h \in \calH_h} r_h^0(\fraka_h)$ such that, up to sub-sequences, $\ofraka_{\delta,h} \overset{d}{\to} \ofraka_h$, as $\delta \downarrow0$, and that 
\[
  \lim_{\delta\downarrow0} r_h^\delta(\ofraka_{\delta,h}) = r_h^0(\ofraka_h).
\]
Moreover, setting $\ou_{\delta,h} = T_h^\delta(\ofraka_{\delta,h})$ and $\ou_h = T_h^0(\ofraka_h)$ we also obtain that
\begin{equation}
  \lim_{\delta \downarrow0} \| \ou_h - \ou_{\delta,h} \|_{X_\delta} = 0.
\end{equation}
In addition, as $\delta \downarrow0$, $\ou_{\delta,h} \to \ou_h$ in $X_1$ and  $\phi(\ofraka_{\delta,h}) \to \phi(\ofraka_h)$.
\end{corollary}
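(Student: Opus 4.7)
The plan is to mimic the proof of Corollary~\ref{col:LimDelta0} essentially verbatim, with the design space $\calH$ replaced by its closed (hence compact) subset $\calH_h$, and the continuous state spaces $X_\delta$, $X_0$ replaced by their finite-dimensional counterparts $X_{\delta,h}$, $X_{0,h}$. First, the existence of a cluster point $\ofraka_h \in \calH_h$ of $\{\ofraka_{\delta,h}\}_{\delta \in (0,1)}$ is immediate from compactness of $\calH_h$. By the $\Gamma$-convergence and equicoercivity established in Theorem~\ref{thm:GconvAsDeltaTo0Discr}, the fundamental theorem of $\Gamma$-convergence (\cite[Theorem 13.3]{MR3821514}, \cite[Corollary 7.20]{MR1201152}) yields both that $\ofraka_h$ minimizes $r_h^0$ and that $r_h^\delta(\ofraka_{\delta,h}) \to r_h^0(\ofraka_h)$.

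For the convergence of discrete optimal states, I would follow the endgame of the proof of Corollary~\ref{col:LimDelta0}. Using uniform coercivity \eqref{eq:E3} we have
\begin{equation*}
  \alpha \|\ou_h - \ou_{\delta,h}\|_{X_\delta}^2 \le \frakB_\delta[\ofraka_{\delta,h}](\ou_h - \ou_{\delta,h}, \ou_h - \ou_{\delta,h}).
\end{equation*}
The critical observation is that the discrete embedding \eqref{eq:DiscrEmbedding} gives $\ou_h \in X_{0,h} \subset X_{\delta,h}$, so $\ou_h$ is a legitimate test function in the discrete state equation (Lemma~\ref{lem:DiscrStatesAreOKAbs}) solved by $\ou_{\delta,h}$. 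Expanding by bilinearity and symmetry, and invoking both the state equation for $\ou_{\delta,h}$ and the admissibility of $\ou_h$, yields
\begin{equation*}
  \alpha \|\ou_h - \ou_{\delta,h}\|_{X_\delta}^2 \le \frakB_\delta[\ofraka_{\delta,h}](\ou_h,\ou_h) - 2\langle f, \ou_h \rangle + \langle f, \ou_{\delta,h}\rangle.
\end{equation*}

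To pass to the limit $\delta \downarrow 0$, I would first note that $\ou_h \in X_{0,h} \subset X_0$, so assumptions \eqref{eq:2.22}--\eqref{eq:2.23} yield $\frakB_\delta[\ofraka_{\delta,h}](\ou_h,\ou_h) \to \frakB_0[\ofraka_h](\ou_h,\ou_h) = \langle f, \ou_h \rangle$, where the final identity comes from $\ou_h = T_h^0(\ofraka_h)$ tested against itself. The convergence $\langle f, \ou_{\delta,h} \rangle \to \langle f, \ou_h \rangle$ can be obtained by repeating the lim-inf computation of Theorem~\ref{thm:GconvAsDeltaTo0} (which gives $\langle f, \ou_h\rangle \le \liminf \langle f, \ou_{\delta,h}\rangle$) and combining it with the already-established convergence of reduced costs together with lower semicontinuity of $\phi$ (which supplies the matching lim-sup). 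Substituting back into the coercivity estimate delivers the $X_\delta$-convergence; the $X_1$-convergence is then immediate from \eqref{eq:S1}, and the identity $\phi(\ofraka_{\delta,h}) = r_h^\delta(\ofraka_{\delta,h}) - \langle f, \ou_{\delta,h}\rangle$ gives the convergence of the regularization term.

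I anticipate no serious obstacle here: the argument is essentially bookkeeping that transports the continuous Corollary~\ref{col:LimDelta0} to the discrete setting. The single subtle point is ensuring that the test function $\ou_h$ remains admissible in the $\delta$-discrete state equation, which is precisely guaranteed by the uniform discrete embedding assumption~\eqref{eq:DiscrEmbedding}.
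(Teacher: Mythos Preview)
Your proposal is correct and follows exactly the approach the paper takes: the paper's proof consists of the single sentence ``The proof repeats that of Corollary~\ref{col:LimDelta0},'' and you have carried out precisely that repetition, correctly noting along the way that the discrete embedding~\eqref{eq:DiscrEmbedding} is what makes $\ou_h \in X_{0,h} \subset X_{\delta,h}$ an admissible test function in the discrete state equation.
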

\begin{proof}
The proof repeats that of Corollary~\ref{col:LimDelta0}.
\end{proof}

\subsection{Asymptotic compatibility}
\label{sub:ACAbs}

\begin{figure}
\label{fig:AC}
  \begin{center}
    \begin{tikzcd}
        ({ \ofraka_{\delta, h} }, { \ou_{\delta,h} })
          \arrow[d, harpoon, "h \downarrow 0"]
          \arrow[d, shift right=1.5]
          \arrow[dr, "k \uparrow \infty", harpoon]
          \arrow[dr, shift right=1.5]
          \arrow[r, "\delta \downarrow 0", harpoon, shift left=1.5]
          \arrow[r]
      &
        ({ \ofraka_h }, { \ou_h })
          \arrow[d, harpoon, "h \downarrow 0"]
          \arrow[d, shift right=1.5]
      \\
        ({ \ofraka_\delta }, { \ou_\delta  })
          \arrow[r, harpoon, "\delta \downarrow 0"]
          \arrow[r, shift right=1.5]
      &
        ({ \ofraka }, { \ou })
    \end{tikzcd}
  \end{center}
  \caption{Commutative diagram depicting the notion of asymptotic compatibility for optimal design problems; see Definition~\ref{def:ACAbs}. The symbol $\overset{\rightharpoonup}{\to}$ denotes that, along the corresponding sequence $\tau$, we have convergence of $\phi(\ofraka_\tau)$ and $\ou_\tau$ in $X_1$.}
\end{figure}
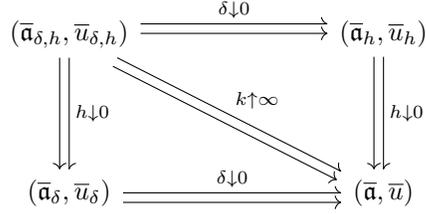

We now are finally ready to study the asymptotic compatibility of our class of optimal design problems. We begin with its definition, which is graphically illustrated in Figure~\ref{fig:AC}.

\begin{definition}[asymptotic compatibility]
\label{def:ACAbs}
Let $\{(\ofraka_{\delta,h},\ou_{\delta,h})\}_{\delta \in (0,1),h \in (0,h_0)}$ be a family of solutions to \eqref{eq:DiscrOptDesignDeltaAbs}. We say that this family is asymptotically compatible with respect to discretization and modeling parameters if, for any (not relabeled) sub-sequences that satisfy $h \downarrow 0$ and $\delta \downarrow0$, there is a further sub-sequence $\{(\delta_k,h_k)\}_{k=1}^\infty$ with $(\delta_k,h_k) \to (0,0)$, and a pair $(\ofraka,\ou) \in \calZ^0$ such that $\phi(\ofraka_{\delta_k,h_k}) \to \phi(\ofraka)$, $\ou_{\delta_k,h_k} \to \ou$ in $X_1$, and $(\ofraka,\ou)$ solves \eqref{eq:OptDesignDeltaAbs} for $\delta = 0$.
\end{definition}

The previous definition deserves some comment. The notion of asymptotic compatibility for a class of well-posed, parametrized linear problems was introduced and developed in \cite{MR3231986,MR4064535}. It, essentially, asserted the convergence of discretizations along all discretization and parameter sub-sequences, and that the limiting object solves the corresponding problem. It is also important to note that, since the problems are assumed well-posed, \emph{the limiting object is unique}. In our setting, on the other hand, we cannot guarantee uniqueness of solutions, regardless of the value of the discretization or modeling parameters. For this reason, a different limiting solution may be reached during the sub-sequence selection process.

We finally comment that, depending on the chosen discretization scheme, certain problems can also exhibit \emph{conditional compatibility}, where convergence to a solution is only possible for a particular sub-sequence of parameters, The interested reader is referred to \cite{MR3231986} for an example in the linear setting. The possibility of constructing conditionally compatible schemes for the problems we discuss here is under investigation.

Before continuing, we make one final set of assumptions. The first one essentially strengthens \eqref{eq:C1} and it is copied from \cite[Assumption 5v]{du2024asymptoticallycompatibleschemesnonlinear}; see also \cite[Assumption 4ii]{MR3231986}. The second one, on  the other hand, states that the discretization scheme for the state equation itself is asymptotically compatible. It should be compared with \eqref{eq:Assumption2.16}, where only the discretization parameter is changing. It may be viewed as a variant of the main result in \cite{du2024asymptoticallycompatibleschemesnonlinear} but where, additionally, the coefficients are allowed to change in a specific way.

\begin{itemize}
  \item \textbf{Asymptotic approximation property:} For any $v \in X_0$ and sequences of parameters $\{(\delta_k,h_k)\}_{k=1}^\infty$ satsfying
  \[
    \begin{dcases}
      \lim_{k \uparrow \infty} \delta_k = 0, \ h_k = h, & h \in (0,h_0), \,\,\text{ or} \\
      \lim_{k \uparrow \infty} \delta_k = \lim_{k \uparrow \infty} h_k = 0,
    \end{dcases}
  \]
  there is $\{v_k \in X_{\delta_k,h_k}\}_{k=1}^\infty$ such that $v_k \to v$ in $X_0$.

  \item \textbf{Asymptotic compatibility of state equations:} Let $(\fraka,u) \in \calZ^0$. For any sequence $\{(\delta_k,h_k)\}_{k=1}^\infty$ with $(\delta_k,h_k) \to (0,0)$ we have
  \begin{equation}
  \label{eq:StateIsAC}
    \left\| u - T_{h_k}^{\delta_k}(\Pi_{h_k}\fraka) \right\|_{X_1} \to 0, \qquad k \uparrow \infty.
  \end{equation}
\end{itemize}

With these assumptions at hand we prove our main abstract result: the asymptotic compatibility of our parametrized discrete optimal design problems.

\begin{theorem}[asymptotic compatibility]
\label{thm:ACAbs}
Let $\{(\ofraka_{\delta,h}, \ou_{\delta,h})\}_{\delta \in (0,1),h \in (0,h_0)}$ be a family of solutions to problem \eqref{eq:DiscrOptDesignDeltaAbs}. Under our running assumptions this family is asymptotically compatible in the sense of Definition~\ref{def:ACAbs}. Moreover, with the notation of this definition, we additionally have
\[
  \lim_{k \uparrow \infty} J(\ofraka_{\delta_k,h_k},\ou_{\delta_k,h_k}) = J(\ofraka,\ou),
\]
and
\begin{equation}
\label{eq:BBMTrickAbs}
  \lim_{k \uparrow\infty} \| \ou - \ou_{\delta_k,h_k} \|_{X_{\delta_k,h_k}} = 0.
\end{equation}
\end{theorem}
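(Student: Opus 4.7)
The plan is to run a double-parameter version of the argument of Theorem~\ref{them:LimHtoZeroAbs}, combined with the $\delta \downarrow 0$ expansion that closes Corollary~\ref{col:LimDelta0}. Passing first to a sub-sequence with $\ofraka_{\delta_k,h_k} \overset{d}{\to} \ofraka$ for some $\ofraka \in \calH$ (using compactness of $\calH$), set $\ou := T^0(\ofraka) \in X_0$, so that $(\ofraka,\ou) \in \calZ^0$ is built in. Lemma~\ref{lem:DiscrStatesAreOKAbs} gives the uniform bound $\|\ou_{\delta_k,h_k}\|_{X_{\delta_k}} \le (M/\alpha)\|f\|_{X_1}$, and~\eqref{eq:S2} supplies a further sub-sequence along which $\ou_{\delta_k,h_k} \to \tilde u$ in $X_1$ with $\tilde u \in X_0$; in particular $\langle f,\ou_{\delta_k,h_k}\rangle \to \langle f,\tilde u\rangle$. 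The remaining work is to verify $\tilde u = \ou$ and that $(\ofraka,\ou)$ minimizes \eqref{eq:OptDesignDeltaAbs} at $\delta = 0$.

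For every $(\fraka,u) \in \calZ^0$ the pair $(\Pi_{h_k}\fraka, T_{h_k}^{\delta_k}(\Pi_{h_k}\fraka))$ lies in $\calZ_{h_k}^{\delta_k}$, so discrete optimality gives $J(\ofraka_{\delta_k,h_k},\ou_{\delta_k,h_k}) \le J(\Pi_{h_k}\fraka,T_{h_k}^{\delta_k}(\Pi_{h_k}\fraka))$; the right-hand side converges to $J(\fraka,u)$ by~\eqref{eq:StateIsAC} and~\eqref{eq:PhiAndProj}. For the matching lim-inf I would adapt the lim-inf step of Theorem~\ref{thm:GconvAsDeltaTo0}: for every $v \in X_0$ the asymptotic approximation property produces $v_k \in X_{\delta_k,h_k}$ with $v_k \to v$ in $X_0$, and energy minimality of $\ou_{\delta_k,h_k}$ in $X_{\delta_k,h_k}$ gives
\[
  -\tfrac12 \langle f, \ou_{\delta_k,h_k}\rangle = E_{\delta_k}[\ofraka_{\delta_k,h_k}](\ou_{\delta_k,h_k}) \le E_{\delta_k}[\ofraka_{\delta_k,h_k}](v_k).
\]
Splitting $\frakB_{\delta_k}[\ofraka_{\delta_k,h_k}](v_k,v_k)$ around the fixed $v$, controlling the cross pieces by~\eqref{eq:E2} with the uniform embedding $X_0 \hookrightarrow X_{\delta_k}$, and applying~\eqref{eq:2.22}--\eqref{eq:2.23} on the $(v,v)$ piece yields $E_{\delta_k}[\ofraka_{\delta_k,h_k}](v_k) \to E_0[\ofraka](v)$. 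Taking $v = \ou$ and using $\frakB_0[\ofraka](\ou,\ou) = \langle f,\ou\rangle$ gives $\langle f,\ou\rangle \le \langle f,\tilde u\rangle$, which together with the lower semicontinuity of $\phi$ is the lim-inf on $J$. The two bounds combine to $\lim_k J(\ofraka_{\delta_k,h_k},\ou_{\delta_k,h_k}) = J(\ofraka,\ou)$ and optimality of $(\ofraka,\ou)$; writing $J = \phi + \langle f,\cdot\rangle$ and using lower semicontinuity of $\phi$ once more produces both $\phi(\ofraka_{\delta_k,h_k}) \to \phi(\ofraka)$ and $\langle f,\tilde u\rangle = \langle f,\ou\rangle$.

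For the identification $\tilde u = \ou$ and the strong convergence~\eqref{eq:BBMTrickAbs}, I would repeat the expansion that closes Corollary~\ref{col:LimDelta0}: from~\eqref{eq:E3},
\[
  \alpha \|\ou - \ou_{\delta_k,h_k}\|_{X_{\delta_k}}^2 \le \frakB_{\delta_k}[\ofraka_{\delta_k,h_k}](\ou,\ou) - 2\frakB_{\delta_k}[\ofraka_{\delta_k,h_k}](\ou,\ou_{\delta_k,h_k}) + \langle f,\ou_{\delta_k,h_k}\rangle,
\]
where the last term is the state identity and has already been identified with $\langle f,\ou\rangle$. The first piece tends to $\frakB_0[\ofraka](\ou,\ou) = \langle f,\ou\rangle$ by the same two-step argument as in the lim-inf analysis. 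The cross piece is accessed by testing the discrete state equation against $v_k \in X_{\delta_k,h_k}$ with $v_k \to \ou$ in $X_0$, so that $\frakB_{\delta_k}[\ofraka_{\delta_k,h_k}](v_k,\ou_{\delta_k,h_k}) = \langle f,v_k\rangle \to \langle f,\ou\rangle$ while the residual $\frakB_{\delta_k}[\ofraka_{\delta_k,h_k}](\ou-v_k,\ou_{\delta_k,h_k})$ is absorbed by~\eqref{eq:E2}. The three contributions collapse to $\langle f,\ou\rangle - 2\langle f,\ou\rangle + \langle f,\ou\rangle = 0$, which is~\eqref{eq:BBMTrickAbs}; the $X_1$ convergence $\ou_{\delta_k,h_k} \to \ou$ then follows from~\eqref{eq:S1} and forces $\tilde u = \ou$.

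The hard part in both the lim-inf step and the cross-term computation is the passage to the limit in $\frakB_{\delta_k}[\ofraka_{\delta_k,h_k}](\cdot,\cdot)$ while \emph{both} the coefficient and one test argument vary with $k$, whereas~\eqref{eq:2.22}--\eqref{eq:2.23} only guarantee parametric continuity for test pairs that are fixed in $X_0$. The uniform remedy appearing throughout is to peel off a fixed $X_0$ element and treat the correction with~\eqref{eq:E2} and the uniform embedding $X_0 \hookrightarrow X_{\delta_k}$, so that the varying remainder has $X_{\delta_k}$-norm controlled by its vanishing $X_0$-norm.
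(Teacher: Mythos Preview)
Your proof is correct and follows essentially the same route as the paper's. The only differences are cosmetic: where the paper uses the Galerkin projection $\calG_{0,h_k}[\ofraka]\ou \in X_{0,h_k} \hookrightarrow X_{\delta_k,h_k}$ (via the discrete embedding~\eqref{eq:DiscrEmbedding}) as the intermediate discrete element in both the lim-inf estimate and the cross-term computation, you use the sequence $v_k$ supplied directly by the asymptotic approximation property, which is equally valid and arguably more direct. You also pre-extract an $X_1$ limit $\tilde u$ via~\eqref{eq:S2} and identify it with $\ou$ at the end, whereas the paper skips this step and obtains $\ou_{\delta_k,h_k}\to\ou$ in $X_1$ straight from~\eqref{eq:BBMTrickAbs} and~\eqref{eq:S1}; your detour is harmless and the identification is correct.
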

\begin{proof}
We begin by simplifying notation. Let $(\ofraka_k,\ou_k) = (\ofraka_{\delta_k,h_k},\ou_{\delta_k,h_k})$. By compactness of $\calH$ there is a (not relabeled) sub-sequence, and $\ofraka \in \calH$ such that $\ofraka_k \overset{d}{\to} \ofraka$. Set $\ou = T^0(\ofraka)$ so that $(\ofraka,\ou) \in \calZ^0$.

Let us now show the convergence of objective values. To achieve this we first recall that \eqref{eq:DiscrEmbedding} implies that $\calG_{0,h_k}[\ofraka]\ou \in X_{0,h_k} \hookrightarrow X_{\delta_k,h_k}$. Next, we need to show that
\[
  \lim_{k \uparrow \infty} \frakB_{\delta_k}[\ofraka_k](\calG_{0,h_k}[\ofraka]\ou,\calG_{0,h_k}[\ofraka]\ou) = \frakB_0[\ofraka](\ou,\ou).
\]
To see this we first note that
\begin{multline*}
  \left| \frakB_{\delta_k}[\ofraka_k](\ou,\ou) - \frakB_{\delta_k}[\ofraka_k](\calG_{0,h_k}[\ofraka]\ou,\calG_{0,h_k}[\ofraka]\ou) \right| \\
   = \left| \frakB_{\delta_k}[\ofraka_k](\ou - \calG_{0,h_k}[\ofraka]\ou, \ou + \calG_{0,h_k}[\ofraka]\ou ) \right|
  \leq A \| \ou - \calG_{0,h_k}[\ofraka]\ou \|_{X_{\delta_k}} \| \ou + \calG_{0,h_k}[\ofraka]\ou \|_{X_{\delta_k}} \\
  \lesssim \| \ou - \calG_{0,h_k}[\ofraka]\ou \|_{X_0} \| \ou + \calG_{0,h_k}[\ofraka]\ou \|_{X_0},
\end{multline*}
where the constant is independent of $\delta_k$ and $h_k$. Owing to \eqref{eq:C1} the first factor tends to zero whereas the second is bounded, thus the product tends to zero. Next, since $\ou \in X_0$, \eqref{eq:2.22} combined with \eqref{eq:2.23} gives
\[
  \lim_{k \uparrow \infty} \frakB_{\delta_k}[\ofraka_k](\ou,\ou) = \frakB_0[\ofraka](\ou,\ou),
\]
and this implies the claim. We then immediately infer that
\[
  \lim_{k \uparrow \infty} E_{\delta_k}[\ofraka_k](\calG_{0,h_k}[\ofraka]\ou) = E_0[\ofraka](\ou).
\]

Next, as we have done several times before, we use the characterizations of $\ou_k$ and $\ou$, respectively, in terms of the value of the energy to assert
\[
  \limsup_{k \uparrow \infty} \left( -\frac12 \frakB_{\delta_k}[\ofraka_k](\ou_k,\ou_k) \right) \leq \lim_{k \uparrow \infty } E_{\delta_k}[\ofraka_k](\calG_{0,h_k}[\ofraka]\ou) = E_0[\ofraka](\ou) = -\frac12 \frakB_0[\ofraka](\ou,\ou).
\]
Consequently,
\begin{equation}
\label{eq:2.91}
\langle f , \ou \rangle \leq \liminf_{k \uparrow \infty } \langle f, \ou_k \rangle.
\end{equation}
This result, and the lower semicontinuity of $\phi$ then imply, by arguments presented before, that
\[
  J(\ofraka,\ou) \leq \liminf_{k \uparrow \infty} J(\ofraka_k,\ou_k).
\]

We now prove the converse inequality. Recall that $(\Pi_{h_k}\ofraka, T^{\delta_k}_{h_k}(\Pi_{h_k}\ofraka)) \in \calZ_{h_k}^{\delta_k}$ and then
\[
  J(\ofraka_k,\ou_k) \leq J( \Pi_{h_k}\ofraka, T^{\delta_k}_{h_k}(\Pi_{h_k}\ofraka) ).
\]
We may now use \eqref{eq:PhiAndProj} and \eqref{eq:StateIsAC} to pass to the limit and get
\[
  \limsup_{k \uparrow \infty} J(\ofraka_k, \ou_k ) \leq \lim_{k\uparrow \infty} \langle f, T^{\delta_k}_{h_k}(\Pi_{h_k}\ofraka) \rangle + \lim_{k\uparrow \infty} \phi(\Pi_{h_k}\ofraka) = \langle f, \ou \rangle + \phi(\ofraka) = J(\ofraka,\ou),
\]
which, as we needed, shows convergence of the objective values.

Now, to show that $(\ofraka,\ou)$ solves \eqref{eq:OptDesignDeltaAbs} with $\delta =0$, let $(\frakb,v) \in \calZ^0$ and observe that
\[
  J(\ofraka_k,\ou_k) \leq J(\Pi_{h_k}\frakb, T^{\delta_k}_{h_k}(\Pi_{h_k}\frakb) ).
\]
Pass to the limit and use previously presented arguments to conclude that $J(\ofraka,\ou) \leq J(\frakb,v)$.

Observe that, owing to \eqref{eq:S1}, $\ou_k \to \ou$ in $X_1$ is implied by \eqref{eq:BBMTrickAbs}. This convergence, together with convergence of objective values, will in turn imply $\phi(\ofraka_k) \to \phi(\ofraka)$. It remains then to prove \eqref{eq:BBMTrickAbs}. We begin by observing that lower semicontinuity of $\phi$, together with convergence of objective values yields
\[
   \limsup_{k \uparrow \infty} \langle f, \ou_k \rangle = \lim_{k \uparrow \infty} J(\ofraka_k,\ou_k) - \liminf_{k \uparrow \infty}\phi(\ofraka_k) \leq J(\ofraka,\ou) - \phi(\ofraka) = \langle f, \ou \rangle,
\]
which combined with \eqref{eq:2.91} yields $\langle f, \ou_k \rangle \to \langle f, \ou \rangle$. Next, consider
\begin{align*}
  \alpha \| \ou - \ou_k \|_{X_{\delta_k}}^2 &\leq \frakB_{\delta_k}[\ofraka_k](\ou - \ou_k,\ou - \ou_k) = \langle f, \ou_k \rangle - 2 \frakB_{\delta_k}[\ofraka_k](\ou,\ou_k)  + \frakB_{\delta_k}[\ofraka_k](\ou,\ou) \\
  &= \langle f, \ou_k \rangle -2 \frakB_{\delta_k}[\ofraka_k](\calG_{0,h_k}[\ofraka]\ou,\ou_k) -2 \frakB_{\delta_k}[\ofraka_k](\ou - \calG_{0,h_k}[\ofraka]\ou,\ou_k) \\
  &+ \frakB_{\delta_k}[\ofraka_k](\ou,\ou) \\
  &= \langle f, \ou_k \rangle -2\langle f, \calG_{0,h_k}[\ofraka]\ou \rangle-2 \frakB_{\delta_k}[\ofraka_k](\ou - \calG_{0,h_k}[\ofraka]\ou,\ou_k) \\
  &+ \frakB_{\delta_k}[\ofraka_k](\ou,\ou).
\end{align*}
Let us now pass to the limit. We just showed how to deal with the first term. The second one converges to $-2\langle f, \ou \rangle$ owing to \eqref{eq:Assumption2.16}. The third one can be controlled as
\[
  \left| \frakB_{\delta_k}[\ofraka_k](\ou - \calG_{0,h_k}\ou,\ou_k) \right| \leq A \| \ou - \calG_{0,h_k}\ou \|_{X_{\delta_k}} \| \ou_k \|_{X_{\delta_k}} \lesssim \| \ou - \calG_{0,h_k}\ou \|_{X_0}
\]
with a constant independent of $\delta_k$ and $h_k$. Here we used \eqref{eq:E2}, the estimate of Lemma~\ref{lem:DiscrStatesAreOKAbs}, and the uniform embedding into $X_0$. We can then conclude, using again \eqref{eq:Assumption2.16}, that this term vanishes in the limit. Finally, since $\ou \in X_0$, we use \eqref{eq:2.22} and \eqref{eq:2.23} to get
\[
  \frakB_{\delta_k}[\ofraka_k](\ou,\ou) \to \frakB_0[\ofraka](\ou,\ou) = \langle f, \ou \rangle.
\]
Combining all the terms \eqref{eq:BBMTrickAbs} is shown.
\end{proof}

\section{Optimal design in nonlocal conductivity}
\label{sec:Conductivity}

We now present the first application of the abstract framework developed in Section~\ref{sec:AbstractSetting}: optimal design in a nonlocal conductivity problem, its analysis, discretization, and asymptotic compatibility.

\subsection{Notation}
\label{sub:Notation}

Before we start applying the abstract framework developed in the previous section, let us introduce  some of the notation we will use in the remaining sections. 
From now on we will assume that $\Omega \subset \R^n$, with $n \in \N$, is a bounded domain with Lipschitz boundary. The parameter $R>0$ shall be called the \emph{horizon} and we set $\Omega_R = \Omega + B_R$, where $B_R$ is the ball of radius $R$ centered at the origin. The \emph{nonlocal boundary} is $\Omega_R \setminus \Omega$. Finally, we set $\calD_R = \left( \Omega \times \Omega_R \right) \cup \left( \Omega_R \times \Omega \right)$.

We will adhere to standard notation with regard to spaces of integrable functions. In addition, we will use Sobolev spaces of integer order $W^{k,p}(D)$, where $D \subset \R^n$ is a domain with Lipschitz boundary, $k \in \N$, and $p \in (1,\infty)$. The closure of $C_0^\infty(D)$ in $W^{k,p}(D)$ is $W^{k,p}_0(D)$. In addition, $H^k(D) = W^{k,2}(D)$ and $H^k_0(D) = W^{k,2}_0(D)$. We will also need to use Sobolev spaces of fractional order. For $s \in (0,1)$ and $p \in (1,\infty)$ we define
\begin{align*}
  W^{s,p}(D) &= \left\{ w \in L^p(D) \ \middle| \ |w|_{W^{s,p}(D)} < \infty \right\}, \\
  |w|_{W^{s,p}(D)}^p &= \gamma_{s,n,p} \iint_{D \times D} \left|\frac{ w(x) - w(y) }{|x-y|^s}\right|^p \frac{\diff x \diff y}{|x-y|^n}, \\
  \| w \|_{W^{s,p}(D)}^p &= \| w \|_{L^p(D)}^p + |w|_{W^{s,p}(D)}^p,
\end{align*}
where
\[
  \gamma_{s,n,p} = p(1-s) \left( \int_{\polS^{n-1}} \omega_1^p \diff \sigma(\omega) \right)^{-1}.
\]
The normalization constant $\gamma_{s,n,p}$ that appears on the seminorm is chosen to guarantee that (see \cite{MR3586796})
\[
  |w|_{W^{s,p}(D)}^p \to \| \GRAD w \|_{\bL^p(D)}^p, \qquad s \uparrow 1,
\]
whenever this makes sense. Moreover,
\[
  \widetilde{W}^{s,p}(D) = \left\{ w \in W^{s,p}(\R^n) \ \middle| \ \supp(w) \subset D \right\}.
\]
For $p=2$ we set $H^s(D) = W^{s,2}(D)$ and $\widetilde{H}^s(D)  = \widetilde{W}^{s,2}(D)$. Finally, we mention that spaces of vector-valued functions and their elements will be written using bold typeface.

Let $k \in \N_0$. By $\polP_k$ we denote the space of polynomials of total degree at most $k$. To deal with discretizations we assume that $\Omega$ is a polytope, so that it can be triangulated exactly. By $\Tr = \{\Triang_h\}_{h \in (0,h_0)}$ we denote a quasi-uniform, in the sense of \cite{MR4242224,MR2373954,MR520174}, family of conforming triangulations of $\Omega$ of size $h>0$. Here $h_0$ is, say, the mesh size of the coarsest triangulation.

For $r,k \in \N_0$ we define
\[
  \calL_k^r(\Triang_h) = \left\{ w_h \in C^{r-1}(\bar\Omega) \ \middle| \ w_{h|T} \in \polP_k, \ \forall T \in \Triang_h \right\},
\]
with the understanding that functions in $C^{-1}(\bar\Omega)$ are simply bounded in $\Omega$. We define $V_h = \calL_1^1(\Triang_h) \cap W^{1,1}_0(\Omega)$ and observe that such functions can be trivially extended by zero to $\Omega_R$. When no confusion arises, we will make no notational distinction between a function and its zero extension.

\subsection{Setup}
\label{sub:SetupConductivity}

Let us now describe our problem, and verify that it fits the framework developed in Section~\ref{sec:AbstractSetting}. We begin with a slight change of notation. Namely, we shall replace the parameter $\delta$, which we think of as \emph{degree of nonlocality}, by $s \in (0,1)$ (differentiability order), via the formal replacement $\delta = 1-s$. Thus the limit we care about is $s \uparrow 1$. In what follows we shall index everything using $s$ and not $\delta$.

\begin{itemize}
  \item Let $0<a_{\min} \leq a_{\max}$ be constants. The design space is the set of functions
  \[
    \calH = \left\{ a \in L^\infty(\Omega_R) \ \middle| \ a_{\min} \leq a(x) \leq a_{\max} \mae \Omega \right\}
  \]
  under the weak-* convergence in $L^{\infty}$. Since $\calH$ is a bounded subset of the Banach space $L^\infty(\Omega_R)$, the Banach-Alaoglu theorem \cite[Theorem 3.16]{MR2759829} guarantees that sequences in $\calH$ always possess subsequences that converge weak-* to an element in $L^{\infty}(\Omega_R)$. It is also straightforward to show that the limit  is bounded from below and above by $a_{\min}$ and $a_{\max}$, respectively, demonstrating  the compactness of $\calH$ under weak-* convergence.  Moreover, since $L^1(\Omega)$ is separable and $(L^1(\Omega))' = L^\infty(\Omega)$, the set $\calH$ is metrizable in the weak-* topology of $L^{\infty}(\Omega)$,  \cite[Theorem 3.28]{MR2759829}. This is the metric that shall be denoted by $d$.

  \item Set $X_1 = H^1_0(\Omega)$, $X_s= \widetilde{H}^{s}(\Omega)$ for $s \in (0,1)$, $X_0 = L^2(\Omega)$. Owing to the choice of normalization constant in the norm of $\widetilde{H}^s(\Omega)$, this scale satisfies a uniform embedding property; see \cite[Remark 6]{MR3586796}.

  \item We now define the bilinear forms that define the state equations:
  \begin{equation}
  \label{eq:BLFormsConductivity}
    \begin{aligned}
      \frakB_1[\fraka](v,w) &= \int_\Omega \fraka(x) \GRAD w(x)\cdot \GRAD v(x) \diff x, \\
      \frakB_{s}[\fraka](v,w) &= \gamma_{s,n,2}\iint_{\calD_R} \frakA(x,y) \frac{ v(x) - v(y) }{|x-y|^s} \frac{ w(x) - w(y) }{|x-y|^s} \frac{\diff x \diff y}{|x-y|^n},
    \end{aligned}
  \end{equation}
  where $s \in (0,1)$. Here and in what follows, for $\fraka \in \calH$ and $x,y \in \Omega_R$, we set $\frakA(x,y) = \tfrac12 \left( \fraka(x) + \fraka(y) \right)$. 
  \item Let $q \in (1,\infty)$ and $\Lambda \in L^\infty(\Omega)$ be strictly positive, \ie there is $\lambda > 0$ for which we have $\Lambda(x) \geq \lambda$ \mae $\Omega$. Finally, the function $\phi$ shall be
  \[
    \phi(\fraka) = \int_\Omega \Lambda(x) |\fraka(x)|^q \diff x.
  \]
  Indeed, since $\Omega$ is bounded, $L^{\tfrac{q}{q-1}}(\Omega) \subset L^1(\Omega)$. Thus, weak-* convergence in $L^\infty(\Omega)$ implies weak convergence in $L^q(\Omega)$ which, in turn, yields lower semicontinuity of $\phi$.
\end{itemize}

Let us then, for clarity, state the class of problems we are currently interested in. We let $s \in (0,1]$, $q \in (1,\infty)$ and assume that $f \in L^2(\Omega)$ is given. We seek to find a pair $(\fraka,u) \in \calH \times \widetilde{H}^s(\Omega)$ that minimizes
\begin{equation}
\label{eq:ObjConductivity}
  J(\fraka,u) = \int_\Omega f(x) u(x) \diff x + \int_\Omega \Lambda(x) |\fraka(x) |^q \diff x,
\end{equation}
subject to
\begin{equation}
\label{eq:StateConductivity}
  \frakB_{s}[\fraka](u,v) = \int_\Omega f(x) v(x) \diff x, \qquad \forall v \in \widetilde{H}^s(\Omega).
\end{equation}
Here, for uniformity, we have set $\widetilde{H}^1(\Omega) = H^1_0(\Omega)$.

\subsection{Verification of structural assumptions and results}
\label{sub:AnalysisConductivity}

Let us now verify that our optimal design in nonlocal conductivity verifies all the assumptions of our abstract framework.

\begin{itemize}
  \item \textbf{Uniform embedding:} This holds owing to the fractional Poincar\'e inequality with uniform constant; see \cite[Lemma 3.2]{MR4629861}, \cite[Theorem 1.1]{MR2041005}, and the references therein for a proof.

  \item \textbf{Asymptotic compactness:} This is \cite[Theorem 1.2]{MR2041005}.

  \item \textbf{Strong continuity:} Let $s=1$. If $v \in H^1_0(\Omega)$, then $|\GRAD v|^2 \in L^1(\Omega)$. Since $\fraka_j \rightharpoonup^* \fraka$ in $L^\infty(\Omega)$,
  \[
    \int_\Omega \fraka_j(x) |\GRAD v(x)|^2 \diff x \to \int_\Omega \fraka(x) |\GRAD v(x)|^2 \diff x.
  \]
  For the case $s \in (0,1)$ we observe that if $v \in \widetilde{H}^s(\Omega)$, then the mapping $\calD_R \ni (x,y) \mapsto \tfrac{|v(x) - v(y)|^2}{|x-y|^{n+2s}} \in \R$ belongs to $L^1(\calD_R)$. Arguing as in the previous case, we then see that a sufficient condition is that the coefficient $\frakA_j \rightharpoonup^* \frakA$ in $L^\infty(\calD_R)$. This is the content of the following result.
\end{itemize}

\begin{lemma}[weak-* convergence]
\label{lem:aimpliesA}
Let $\{\fraka_j\}_{j=1}^\infty \subset \calH$ and $\fraka \in \calH$ be such that $\fraka_j \rightharpoonup^* \fraka$ in $L^\infty(\Omega_R)$. Define
\[
  \frakA_j(x,y) = \frac12 \left( \fraka_j(x) + \fraka_j(y) \right), \qquad \frakA(x,y) = \frac12 \left( \fraka(x) + \fraka(y) \right).
\]
Then we have $\frakA_j \rightharpoonup^* \frakA$ in $L^\infty(\calD_R)$.
\end{lemma}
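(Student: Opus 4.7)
The plan is to verify the definition of weak-$*$ convergence directly. Fix an arbitrary test function $\Phi \in L^1(\calD_R)$; we must show that
\[
  \iint_{\calD_R} \bigl( \frakA_j(x,y) - \frakA(x,y) \bigr) \Phi(x,y) \diff x \diff y \to 0.
\]
By the very definition of $\frakA_j$ and $\frakA$, the integrand decomposes as
\[
  \frakA_j(x,y) - \frakA(x,y) = \tfrac12 \bigl( \fraka_j(x) - \fraka(x) \bigr) + \tfrac12 \bigl( \fraka_j(y) - \fraka(y) \bigr),
\]
so it suffices to treat each of the two resulting integrals separately.

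For the first one, I would introduce the sliced integral $\psi(x) = \int_{\Omega_R} \Phi(x,y) \mathbf{1}_{\calD_R}(x,y) \diff y$. Since $\Phi \cdot \mathbf{1}_{\calD_R} \in L^1(\Omega_R \times \Omega_R)$, Fubini's theorem shows that $\psi$ is defined almost everywhere and belongs to $L^1(\Omega_R)$, with $\|\psi\|_{L^1(\Omega_R)} \leq \|\Phi\|_{L^1(\calD_R)}$. Consequently,
\[
  \iint_{\calD_R} \bigl( \fraka_j(x) - \fraka(x) \bigr) \Phi(x,y) \diff x \diff y = \int_{\Omega_R} \bigl( \fraka_j(x) - \fraka(x) \bigr) \psi(x) \diff x,
\]
which tends to zero by the assumed weak-$*$ convergence $\fraka_j \rightharpoonup^* \fraka$ in $L^\infty(\Omega_R)$ tested against the $L^1$ function $\psi$. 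The symmetric argument, this time defining $\tilde{\psi}(y) = \int_{\Omega_R} \Phi(x,y) \mathbf{1}_{\calD_R}(x,y) \diff x$, handles the second half. Combining the two limits yields the claim.

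There is no real obstacle here; the only point that deserves a second thought is the fact that $\calD_R$ is not a product domain but a union of two product pieces, which is handled cleanly by inserting the indicator $\mathbf{1}_{\calD_R}$ before applying Fubini. Thus the argument is a routine consequence of Fubini and the definition of weak-$*$ convergence.
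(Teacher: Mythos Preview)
Your proof is correct and follows essentially the same route as the paper: both reduce the two-variable weak-$*$ convergence to the assumed one-variable convergence via Fubini, splitting $\frakA_j-\frakA$ into its $x$- and $y$-parts and integrating out the other variable to obtain an $L^1(\Omega_R)$ test function. The only cosmetic difference is that you handle the non-product domain $\calD_R$ by inserting the indicator $\mathbf{1}_{\calD_R}$ before applying Fubini on $\Omega_R\times\Omega_R$, whereas the paper decomposes $\calD_R$ into its two product pieces explicitly.
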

\begin{proof}
Let $\psi \in L^1(\calD_R)$, then
\[
  \iint_{\calD_R} \frakA_j(x,y) \psi(x,y) \diff x \diff y = \frac12 \left[ \iint_{\calD_R} \fraka_j(x) \psi (x,y) \diff x \diff y + \iint_{\calD_R} \fraka_j(y) \psi (x,y) \diff x \diff y \right].
\]
In addition, since $\fraka_j \in \calH$, we may estimate
\begin{align*}
  \iint_{\calD_R}|\fraka(x) \psi(x,y)| \diff x \diff y &\leq a_{\max} \iint_{\calD_R} |\psi(x,y)| \diff x \diff y, \\
  \iint_{\calD_R}|\fraka(y) \psi(x,y)| \diff x \diff y &\leq a_{\max} \iint_{\calD_R} |\psi(x,y)| \diff x \diff y.
\end{align*}
This justifies the application of Fubini's Theorem to obtain, for instance,
\[
  \iint_{\calD_R} \fraka_j(x) \psi(x,y) \diff x \diff y = \int_\Omega \fraka_j(x) \int_{\Omega_R} \psi(x,y) \diff y \diff x+ \int_{\Omega} \fraka_j(y) \int_{\Omega_R} \psi(x,y) \diff x \diff y.
\]
Next we observe that, since $\psi \in L^1(\calD_R)$, for almost every $x \in \Omega_R$ the mapping $\Omega_R \ni y \mapsto v(x,y)$ belongs to $L^1(\Omega_R)$. Thus we may pass to the limit in the previous identity. Collecting back and applying, again, Fubini's Theorem we get the result.
\end{proof}

\begin{itemize}
  \item \textbf{Uniform boundedness:} For $s=1$ we have
  \[
    \left| \int_\Omega \fraka(x) |\GRAD v(x)|^2 \diff x \right| \leq a_{\max} \| \GRAD v \|_{\bL^2(\Omega)}^2.
  \]
  Similarly, for $s \in (0,1)$,
  \[
    \left| \gamma_{s,n,2} \iint_{\calD_R} \frakA(x,y) \frac{|v(x)-v(y)|^2}{|x-y|^{2s}} \frac{\diff x \diff y}{|x-y|^n} \right| \leq a_{\max} |v|_{\widetilde{H}^s(\Omega)}^2
  \]
  In summary, $A = a_{\max}$.

  \item \textbf{Uniform coercivity:} We repeat the previous arguments to obtain that $\alpha = a_{\min}$.

  \item \textbf{Parametric continuity:} A result similar to \eqref{eq:2.22} was proven in \cite[Theorem 8]{MR3339075}. The only difference is that, there, the result is proven for the case $v = w$ and when the integration is over $\Omega_R \times \Omega_R$ instead of $\calD_R$. However, the same proof applies to our case as well. In \cite[Theorem 5.4]{MR4629861} a proof of \eqref{eq:2.23} is given for the elasticity and peridynamics cases. Once again, the proof can be adapted to the case we are interesed here.

  \item \textbf{Lower semicontinuity:} See \cite[Theorem 1]{MR4201703} for a proof.
\end{itemize}

Having verified the structural assumptions, we expediently obtain existence of minimizers for problem \eqref{eq:ObjConductivity}--\eqref{eq:StateConductivity}.

\begin{lemma}[uniform well posedness]
For every $s \in (0,1]$ and each coefficient $\fraka \in \calH$ there is a unique $u_s \in \widetilde{H}^s(\Omega)$ that solves \eqref{eq:StateConductivity}. In addition, this solution satisfies
\[
  | u_s |_{\widetilde{H}^s(\Omega)} \lesssim \| f \|_{L^2(\Omega)},
\]
where the implicit constant is independent of $s$.
\end{lemma}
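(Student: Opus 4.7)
The plan is to recognize that this is essentially a direct instantiation of the abstract Lemma~\ref{lem:StatesAreOKAbs} applied to the concrete setting just constructed in Section~\ref{sub:SetupConductivity}. All of the abstract structural assumptions \eqref{eq:S1}, \eqref{eq:E2}, and \eqref{eq:E3} have been verified in Section~\ref{sub:AnalysisConductivity} with constants $M$, $A = a_{\max}$, and $\alpha = a_{\min}$ that do not depend on $s$. Consequently, the scheme of the proof is: (i) invoke Lax--Milgram on the space $X_s = \widetilde{H}^s(\Omega)$ (with $\widetilde{H}^1(\Omega) = H^1_0(\Omega)$) to deduce existence and uniqueness of $u_s$; (ii) test the state equation against $u_s$ itself and exploit uniform coercivity together with the fractional Poincar\'e inequality with $s$-independent constant to extract the seminorm estimate.

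In detail, first I would fix $s \in (0,1]$ and $\fraka \in \calH$ and observe that, by the verifications in Section~\ref{sub:AnalysisConductivity}, the bilinear form $\frakB_s[\fraka]$ is symmetric, continuous on $\widetilde{H}^s(\Omega) \times \widetilde{H}^s(\Omega)$ with continuity constant $a_{\max}$, and coercive with constant $a_{\min}$; moreover the linear functional $v \mapsto \int_\Omega f v \diff x$ is bounded on $\widetilde{H}^s(\Omega)$ because $f \in L^2(\Omega) = X_0 \hookrightarrow X_s$ via the uniform embedding and (for $s < 1$) the fractional Poincar\'e inequality. Hence Lax--Milgram yields a unique $u_s \in \widetilde{H}^s(\Omega)$ satisfying \eqref{eq:StateConductivity}.

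For the uniform bound, I would test \eqref{eq:StateConductivity} with $v = u_s$ and use the coercivity constant $a_{\min}$ to write
\[
  a_{\min} |u_s|_{\widetilde{H}^s(\Omega)}^2 \leq \frakB_s[\fraka](u_s,u_s) = \int_\Omega f(x) u_s(x) \diff x \leq \| f \|_{L^2(\Omega)} \| u_s \|_{L^2(\Omega)}.
\]
Then the uniform embedding \eqref{eq:S1}, which at the concrete level is the fractional Poincar\'e inequality with $s$-independent constant $M$ referenced from \cite{MR4629861,MR2041005}, gives $\| u_s \|_{L^2(\Omega)} \leq M |u_s|_{\widetilde{H}^s(\Omega)}$; dividing by $|u_s|_{\widetilde{H}^s(\Omega)}$ produces
\[
  |u_s|_{\widetilde{H}^s(\Omega)} \leq \frac{M}{a_{\min}} \| f \|_{L^2(\Omega)},
\]
which is the claimed estimate with implicit constant independent of $s$.

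The only subtlety worth highlighting, and the point where the abstract template must be matched to the concrete scale, is precisely the $s$-independence of the Poincar\'e-type constant $M$: this is guaranteed by the normalization factor $\gamma_{s,n,2}$ entering the seminorm of $\widetilde{H}^s(\Omega)$, as recalled in the setup, and it is the mechanism that keeps the embedding constants bounded up to $s=1$. Once this is in place, nothing else is genuinely problem-specific and the proof is essentially a verbatim translation of Lemma~\ref{lem:StatesAreOKAbs}.
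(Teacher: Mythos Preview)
Your proposal is correct and matches the paper's own proof, which consists of the single sentence ``This is Lemma~\ref{lem:StatesAreOKAbs} in the conductivity setting.'' You have simply unpacked that invocation. One small notational slip: you write ``$f \in L^2(\Omega) = X_0 \hookrightarrow X_s$'', but in the conductivity indexing $X_0 = L^2(\Omega)$ is the \emph{largest} space (it plays the role of the abstract $X_1$), so the embedding goes the other way, $\widetilde{H}^s(\Omega) \hookrightarrow L^2(\Omega)$; your subsequent computation uses the correct direction, so this does not affect the argument.
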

\begin{proof}
This is Lemma~\ref{lem:StatesAreOKAbs} in the conductivity setting.
\end{proof}

Owing to this we can define the design-to-state mapping $T^{s} : \calH \to \widetilde{H}^s(\Omega)$ and the reduced costs $\{r^{s}\}_{s \in (0, 1]}$.

\begin{theorem}[existence]
For every $s \in (0,1]$, problem \eqref{eq:ObjConductivity}--\eqref{eq:StateConductivity} has a solution.
\end{theorem}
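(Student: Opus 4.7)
The plan is to treat this as an immediate corollary of the abstract existence result, Theorem~\ref{thm:ExistenceAbs}. All the heavy lifting has already been done in Section~\ref{sub:AnalysisConductivity}, where each of the structural hypotheses \eqref{eq:S1}, \eqref{eq:S2}, \eqref{eq:E1}--\eqref{eq:2.24} has been verified in the conductivity setting. Thus the proof reduces to checking the standing hypotheses on the design space $\calH$ and the cost functional $\phi$, and then invoking the abstract theorem.

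First, I would verify that $\calH$, equipped with the metric $d$ induced by weak-$*$ convergence (which exists by separability of $L^1(\Omega_R)$ and the Banach--Alaoglu theorem, as already noted in Section~\ref{sub:SetupConductivity}), is indeed a compact metric space. Second, I would check that the functional $\phi(\fraka) = \int_\Omega \Lambda(x) |\fraka(x)|^q \diff x$ is non-negative (obvious, since $\Lambda \geq \lambda > 0$ and $q > 1$), convex (immediate from convexity of $|\cdot|^q$ and linearity of integration against a non-negative weight), and lower semicontinuous with respect to $d$. The last point, sequential weak-$*$ lower semicontinuity, was already observed in Section~\ref{sub:SetupConductivity}: weak-$*$ convergence in $L^\infty(\Omega_R)$ forces weak convergence in $L^q(\Omega)$ (since $\Omega$ is bounded and $L^{q/(q-1)}(\Omega) \subset L^1(\Omega)$), and $\phi$ is convex and continuous on $L^q(\Omega)$, hence weakly lower semicontinuous.

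With these checks in place, the cost functional $J$ of \eqref{eq:ObjConductivity} has precisely the regularized compliance form \eqref{eq:ObjectiveAbs}, and all structural assumptions of Section~\ref{sub:AssumeAbstract} hold by the bullet-point verifications that precede the statement. Theorem~\ref{thm:ExistenceAbs} therefore applies and yields a solution $(\ofraka_s, \ou_s) \in \calZ^s$ for every $s \in (0,1]$, where as usual we identify $\delta$ with $1-s$ and set $\widetilde H^1(\Omega) = H^1_0(\Omega)$.

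There is no serious obstacle here; the only mildly delicate point is to be explicit that nothing in the abstract framework is lost by the reparametrization $\delta \leftrightarrow 1-s$ and by allowing the endpoint $s=1$ (corresponding to $\delta=0$), since the abstract theorem was stated for $\delta \in [0,1)$ and covers both the local and nonlocal cases simultaneously. Once this is noted, the proof is a one-line invocation of Theorem~\ref{thm:ExistenceAbs}.
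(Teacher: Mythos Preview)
Your proposal is correct and matches the paper's approach exactly: the paper's proof is the single line ``Invoke Theorem~\ref{thm:ExistenceAbs},'' relying on the structural verifications already carried out in Sections~\ref{sub:SetupConductivity} and~\ref{sub:AnalysisConductivity}. Your additional remarks about the reparametrization $\delta \leftrightarrow 1-s$ and the endpoint $s=1$ are fine but not strictly needed, since the abstract theorem was stated for all $\delta \in [0,1)$.
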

\begin{proof}
Invoke Theorem~\ref{thm:ExistenceAbs}.
\end{proof}

Having shown existence of minima, we can show the variational convergence of these problems.

\begin{theorem}[convergence of minimizers]
Let $\{\ofraka_s\}_{s \in (0,1)}$ be a family of minimizers of the reduced costs $\{r^{s}\}_{s \in (0,1)}$. In other words, the family of pairs $\{(\ofraka_s,\ou_s=T^{s}(\ofraka_s))\}_{s \in (0,1))}$ solve problem \eqref{eq:ObjConductivity}--\eqref{eq:StateConductivity}. Then, as $s \uparrow 1$,
\begin{itemize}
  \item The family $\{r^{s}\}_{s \in (0,1)}$ $\Gamma$-converges to $r^1$ in the metric of weak-* convergence in $\calH$. Moreover, this family is equicoercive.

  \item There exists $\ofraka_1 \in \calH$ so that, up to sub-sequences, $\ofraka_s \rightharpoonup^* \ofraka_1$ in $L^\infty(\Omega)$. Moreover,
  \[
    \lim_{s \uparrow 1} r^{s}(\ofraka_s) = r^1(\ofraka_1), \qquad \lim_{s \uparrow 1} \| \ou_1 - \ou_s \|_{\widetilde{H}^s(\Omega)} = 0,
  \]
  we have $\ou_s \to \ou_1$ in $L^2(\Omega)$. Finally $\ofraka_s \to \ofraka_1$ in $L^q(\Omega)$ and $(\ofraka_1,\ou_1)$ is a solution to \eqref{eq:ObjConductivity}--\eqref{eq:StateConductivity} when $s = 1$.
\end{itemize}
\end{theorem}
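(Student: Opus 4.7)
The plan is to obtain this theorem as a direct application of the abstract machinery from Section~\ref{sec:AbstractSetting}, using the verifications already carried out in Section~\ref{sub:AnalysisConductivity}. The notational identification is $\delta = 1-s$, so that the limit $\delta \downarrow 0$ becomes $s \uparrow 1$, the abstract state spaces $X_\delta^{\text{abs}}$, $X_0^{\text{abs}}$, $X_1^{\text{abs}}$ correspond to $\widetilde{H}^s(\Omega)$, $H^1_0(\Omega)$, and $L^2(\Omega)$, respectively, and $r^0$ of the abstract framework corresponds to $r^1$ here. Since all structural assumptions \eqref{eq:S1}, \eqref{eq:S2}, \eqref{eq:E1}--\eqref{eq:2.24}, as well as lower semicontinuity of $\phi$, have been checked in Section~\ref{sub:AnalysisConductivity}, the hypotheses of Theorem~\ref{thm:GconvAsDeltaTo0} and Corollary~\ref{col:LimDelta0} are met.

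First I would invoke Theorem~\ref{thm:GconvAsDeltaTo0}, which immediately yields the $\Gamma$-convergence of $\{r^{s}\}_{s \in (0,1)}$ to $r^1$ with respect to the weak-$*$ metric $d$ on $\calH$, together with equicoercivity. Next, Corollary~\ref{col:LimDelta0} applied to the family of minimizers $\{\ofraka_s\}$ produces, along a subsequence, a limit $\ofraka_1 \in \calH$ with $\ofraka_s \rightharpoonup^* \ofraka_1$ in $L^\infty(\Omega)$; it also gives the convergence of reduced costs $r^{s}(\ofraka_s) \to r^1(\ofraka_1)$, the energy-norm convergence $\|\ou_1 - \ou_s\|_{\widetilde{H}^s(\Omega)} \to 0$, the $L^2(\Omega)$ convergence $\ou_s \to \ou_1$, the fact that $(\ofraka_1, \ou_1)$ solves the limiting problem, and the convergence of the regularizer $\phi(\ofraka_s) \to \phi(\ofraka_1)$.

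The only claim not covered verbatim by Corollary~\ref{col:LimDelta0} is the strong convergence $\ofraka_s \to \ofraka_1$ in $L^q(\Omega)$, and this is where I expect to do a small amount of work beyond quoting the abstract results. Since $\Omega$ is bounded, weak-$*$ convergence in $L^\infty(\Omega)$ yields weak convergence in $L^q(\Omega)$ for every $q \in (1,\infty)$. In addition, the convergence $\phi(\ofraka_s) \to \phi(\ofraka_1)$ means that $\int_\Omega \Lambda |\ofraka_s|^q \diff x \to \int_\Omega \Lambda |\ofraka_1|^q \diff x$, that is, convergence of the weighted norms $\|\cdot\|_{L^q_\Lambda}$. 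Because $\lambda \leq \Lambda \leq \|\Lambda\|_{L^\infty}$ almost everywhere, this weighted norm is equivalent to the standard $L^q(\Omega)$ norm and defines a uniformly convex Banach space for $q \in (1,\infty)$. In a uniformly convex space, weak convergence combined with convergence of norms implies strong convergence, so $\ofraka_s \to \ofraka_1$ strongly in $L^q_\Lambda(\Omega)$, and hence, by norm equivalence, strongly in $L^q(\Omega)$.

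The main obstacle for this theorem is really already behind us: the structural verifications in Section~\ref{sub:AnalysisConductivity}, especially the coefficient convergence Lemma~\ref{lem:aimpliesA} and the parametric continuity statements \eqref{eq:2.22}--\eqref{eq:2.23} pulled from \cite{MR3339075,MR4629861,MR4201703}. Once those are in hand, the current statement is almost a translation, with the uniform convexity step for the $L^q$ strong convergence being the only additional ingredient.
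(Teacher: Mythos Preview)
Your proposal is correct and matches the paper's own proof, which simply states that the result ``combines Theorem~\ref{thm:GconvAsDeltaTo0} and Corollary~\ref{col:LimDelta0}.'' Your additional uniform-convexity argument to upgrade $\phi(\ofraka_s)\to\phi(\ofraka_1)$ into strong $L^q$ convergence of $\ofraka_s$ is a detail the paper leaves implicit, and your justification via the Radon--Riesz property in the weighted space $L^q_\Lambda$ is sound.
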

\begin{proof}
This result combines Theorem~\ref{thm:GconvAsDeltaTo0} and Corollary~\ref{col:LimDelta0}.
\end{proof}

\subsection{Discretization and results}
\label{sub:DiscretizationConductivity}

Here we specify the discretization scheme we shall employ to treat problem \eqref{eq:ObjConductivity}--\eqref{eq:StateConductivity}. We recall that we are assuming that $\Omega$ is a polytope. For every $s \in (0,1]$ we set $X_{s,h} = V_h$ algebraically, but with the norm of $\widetilde{H}^s(\Omega)$. In addition, we set $\calH_h = \calL^{0}_0(\Triang_h) \cap \calH$, \ie the set of piecewise constant, with respect to $\Triang_h$, functions that belong to $\calH$. Clearly, this set is weak-* closed in $L^\infty(\Omega)$.

Let us now verify the assumptions on the discretization.

\begin{itemize}
  \item \textbf{Discrete embedding:} Since our discretization is conforming, this follows from the uniform embedding of Section~\ref{sub:AnalysisConductivity}.

  \item \textbf{Approximation property:} This can be achieved by using the Scott-Zhang or Chen-Nochetto interpolants and their properties; see \cite{MR1011446,MR1742264,MR3702417} for $s=1$, and \cite{MR3118443,MR4238777,MR4053241} for $s \in (0,1)$.

  \item \textbf{Density:} We let $\Pi_h$ be the $L^2(\Omega)$ projection onto $\calL^{-1}_0(\Triang_h)$, \ie
  \[
    \Pi_h \fraka = \sum_{T \in \Triang_h} \left(\fint_T \fraka(x) \diff x \right) \chi_T,
  \]
  where $\fint_E w(x) \diff x = \tfrac1{|E|}\int_E w(x) \diff x$, and $\chi_S$ denotes the characteristic function of the set $S$. Clearly, if $\fraka\in\calH$, then $\Pi_h \fraka \in \calH_h$. Moreover, by properties of projections, we have that $\Pi_h \fraka \to \fraka$ in $L^2(\Omega)$ as $h \downarrow0$. Since $\Omega$ is bounded this implies convergence in any $L^r(\Omega)$ for $r \in [1,2]$. On the other hand, if $r \in (2,\infty)$, by interpolation we have
  \begin{equation}
  \label{eq:CoeffLrConvConductivity}
    \begin{aligned}
      \| \Pi_h \fraka - \fraka \|_{L^r(\Omega)} &\leq \| \Pi_h \fraka - \fraka \|_{L^2(\Omega)}^{2/r} \| \Pi_h \fraka - \fraka \|_{L^\infty(\Omega)}^{1-2/r} \\
         &\leq (2a_{\max})^{1-2/r} \| \Pi_h \fraka - \fraka \|_{L^2(\Omega)}^{2/r} \to 0,
    \end{aligned}
  \end{equation}
  as $h \downarrow0$. This then shows that, for every $\fraka \in \calH$, we have $\phi(\Pi_h\fraka) \to \phi(\fraka)$ as $h \downarrow0$.

  For future reference, we also record the fact that, up to sub-sequences, $\Pi_h \fraka \rightharpoonup^* \fraka$ in $L^\infty(\Omega)$ as $h \downarrow0$. Indeed, the convergence in $L^2(\Omega)$ implies that there is a (not relabeled) subequence such that $\Pi_h \fraka \to \fraka$ almost everywhere. In addition, if $\phi \in L^1(\Omega)$ is arbitrary, then
  \[
    \left| (\Pi_h \fraka - \fraka ) \phi \right| \leq 2a_{\max}|\phi| \in L^1(\Omega).
  \]
  By dominated convergence, we then conclude that
  \[
    \int_\Omega \left(\Pi_h \fraka(x) - \fraka(x) \right) \phi(x) \diff x \to 0.
  \]

  \item \textbf{Perturbed Galerkin projection:} Let $\fraka \in \calH$, $s \in (0,1]$, and $v = T^s[\fraka] \in \widetilde{H}^s(\Omega)$. Recall that, in our setting, given $v \in \widetilde{H}^s(\Omega)$ the functions $\calG_{s,h}[\fraka]v,\hat\calG_{s,h}[\fraka]v \in V_h$ are such that
  \[
    \frakB_s[\fraka](v,w_h) = \frakB_s[\fraka](\calG_{s,h}[\fraka]v,w_h) = \frakB_s[\Pi_h\fraka](\hat\calG_{s,h}[\fraka]v,w_h), \qquad \forall w_h \in V_h.
  \]
  Moreover, we have that $\calG_{s,h}[\fraka]v \to v$ in $\widetilde{H}^s(\Omega)$, as $h \downarrow0$. Thus, upon setting $e = v - \calG_{s,h}[\fraka]v$ and $\hat e = v - \hat\calG_{s,h}[\fraka]v$, we can estimate
  \begin{equation}
  \label{Eq: mainPerturbed}
    \begin{aligned}
      a_{\min} | \hat e |_{\widetilde{H}^s(\Omega)}^2 &\leq \frakB_s[\Pi_h\fraka](\hat e,\hat e)  \\
        &= \frakB_s[\Pi_h\fraka](\hat e, e) + \frakB_s[\Pi_h \fraka]\left(\hat e, \left( \calG_{s,h}-\hat\calG_{s,h} \right)[\fraka]v \right)  \\
      &= \mathrm{I} + \mathrm{II}.
    \end{aligned}
  \end{equation}
  For the first term we obtain
  \[
    \mathrm{I} \leq a_{\max} | e |_{\widetilde{H}^s(\Omega)}  | \hat  e |_{\widetilde{H}^s(\Omega)}
    \leq \frac{a_{\min} }4 | \hat  e |_{\widetilde{H}^s(\Omega)}^2 + \frac{a_{\max}^2}{a_{\min}} | e |_{\widetilde{H}^s(\Omega)}^2.
  \]
  For the second one, we use the definition of the perturbed Galerkin projection to obtain
  \begin{align*}
    \mathrm{II} &= \frakB_s[\Pi_h \fraka - \fraka]\left(v, ( \calG_{s,h}[\fraka]-\hat\calG_{s,h}[\fraka] )v \right) = \frakB_s[\fraka - \Pi_h\fraka](v,e) + \frakB_s[\Pi_h \fraka - \fraka](v,\hat e) \\
    &\leq 2A |v|_{\widetilde{H}^s(\Omega)} |e|_{\widetilde{H}^s(\Omega)} + \frakB_s[\Pi_h \fraka - \fraka](v,\hat e),
  \end{align*}
  where we also used that, in our particular setup and for every $w_1,w_2 \in \widetilde{H}^s(\Omega)$, the mapping $\frakb \mapsto \frakB_s[\frakb](w_1,w_2)$ is linear. Gathering all the obtained estimates we see that
  \[
    | \hat e |_{\widetilde{H}^s(\Omega)}^2 \lesssim |e|_{\widetilde{H}^s(\Omega)}^2 + |e|_{\widetilde{H}^s(\Omega)} + \frakB_s[\Pi_h \fraka - \fraka](v,\hat e).
  \]
  The first two terms tend to zero as $h \downarrow0$. The last term, however, requires special treatment. Boundedness of $\fraka$ alone is not enough to either absorb this term on the left hand side, or to show that it vanishes in the limit. For this reason, we examine it separately for each case $s=1$ and $s \in (0,1)$.
  \begin{itemize}
    \item \textbf{The case $s=1$:} Let $(\fraka,v) \in \calZ^1$. We recall that, since $f \in L^2(\Omega)$, owing to \cite[Theorem 2]{MR159110}, see also \cite[Section 2.2]{MR3129757}, there is $\epsilon>0$ such that, assuming that $\fraka \in \calH$ only, the solution to \eqref{eq:StateConductivity} satisfies
    \[
      \|\GRAD v\|_{\bL^{2+\epsilon}(\Omega)} \leq C \| f \|_{L^2(\Omega)},
    \]
    where $\epsilon$ and $C$ depend only on $d$ and the ratio $\tfrac{a_{\max}}{a_{\min}}$. With this at hand we let now $r \in (1,\infty)$ be such that $\tfrac1r + \tfrac1{2+\epsilon} = \tfrac12$. We then estimate
    \begin{align*}
      |\frakB_1[\Pi_h \fraka - \fraka](v,\hat e)| &\leq \| \Pi_h \fraka - \fraka \|_{L^r(\Omega)} \| \GRAD v \|_{\bL^{2+\epsilon}(\Omega)} \| \GRAD \hat e \|_{\bL^2(\Omega)} \\
        &\leq C \| \Pi_h \fraka - \fraka \|_{L^r(\Omega)}^2 \| \GRAD v \|_{\bL^{2+\epsilon}(\Omega)}^2 + \eta \| \GRAD \hat e \|_{\bL^2(\Omega)}^2.
    \end{align*}
    We may then choose $\eta$ as small as needed to absorb the second term on the left hand side of our main estimate \eqref{Eq: mainPerturbed}. This fixes the constant $C$. We conclude by recalling \eqref{eq:CoeffLrConvConductivity}, which shows that the remaining term vanishes in the limit.

    \item \textbf{The case $s \in (0,1):$}     
    Let now $(\fraka,v) \in \calZ^{s}$ and, at first, attempt to proceed to estimate as before, \ie by using a higher integrability result. Thus, assuming, for the time being, that there is $\epsilon >0$ such that $v \in W^{s,2+\epsilon}(\Omega_R)$ we estimate
    \begin{multline*}
      \left|\frakB_s[\Pi_h \fraka - \fraka](v,\hat e) \right| \lesssim \\ \iint_{\calD_R} \left|\frac{\Pi_h \frakA(x,y) - \frakA(x,y)}{|x-y|^{\tfrac{\epsilon n}{2(2+\epsilon)}}}\right| \left| \frac{v(x)-v(y)}{|x-y|^{\tfrac{n}{2+\epsilon}+s}} \right| \left| \frac{\hat e(x)- \hat e(y)}{|x-y|^{n/2+s}} \right| \diff x \diff y,
    \end{multline*}
    where we used the obvious notation $\Pi_h \frakA(x,y) = \tfrac12 \left( \Pi_h \fraka(x) + \Pi_h \fraka(y) \right)$. If we were to apply H\"older's inequality then, the estimate reduces to proving the convergence to zero of
    \[
      \iint_{\calD_R} \frac{ |\Pi_h \frakA(x,y) - \frakA(x,y)|^{\tfrac{2(2+\epsilon)}\epsilon}}{|x-y|^n} \diff x \diff y.
    \]
    However, due to the singular factor $|x-y|^{-n}$ this expression may not even be bounded! Instead, we argue as follows. Let $\tilde{f}, \tilde{v}$ denote the extensions by zero of $f$ and $v$ respectively. We then see that, because of the choice of domain of integration and the exterior condition in \eqref{eq:BLFormsConductivity}, the function $\tilde v \in H^s(\R^n)$ satisfies
    \[
      \gamma_{s,n,2}\iint_{\R^{2n} } \frakA(x,y) \frac{\tilde{v}(x)-\tilde{v}(y)}{|x-y|^s}  \frac{\varphi(x)-\varphi(y)}{|x-y|^s} \frac{ \diff x \diff y }{|x-y|^n} = \int_{\R^n} \tilde f(x) \varphi(x) \diff x
    \]
    for all $\varphi \in C_0^\infty(\R^n)$. We may then invoke the higher intregrability \emph{and} differentiability result in \cite[Theorem 1.1]{MR3336922} to assert that there are $\epsilon,\delta>0$ such that $\tilde v \in W^{s+\delta,2+\epsilon}_{\mathrm{loc}}(\R^n)$, and as a consequence $v \in W^{s+\delta,2+\epsilon}(\Omega_R)$. We may then proceed as follows. Set $\varepsilon = \min\{ \epsilon,\delta\}$ and $\tfrac1r + \tfrac1{2+\varepsilon} = \tfrac12$, then
    \begin{multline*}
      \left|\frakB_s[\Pi_h \fraka - \fraka](v,\hat e) \right|  \lesssim \\ \iint_{\calD_R} \left|\frac{\Pi_h \frakA(x,y) - \frakA(x,y)}{|x-y|^{\tfrac{n}r -\varepsilon}}\right| \left| \frac{v(x)-v(y)}{|x-y|^{\tfrac{n}{2+\varepsilon}+(s+\varepsilon)}} \right| \left| \frac{\hat e(x)- \hat e(y)}{|x-y|^{n/2+s}} \right| \diff x \diff y \\
      \lesssim \left(
      \iint_{\calD_R} \frac{ |\Pi_h \frakA(x,y) - \frakA(x,y)|^r} {|x-y|^{n-\varepsilon r } } \diff x \diff y \right)^{1/r}
          | v |_{W^{s+\varepsilon,2+\varepsilon}(\Omega_R)} |\hat e |_{\widetilde{H}^s(\Omega)}.
    \end{multline*}
    Since $n-\varepsilon r < n$ we have $|x-y|^{-n+\varepsilon r} \in L^1(\calD_R)$. In addition, $\fraka, \Pi_h \fraka \in \calH$, so that
    \[
      \frac{ |\Pi_h \frakA(x,y) - \frakA(x,y)|^r} {|x-y|^{n-\varepsilon r } } \leq \frac{(2a_{\max})^r}{|x-y|^{n-\varepsilon r } } \in L^1(\calD_R).
    \]
    Finally, owing to \eqref{eq:CoeffLrConvConductivity}, we have that (up to sub-sequences) $\Pi_h \frakA \to \frakA$ \mae in $\calD_R$. We may then apply the dominated convergence theorem to assert that this term goes to zero.
  \end{itemize}

  We comment that the, rather useful, higher smoothness result of \cite[Theorem 1.1]{MR3336922} is a feature exclusive to nonlocal problems; this is explored in \cite{MR3552254,MR3907738,MR3896636,MR3339179}.

\end{itemize}

Having verified all the assumptions regarding the discretization, it is a matter of translating the results in Sections~\ref{subsub:hToZero} and \ref{subsub:GammaConvergenceDiscrAbs}.

\begin{theorem}[convergence as $h \downarrow0$]
Fix $s \in (0,1]$ and assume the family of pairs $\{(\ofraka_h ,\ou_h)\} \in \calH_h \times V_h$ solve the discrete version of \eqref{eq:ObjConductivity}--\eqref{eq:StateConductivity}. Then there is $(\ofraka,\ou) \in \calZ^s$ such that, as $h\downarrow0$, and up to sub-sequences, the following hold:
\begin{enumerate}
  \item $\ofraka_h \rightharpoonup^* \ofraka$ in $L^\infty(\Omega_R)$.
  \item The pair $(\ofraka,\ou)$ solves \eqref{eq:ObjConductivity}--\eqref{eq:StateConductivity}.
  \item $J(\ofraka_h,\ou_h) \to J(\ofraka,\ou)$.
  \item $\ou_h \to \ou$ in $\widetilde{H}^s(\Omega)$.
  \item $\ofraka_h \to \ofraka$ in $L^q(\Omega)$.
\end{enumerate}
\end{theorem}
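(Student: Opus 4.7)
The plan is to invoke the abstract convergence result of Theorem~\ref{them:LimHtoZeroAbs} and to then upgrade the conclusion $\phi(\ofraka_h)\to\phi(\ofraka)$ to strong convergence of the coefficients in $L^q(\Omega)$. All the work needed to apply the abstract theorem has already been done: Subsections~\ref{sub:AnalysisConductivity} and \ref{sub:DiscretizationConductivity} verify the structural assumptions \eqref{eq:S1}--\eqref{eq:2.24}, the discrete embedding \eqref{eq:DiscrEmbedding}, the approximation property \eqref{eq:C1}, the density property \eqref{eq:PhiAndProj}, and the convergence \eqref{eq:Assumption2.16} of the perturbed Galerkin projection (the last being the most delicate, but already established). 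With these identifications in place, items (1)--(4) of the statement are direct restatements of the corresponding items in Theorem~\ref{them:LimHtoZeroAbs}, recalling that, in the present setting, the metric $d$ on $\calH$ is the metric of weak-$*$ convergence in $L^\infty(\Omega_R)$ and $X_\delta = \widetilde H^s(\Omega)$.

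The only new point is item (5), i.e.\ the strong convergence $\ofraka_h \to \ofraka$ in $L^q(\Omega)$, which is not directly provided by the abstract framework. My plan is to combine two facts. First, Theorem~\ref{them:LimHtoZeroAbs} gives $\phi(\ofraka_h) \to \phi(\ofraka)$, which in the present setting reads
\[
  \int_\Omega \Lambda(x)\,|\ofraka_h(x)|^q\diff x \;\longrightarrow\; \int_\Omega \Lambda(x)\,|\ofraka(x)|^q\diff x.
\]
Second, since $\ofraka_h \rightharpoonup^* \ofraka$ in $L^\infty(\Omega_R)$ and $\Omega$ is bounded, we have $\ofraka_h \rightharpoonup \ofraka$ in $L^q(\Omega)$; equivalently, since $\Lambda$ is bounded from below and above, $\ofraka_h \rightharpoonup \ofraka$ in the weighted space $L^q_\Lambda(\Omega) := L^q(\Omega,\Lambda\diff x)$.

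For $q \in (1,\infty)$ the weighted space $L^q_\Lambda(\Omega)$ is uniformly convex, and hence possesses the Radon--Riesz (Kadec--Klee) property: weak convergence together with convergence of the norm implies strong convergence. Since $\lambda \leq \Lambda(x) \leq \|\Lambda\|_{L^\infty(\Omega)}$, the norms of $L^q_\Lambda(\Omega)$ and $L^q(\Omega)$ are equivalent, so the resulting strong convergence in $L^q_\Lambda(\Omega)$ transfers at once to strong convergence in $L^q(\Omega)$, which is item (5).

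The main obstacle is not in this theorem itself, as it is essentially a bookkeeping application of the abstract result plus a standard Radon--Riesz argument; the real work was already carried out in verifying \eqref{eq:Assumption2.16}, where the higher-integrability results of Meyers (for $s=1$) and \cite{MR3336922} (for $s\in(0,1)$) were needed to control the term $\frakB_s[\Pi_h\fraka - \fraka](v,\hat e)$ uniformly in $h$.
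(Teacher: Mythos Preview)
Your proposal is correct and takes essentially the same approach as the paper, which simply states that this is Theorem~\ref{them:LimHtoZeroAbs} adapted to the conductivity setting. You actually go further than the paper by spelling out the Radon--Riesz argument needed to upgrade the abstract conclusion $\phi(\ofraka_h)\to\phi(\ofraka)$ to the claimed strong convergence $\ofraka_h\to\ofraka$ in $L^q(\Omega)$; the paper leaves this step entirely implicit.
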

\begin{proof}
  This is Theorem~\ref{them:LimHtoZeroAbs} adapted to our setting.
\end{proof}

\begin{theorem}[convergence of minimizers]
Fix $h \in (0,h_0)$. Given a family of discrete optimal design coefficients $\{\ofraka_{s,h}\}_{s \in (0,1)} \subset \calH_h$, there is $\ofraka_h \in \argmin_{\fraka_h \in \calH_h} r_h^1(\fraka_h)$ such that, up to sub-sequences, $\ofraka_{s,h} \rightharpoonup^* \ofraka_h$ as $s \uparrow 1$. Moreover,
\[
  \lim_{s\uparrow1} r_h^s (\ofraka_{s,h}) = r_h^1(\ofraka_h).
\]
Moreover, setting $\ou_{s,h}= T_h^s(\ofraka_{s,h})$ and $\ou_h = T_h^1(\ofraka_h)$, we also obtain that
\[
  \lim_{s\uparrow1} \| \ou_h - \ou_{s,h} \|_{\widetilde{H}^s(\Omega)} = 0.
\]
In addition $\ou_{s,h} \to \ou_h$ in $L^2(\Omega)$. Finally, $\ofraka_{s,h} \to \ofraka_h$ in $L^q(\Omega)$.
\end{theorem}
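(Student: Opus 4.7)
The plan is to observe that this result is precisely the specialization of Corollary~\ref{col:LimDelta0h>0} to the nonlocal conductivity setting developed in Sections~\ref{sub:SetupConductivity}--\ref{sub:DiscretizationConductivity}. Since we have already verified every structural assumption of Section~\ref{sub:AssumeAbstract} (uniform embedding, asymptotic compactness, strong continuity, uniform boundedness, uniform coercivity, parametric continuity, lower semicontinuity) as well as the discretization assumptions of Section~\ref{subsub:AssumptionsOnDiscrAbs} (discrete embedding, approximation property, density, perturbed Galerkin projection), the abstract corollary applies verbatim. Thus the first step is simply to invoke it: we obtain the existence of $\ofraka_h \in \argmin_{\fraka_h \in \calH_h} r_h^1(\fraka_h)$, the convergence $\ofraka_{s,h} \overset{d}{\to}\ofraka_h$ (which, by definition of the metric $d$, is exactly weak-$*$ convergence in $L^\infty(\Omega_R)$), the convergence of reduced costs, the convergence $\lim_{s\uparrow1}\| \ou_h - \ou_{s,h} \|_{\widetilde H^s(\Omega)} = 0$, and the convergence $\ou_{s,h} \to \ou_h$ in $X_1 = L^2(\Omega)$.

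The only item that is not directly supplied by Corollary~\ref{col:LimDelta0h>0} is the strong convergence $\ofraka_{s,h} \to \ofraka_h$ in $L^q(\Omega)$. The key observation here is that, because $h\in(0,h_0)$ is fixed, the set $\calH_h$ lies entirely in the finite-dimensional space $\calL_0^0(\Triang_h)$ of functions that are piecewise constant on $\Triang_h$. On such a finite-dimensional space all Hausdorff vector topologies coincide; in particular, weak-$*$ convergence in $L^\infty(\Omega)$ is equivalent to norm convergence in any $L^q(\Omega)$ for $q\in[1,\infty)$. Hence the weak-$*$ convergence $\ofraka_{s,h}\rightharpoonup^* \ofraka_h$ automatically upgrades to $\ofraka_{s,h}\to\ofraka_h$ in $L^q(\Omega)$.

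There is no genuine obstacle to this proof: the technical work was done in Sections~\ref{sub:AnalysisConductivity} and~\ref{sub:DiscretizationConductivity}, and the current statement is essentially a dictionary translation. If anything, the single mildly delicate point is making sure that the metric $d$ appearing in the abstract corollary is correctly identified with weak-$*$ convergence in $L^\infty(\Omega_R)$ on our specific $\calH$, which was established in Section~\ref{sub:SetupConductivity} by invoking Banach--Alaoglu together with metrizability on bounded subsets of the dual of the separable space $L^1(\Omega_R)$. Accordingly, the proof can be stated in a single line, exactly as the authors do for similar translation results in this section.
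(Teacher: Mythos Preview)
Your proposal is correct and matches the paper's approach: the paper's proof is the single line ``This is Corollary~\ref{col:LimDelta0h>0},'' followed by the remark that finite-dimensionality of the discrete spaces makes several statements easier to obtain directly. Your explicit use of finite-dimensionality of $\calL_0^0(\Triang_h)$ to upgrade weak-$*$ convergence of $\ofraka_{s,h}$ to strong $L^q(\Omega)$ convergence is exactly what that remark is pointing at.
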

\begin{proof}
This is Corollary~\ref{col:LimDelta0h>0}. Notice that, since we are dealing with finite dimensional spaces, many of the statements here may be easier to obtain independently.
\end{proof}

\subsection{Asymptotic compatibility}
\label{sub:ACConductivity}

We are now finally ready to verify the conditions that guarantee asymptotic compatibility of our discretization.

\begin{itemize}
  \item \textbf{Asymptotic approximation property:} Since, for every $s \in (0,1]$, we have $X_{s,h} = V_h$ it suffices, again, to set as $v_k$ the Scott-Zhang or Chen-Nochetto interpolant of the given function $v \in H^1_0(\Omega)$.

  \item \textbf{Asymptotic compatibility of state equations:} Let $\fraka \in \calH$ and $u = T^1(\fraka) \in H^1_0(\Omega)$. Assume the sequence $\{(s_k,h_k)\}_{k=1}^\infty \subset (0,1) \times (0, h_0)$ is such that, as $k \uparrow \infty$, we have $(s_k,h_k) \to (1,0)$. Let now $k \in \N$ and, for simplicity, set $\fraka_k = \Pi_{h_k} \fraka$ and $u_k = T_{h_k}^{s_k}(\fraka_k) = \hat{\calG}_{s_k,h_k}[\fraka]u$. We must show that, as $k \uparrow \infty$,
  \[
    \| u-u_k \|_{L^2(\Omega)} \to 0.
  \]

  We will show this indirectly by observing the following. An a priori estimate on $u_k$ guarantees that
  \[
    \| u_k \|_{\widetilde{H}^{s_k}(\Omega)} \lesssim \| f \|_{L^2(\Omega)},
  \]
  with a constant independent of $k$. The compactness result of \cite[Theorem 4]{MR3586796} then guarantees the existence of $w \in H^1_0(\Omega)$ such that $u_k \to w$ in $L^2(\Omega)$. We will then show $u=w$, by showing that $w$ minimizes the energy $E_1[\fraka]$ on $H^1_0(\Omega)$. If this is the case, we must necessarily have $u=w$, and the desired asymptotic compatibility will follow.

  For $k \in \N$ we define the Borel measures $\nu_k$ and $\nu$ as
  \[
    \nu_k(S) = \gamma_{s_k,n,2} \int_S \int_{\Omega_R} \frac{|u_k(x) -u_k(y)|^2}{|x-y|^{2s_k}} \frac{\diff y \diff x}{|x-y|^n},
    \qquad
    \nu(S) = \int_S |\GRAD w(x)|^2 \diff x,
  \]
  for any measurable $S \subset \Omega$. Owing to \cite{MR2041005}, see also \cite{MR4201703}, we have
  \[
    \nu(S) \leq \liminf_{k \uparrow \infty} \nu_k(S).
  \]
  Recall now that $\fraka_k \to \fraka$ in any $L^q(\Omega)$ for $q\in (1, \infty)$ so that, by passing to a sub-sequence, we have that $\fraka_k \to \fraka$ almost everywhere. The generalized Fatou lemma presented in \cite[Lemma 2.2]{MR705462} and \cite[Theorem 2.2.(i)]{MR1768500} then implies
  \[
    \int_{\Omega_R} \fraka(x) \diff \nu(x) \leq \liminf_{k \uparrow \infty} \int_{\Omega_R} \fraka_k(x) \diff \nu_k(x).
  \]
  Observe now that
  \[
    \int_{\Omega_R} \fraka(x) \diff \nu(x) = \int_\Omega \fraka(x) |\GRAD w(x)|^2 \diff x,
  \]
  and
  \[
    \int_{\Omega_R} \fraka_k(x) \diff \nu_k(x) = \int_{\Omega_R} \fraka_k(x) \int_{\Omega_R} \frac{|u_k(x) -u_k(y)|^2}{|x-y|^{2s_k}} \frac{\diff y \diff x}{|x-y|^n} = \frakB_{s_k}[\fraka_k](u_k,u_k),
  \]
  where the last equality is obtained by symmetrization. In conclusion,
  \begin{equation}
  \label{eq:Ponce}
    \int_\Omega \fraka(x) |\GRAD w(x)|^2 \diff x \leq \liminf_{k \uparrow \infty} \frakB_{s_k}[\fraka_k](u_k,u_k),
  \end{equation}
  and, since $u_k \to w$ in $L^2(\Omega)$,
  \[
    E_0[\fraka](w) \leq \liminf_{k \uparrow \infty} E_{s_k}[\fraka_k](u_k) \leq \liminf_{k \uparrow \infty} E_{s_k}[\fraka_k](v_k),
  \]
  where $v_k \in V_{h_k}$ is arbitrary. Let now $v \in H^1_0(\Omega)$ be arbitrary and choose $v_k$ so that $v_k \to v$ in $H^1_0(\Omega)$. This can be achieved, as it was already mentioned, by using the Scott-Zhang or the Chen-Nochetto interpolant. Moreover, it implies that
  \[
    \liminf_{k \uparrow \infty} E_{s_k}[\fraka_k](v_k) = \frac12 \liminf_{k \uparrow \infty}\frakB_{s_k}[\fraka_k](v_k,v_k) - \int_\Omega f(x) v(x) \diff x.
  \]

  The next step is to observe that the quantity
  \[
    \interleave \varphi \interleave_k = \frakB_{s_k}[\fraka_k](\varphi,\varphi)^{1/2}
  \]
  defines a norm on $\widetilde{H}^{s_k}(\Omega)$. Thus,
  \[
    \interleave v_k\interleave_k \leq \interleave v \interleave_k + \interleave v - v_k\interleave_k.
  \]
  Now
  \[
    \interleave v \interleave_k^2 =  \int_{\Omega_R} \fraka_k(x)\left[  \gamma_{s_k,n,2}\int_{\Omega} \frac{ |v(x) - v(y)|^2 }{|x-y|^{n+2s_k} } \diff y \right]\diff x.
  \]
  Owing to \cite[Corollary 1]{MR3586796}
  \[
    \gamma_{s_k,n,2}\int_{\Omega} \frac{ |v(x) - v(y)|^2 }{|x-y|^{n+2s_k} } \diff y \to |\GRAD v(x)|^2
  \]
  in $L^1(\Omega)$, which combined with $\fraka_k \rightharpoonup^* \fraka$ in $L^\infty(\Omega)$ gives
  \[
    \lim_{k \uparrow \infty} \interleave v \interleave_k^2 = \int_\Omega \fraka(x) |\GRAD w(x)|^2 \diff x.
  \]
  Finally,
  \begin{align*}
    \interleave v - v_k \interleave_k^2 &= \gamma_{s_k,n,2}\iint_{\calD_R} \frakA_k(x,y) \frac{|(v-v_k)(x) - (v-v_k)(y)|^2}{|x-y|^2} \frac{ \diff x \diff y}{|x-y|^n} \\
    &\leq a_{\max} \gamma_{s_k,n,2} \iint_{\calD_R}\frac{|(v-v_k)(x) - (v-v_k)(y)|^2}{|x-y|^2} \frac{ \diff x \diff y}{|x-y|^n} \\
    &= a_{\max} |v-v_k|_{\widetilde{H}^{s_k}(\Omega)}^2 \leq a_{\max} C \| \GRAD (v-v_k) \|_{\bL^2(\Omega)}^2,
  \end{align*}
  where owing to \cite[Remark 5]{MR3586796} the constant $C$ is independent of $k$. In conclusion $\interleave v - v_k \interleave_k \to 0$ as $k \uparrow \infty$.

  In summary,
  \[
    E_0[\fraka](w) \leq \frac12 \liminf_{k \uparrow \infty}\frakB_{s_k}[\fraka_k](v_k,v_k) - \int_\Omega f(x) v(x) \diff x \leq E_0[\fraka](v),
  \]
  as desired.

\end{itemize}

We have now verified all the requisite assumptions for asymptotic compatibility.

\begin{theorem}[asymptotic compatibility]
\label{thm:ACConductivity}
Let $\{(\ofraka_{s,h},\ou_{s,h})\}_{s \in (0,1), h \in (0,h_0)}$ be a family of solutions to problem \eqref{eq:ObjConductivity}--\eqref{eq:StateConductivity}. This family is asymptotically compatible in the sense of Definition~\ref{def:ACAbs}. Moreover, we additionally have,
\[
  \lim_{k \uparrow \infty} J(\ofraka_{s_k,h_k},\ou_{s_k,h_k}) = J(\ofraka,\ou),
\]
and
\[
  \lim_{k\uparrow \infty} \| \ou - \ou_{s_k,h_k} \|_{\widetilde{H}^{s_k}(\Omega)} = 0.
\]
\end{theorem}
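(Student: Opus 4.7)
The plan is essentially to invoke the abstract asymptotic compatibility result, Theorem~\ref{thm:ACAbs}: the point of Sections~\ref{sub:AnalysisConductivity}--\ref{sub:ACConductivity} has been to reduce the present statement to a direct application of that theorem, so the proof should amount to a short ``collection'' argument in which each hypothesis of Theorem~\ref{thm:ACAbs} is matched to a verification already carried out in this section.

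First I would remind the reader of the correspondence set up in Section~\ref{sub:SetupConductivity}: the abstract parameter $\delta$ corresponds to $1-s$, so $\delta \downarrow 0$ becomes $s \uparrow 1$, with $X_1 = H^1_0(\Omega)$, $X_s = \widetilde{H}^s(\Omega)$ for $s \in (0,1)$, and $X_0 = L^2(\Omega)$, equipped with the fractional seminorms normalized by $\gamma_{s,n,2}$. The design space is $\calH$ together with the weak-$*$ metric $d$; the bilinear forms are the $\frakB_s[\fraka]$ of \eqref{eq:BLFormsConductivity}; the discrete state spaces are $X_{s,h} = V_h$ with discrete design set $\calH_h = \calL^0_0(\Triang_h) \cap \calH$; the map $\Pi_h$ is the $L^2$-projection onto $\calL^{-1}_0(\Triang_h)$; and the objective is the one in \eqref{eq:ObjConductivity}.

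Second, I would systematically match each hypothesis of Theorem~\ref{thm:ACAbs} to its verification. The uniform embedding \eqref{eq:S1} and asymptotic compactness \eqref{eq:S2}, together with the bilinear form conditions \eqref{eq:E1}--\eqref{eq:2.24}, were verified at the start of Section~\ref{sub:AnalysisConductivity}, the nontrivial ingredient being the weak-$*$ convergence lift of Lemma~\ref{lem:aimpliesA}. The discrete embedding \eqref{eq:DiscrEmbedding}, the approximation property \eqref{eq:C1} (via the Scott--Zhang or Chen--Nochetto interpolant), the density property \eqref{eq:PhiAndProj} of $\Pi_h$, and the perturbed Galerkin projection estimate \eqref{eq:Assumption2.16} (whose proof splits into the cases $s=1$, using the Meyers-type higher integrability, and $s \in (0,1)$, using the higher integrability and differentiability result of \cite{MR3336922}) were verified in Section~\ref{sub:DiscretizationConductivity}. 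Finally, the asymptotic approximation property and the asymptotic compatibility of the state equations \eqref{eq:StateIsAC} were verified at the start of the present Section~\ref{sub:ACConductivity}.

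Third, with all of these hypotheses in place, I would invoke Theorem~\ref{thm:ACAbs} directly to obtain both the asymptotic compatibility in the sense of Definition~\ref{def:ACAbs} and the additional claims, namely $J(\ofraka_{s_k,h_k},\ou_{s_k,h_k}) \to J(\ofraka,\ou)$ and $\|\ou - \ou_{s_k,h_k}\|_{\widetilde{H}^{s_k}(\Omega)} \to 0$. I do not expect a genuine obstacle at this final step, since the real work has been done in the verifications above; the most delicate input is \eqref{eq:StateIsAC}, whose argument combines a generalized Fatou lemma with the Ponce-type lower semicontinuity estimate \eqref{eq:Ponce} and the pointwise/$L^1$ convergence of the normalized seminorm densities $\gamma_{s,n,2}\int |v(x)-v(y)|^2/|x-y|^{n+2s}\,\diff y$ toward $|\GRAD v|^2$, and this is already in hand.
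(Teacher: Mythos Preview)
Your proposal is correct and matches the paper's approach exactly: the paper's own proof is the single sentence ``This is Theorem~\ref{thm:ACAbs} adapted to our setting,'' and your write-up is simply a more explicit version of that, cataloguing which verifications feed into which abstract hypotheses.
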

\begin{proof}
  This is Theorem~\ref{thm:ACAbs} adapted to our setting.
\end{proof}

\section{Optimal design in peridynamics}
\label{sec:Peridynamics}

As a second application of the general framework developed in Section~\ref{sec:AbstractSetting} we study an optimal design problem in peridynamics. Many of the ideas and developments follow those of the conductivity problem of Section~\ref{sec:Conductivity}. Thus, we shall be very concise.

\subsection{Setup}
\label{sub:SetupPeridynamics}

We now describe our problem. As before, $s \in (0,1]$ and $\delta = 1-s$.

\begin{itemize}
  \item We keep $0<a_{\min} \leq a_{\max}$, and
  \[
    \calH = \left\{ a \in L^\infty(\Omega_R) \ \middle| \ a_{\min} \leq a(x) \leq a_{\max} \mae \Omega \right\}.
  \]
  The metric $d$, as before, is the metrization of weak-* convergence over bounded subsets of $L^\infty(\Omega_R)$.

  \item In this case $X_1 = \bH^1_0(\Omega)$, $X_s= \widetilde{\bH}^{s}(\Omega)$ for $s \in (0,1)$, and $X_0 = \bL^2(\Omega)$. We keep the normalization constants so that this scale satisfies a uniform embedding property.

  \item The bilinear forms that define the state equations are:
  \begin{equation}
  \label{eq:BLFormsPeridyn}
    \begin{aligned}
      \frakB_{s}[\fraka](\bv,\bw) &= \gamma_{s,n,2}\iint_{\calD_R} \frakA(x,y) \frac{ \Dxy\bv(x,y) }{|x-y|^s} \frac{\Dxy \bw(x,y) }{|x-y|^s} \frac{\diff x \diff y}{|x-y|^n}, \\
      \frakB_1[\fraka](\bv,\bw) &= \frac1{d+2}\int_\Omega \fraka(x) \left( 2\beps(\bw)(x) \colon \beps(\bv)(x) + \DIV \bw(x) \DIV \bv(x) \right)\diff x, \\
    \end{aligned}
  \end{equation}
  where $\beps$ denotes the symmetric gradient, $\colon$ is the Frobenius inner product, and
  \[
    \Dxy \bw(x,y) = \left( \bw(x) - \bw(y) \right) \cdot \frac{x-y}{|x-y|}
  \]
  is the projected difference. As before, if $\fraka \in \calH$ and $x,y \in \Omega_R$, we set $\frakA(x,y) = \tfrac12 \left( \fraka(x) + \fraka(y) \right)$.
   
  \item As before, for  $q \in (1,\infty)$ and strictly positive $\Lambda \in L^\infty(\Omega)$ we define
  \[
    \phi(\fraka) = \int_\Omega \Lambda(x) |\fraka(x)|^q \diff x.
  \]
  We recall that $\phi$ is lower semicontinuous with respect to the metric $d$.
\end{itemize}

The optimal design problem in peridynamics is then stated as follows: Let $s \in (0,1]$, $q \in (1,\infty)$, and $\bef \in \bL^2(\Omega)$. We seek to find a pair $(\fraka,\bu) \in \calH \times \widetilde{\bH}^s(\Omega)$ that minimizes
\begin{equation}
\label{eq:ObjPeridyn}
  J(\fraka,u) = \int_\Omega \bef(x) \cdot \bu(x) \diff x + \int_\Omega \Lambda(x) |\fraka(x) |^q \diff x,
\end{equation}
subject to
\begin{equation}
\label{eq:StatePeridyn}
  \frakB_{s}[\fraka](\bu,\bv) = \int_\Omega \bef(x) \cdot \bv(x) \diff x, \qquad \forall \bv \in \widetilde{\bH}^s(\Omega).
\end{equation}
Here, for uniformity, we have set $\widetilde{\bH}^1(\Omega) = \bH^1_0(\Omega)$.

\subsection{Verification of structural assumptions and results}
\label{sub:AnalysisPeridynamics}

We now, once again, verify the assumptions of our abstract framework.

\begin{itemize}
  \item \textbf{Uniform embedding:} This is as before.

  \item \textbf{Asymptotic compactness:} This is, again, as in the conductivity case.

  \item \textbf{Strong continuity:} This can be obtained \emph{mutatis mutandis} the arguments for the conductivity case.

  \item \textbf{Uniform boundedness and coercivity:} The argument for uniform boundedness repeats the computations of the conductivity case. However, the argument for uniform coercivity needs to be supplemented with a \emph{uniform} (in $s$) fractional Korn's inequality that bounds $|\bv|^{2}_{ \widetilde{\bH}^s(\Omega)}$ by $ \frakB_{s}[1](\bv,\bv) $. The issue here is that the bilinear form depends only on $\bv$ via the projected difference. The fractional  Korn's inequality  proved in \cite{Mengesha2019, ScottMengesha2019b} does not make clear the dependence of constants in $s$. Let us then clarify the dependence of the constant in \cite[Theorem 4.2 ]{Mengesha2019} in $s$. To do so, we denote by $\widehat\bv$ the Fourier transform of $\bv$. Following \cite{Mengesha2019}, we have 
  \begin{equation*}
  \begin{aligned}
    \frakB_{s}[1](\bv,\bv)  &= \gamma_{n, s, 2} \iint_{\mathbb{R}^{n}}  \frac{ |\Dxy\bv(x,y)|^{2} }{|x-y|^{n+2s}} \diff x \diff y \\
    & =\gamma_{n, s, 2}   \int_{ \mathbb{R}^{n} }  \frac{1}{ |h|^{n+2s}}  \int_{\mathbb{R}^{n}} \left| (\bv(x + h) - \bv(x) ) \cdot \frac{h}{|h|}  \right|^{2} \diff x \diff h\\
    & = 2\gamma_{n, s, 2}   \int_{ \mathbb{R}^{n} } \frac{1}{ |h|^{n+2s}} \int_{\mathbb{R}^{n}} \left |\widehat{\bv}(\xi)\cdot  \frac{h}{|h|}\right|^2 (1-\cos (2\pi \xi\cdot h)) \diff \xi \diff h\\
    &\geq \gamma_{n, s, 2}  c(n) \Gamma(1-s) \int_{ \mathbb{R}^{n}} |\xi|^{2s}|\widehat{\bv}(\xi)|^{2} \diff \xi \\
    &=\gamma_{n, s, 2} c(n) \int_{ \mathbb{R}^{n} }  \frac{1}{ |h|^{n+2s}}  \int_{\mathbb{R}^{n}} \left| \bv(x + h) - \bv(x) \right|^{2} \diff x \diff h \\
    &=  c(n) |\bv|^{2}_{{\bH}^{s}(\mathbb{R}^{d})} = c(n)  | \bv|^{2}_{ \widetilde{\bH}^s(\Omega)},
  \end{aligned}
  \end{equation*}
  where by $c(n)$ we indicated a constant that depends on the spatial dimension $n$ only.

  \item \textbf{Parametric continuity:} Lengthy, but trivial, computations that mimic the conductivity case imply this result. For this, the explicit values of the normalization constants are important; see \cite[Appendix A]{MR3424902}.

  \item \textbf{Lower semicontinuity:} See \cite[Theorem 8]{MR4629861}.
\end{itemize}

Since these assumptions are verified we immediately obtain a series of results regarding our problem.

\begin{lemma}[uniform well posedness]
For every $s \in (0,1]$ and each coefficient $\fraka \in \calH$ there is a unique $\bu_s \in \widetilde{\bH}^s(\Omega)$ that solves \eqref{eq:StatePeridyn}. In addition, this solution satisfies
\[
  |\bu_s|_{\widetilde{\bH}^s(\Omega)} \lesssim \| \bef \|_{\bL^2(\Omega)},
\]
where the implicit constant is independent of $s$.
\end{lemma}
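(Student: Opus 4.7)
The statement is the direct peridynamic analog of the conductivity well-posedness lemma, so my plan is simply to invoke the abstract Lemma~\ref{lem:StatesAreOKAbs} once the relevant structural assumptions have been checked in the peridynamic setting. Since Section~\ref{sub:AnalysisPeridynamics} has already verified the uniform embedding \eqref{eq:S1}, the uniform boundedness \eqref{eq:E2} with constant $A = a_{\max}$, and the uniform coercivity \eqref{eq:E3} with $\alpha = c(n) a_{\min}$ (via the $s$-uniform fractional Korn inequality carried out there), nothing new is required beyond quoting the abstract result.

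Concretely, I would proceed as follows. For fixed $s \in (0,1]$ and $\fraka \in \calH$, the bilinear form $\frakB_s[\fraka]$ is symmetric on $\widetilde{\bH}^s(\Omega)$, and by the verified \eqref{eq:E2}--\eqref{eq:E3} it is bounded above by $a_{\max} |\bv|^2_{\widetilde{\bH}^s(\Omega)}$ and below by $c(n) a_{\min} |\bv|^2_{\widetilde{\bH}^s(\Omega)}$, with both constants independent of $s$. Combined with the uniform embedding, which makes $\bef \mapsto \int_\Omega \bef \cdot \bv\,\diff x$ a bounded linear functional on $\widetilde{\bH}^s(\Omega)$ with norm bounded by $M \| \bef \|_{\bL^2(\Omega)}$ uniformly in $s$, the Lax--Milgram lemma yields a unique solution $\bu_s \in \widetilde{\bH}^s(\Omega)$ to \eqref{eq:StatePeridyn}.

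For the estimate, testing \eqref{eq:StatePeridyn} with $\bv = \bu_s$ and using coercivity and the uniform embedding gives
\[
  c(n) a_{\min} | \bu_s |^2_{\widetilde{\bH}^s(\Omega)} \leq \frakB_s[\fraka](\bu_s,\bu_s) = \int_\Omega \bef \cdot \bu_s \diff x \leq \| \bef \|_{\bL^2(\Omega)} \| \bu_s \|_{\bL^2(\Omega)} \leq M \| \bef \|_{\bL^2(\Omega)} | \bu_s |_{\widetilde{\bH}^s(\Omega)},
\]
from which the bound $| \bu_s |_{\widetilde{\bH}^s(\Omega)} \lesssim \| \bef \|_{\bL^2(\Omega)}$ follows with an implicit constant $M/(c(n) a_{\min})$ that depends only on $n$, $a_{\min}$, and the universal embedding constant, and hence is independent of $s$.

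There is essentially no obstacle here; the only delicate point, already handled in Section~\ref{sub:AnalysisPeridynamics}, is the $s$-uniformity of the coercivity constant. Unlike the conductivity case, peridynamic coercivity requires a Korn-type inequality with a constant whose dependence on $s$ is made explicit via the Fourier computation recorded there, culminating in the bound by $c(n) | \bv|^{2}_{ \widetilde{\bH}^s(\Omega)}$ with $c(n)$ dimension-dependent only. With that in hand, the lemma is an immediate consequence of Lemma~\ref{lem:StatesAreOKAbs}, and so the proof reduces to a one-line invocation.
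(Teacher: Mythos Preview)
Your proposal is correct and matches the paper's approach exactly: the paper's proof is the single sentence ``This is Lemma~\ref{lem:StatesAreOKAbs} in the present setting,'' relying on the structural assumptions already verified in Section~\ref{sub:AnalysisPeridynamics}. Your additional spelling-out of the Lax--Milgram argument and the explicit estimate is just an expanded version of what that invocation entails.
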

\begin{proof}
This is Lemma~\ref{lem:StatesAreOKAbs} in the present setting.
\end{proof}

As before, the previous result allows us to define the design-to-state mapping $\bT^s : \calH \to \widetilde{\bH}^s(\Omega)$ and the reduced costs $\{r^s\}_{s \in (0,1]}$.

\begin{theorem}[existence]
For every $s\in (0,1]$, problem \eqref{eq:ObjPeridyn}--\eqref{eq:StatePeridyn} has a solution.
\end{theorem}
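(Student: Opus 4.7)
The plan is to verify that the peridynamics problem \eqref{eq:ObjPeridyn}--\eqref{eq:StatePeridyn} fits the abstract framework of Section~\ref{sec:AbstractSetting} and then simply to invoke Theorem~\ref{thm:ExistenceAbs}, with the parameter identification $\delta = 1-s$. Most of the work has already been carried out in Section~\ref{sub:AnalysisPeridynamics}: the structural assumptions \eqref{eq:S1}--\eqref{eq:S2} on the scale $\{\widetilde{\bH}^s(\Omega)\}_{s \in (0,1]}$, and \eqref{eq:E1}--\eqref{eq:2.24} on the family $\{\frakB_s[\fraka]\}$, have all been checked there. What remains is essentially bookkeeping: the design space $\calH$ with the weak-$*$ metric $d$ is compact and metrizable, exactly as in Section~\ref{sub:SetupConductivity}; and the objective $J$ in \eqref{eq:ObjPeridyn} is of the regularized compliance form \eqref{eq:ObjectiveAbs}, with $\phi(\fraka) = \int_\Omega \Lambda(x)|\fraka(x)|^q \diff x$ non-negative, convex, and lower semicontinuous in the metric $d$ by the same argument used for the conductivity case.

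The one genuinely new point, and the main obstacle compared with the scalar conductivity setting, is uniform coercivity: because $\frakB_s[\fraka]$ depends on the displacement only through the projected difference $\Dxy \bv$, coercivity on $\widetilde{\bH}^s(\Omega)$ requires a fractional Korn inequality whose constant is independent of $s$. This was established in the preceding subsection via the Fourier representation of the bilinear form, building on \cite{Mengesha2019}. With that ingredient in hand, a direct application of Theorem~\ref{thm:ExistenceAbs} furnishes, for every $s \in (0,1]$, a minimizing pair $(\ofraka_s, \ou_s) \in \calH \times \widetilde{\bH}^s(\Omega)$ for \eqref{eq:ObjPeridyn}--\eqref{eq:StatePeridyn}.
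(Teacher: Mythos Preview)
Your proposal is correct and follows exactly the paper's own approach: both simply invoke Theorem~\ref{thm:ExistenceAbs} after recording that the structural assumptions were verified in Section~\ref{sub:AnalysisPeridynamics}. One minor remark: for existence at a \emph{fixed} $s$ the uniformity in $s$ of the fractional Korn constant is not actually needed (only coercivity for that particular $s$), so your highlighting of this point, while not wrong within the abstract framework as stated, slightly overstates what the existence argument itself requires.
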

\begin{proof}
This specializes Theorem~\ref{thm:ExistenceAbs}.
\end{proof}

We now show the variational convergence of these problems as $s \uparrow 1$.

\begin{theorem}[convergence of minimizers]
Let $\{\ofraka_s\}_{s \in (0,1)}$ be a family of minimizers of the reduced costs $\{r^s\}_{s \in (0,1)}$. In other words, the family of pairs $\{(\ofraka_s, \obu_s = \bT^s(\ofraka_s))\}_{s \in (0,1)}$ solve problem \eqref{eq:ObjPeridyn}--\eqref{eq:StatePeridyn}. Then, as $s \uparrow 1$,
\begin{itemize}
  \item The family $\{r^s\}_{s \in (0,1)}$ $\Gamma$-converges to $r^1$ in the metric induced by weak-* convergence in $L^\infty(\Omega)$ over bounded sets. Moreover, this family is equicoercive.

  \item There is $\ofraka_1 \in \calH$ such that, up to subsequences, $\ofraka_s \rightharpoonup^* \ofraka_1$ in $L^\infty(\Omega)$. Moreover,
  \[
    \lim_{s \uparrow 1} r^s(\ofraka_s) = r^1(\ofraka_1), \qquad \lim_{s\uparrow1} \| \obu_s - \obu_1 \|_{\widetilde{\bH}^s(\Omega)} = 0,
  \]
 and we have $\obu_s \to \obu_1$ in $\bL^2(\Omega)$. Finally, $\ofraka_s \to \ofraka_1$ in $L^q(\Omega)$ for every $q<\infty$, and $(\ofraka_1,\obu_1)$ is a solution to \eqref{eq:ObjPeridyn}--\eqref{eq:StatePeridyn} for $s = 1$.
\end{itemize}
\end{theorem}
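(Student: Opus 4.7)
My plan is to obtain this theorem as a direct specialization of the abstract results of Section~\ref{sec:AbstractSetting}, since the entirety of the preceding subsection was devoted to verifying the structural hypotheses \eqref{eq:S1}--\eqref{eq:2.24} in the peridynamic setting (making the identification $\delta = 1-s$, so that $s \uparrow 1$ plays the role of $\delta \downarrow 0$). I would first apply Theorem~\ref{thm:GconvAsDeltaTo0} verbatim to obtain the $\Gamma$-convergence of $\{r^s\}_{s \in (0,1)}$ to $r^1$ with respect to the metric $d$ on $\calH$, together with equicoercivity of the family; the latter is immediate from the compactness of $(\calH,d)$.

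Next, I would invoke Corollary~\ref{col:LimDelta0}. The compactness of $\calH$ yields a cluster point $\ofraka_1 \in \calH$ such that, along a subsequence, $\ofraka_s \overset{d}{\to} \ofraka_1$, i.e., $\ofraka_s \rightharpoonup^* \ofraka_1$ in $L^\infty(\Omega_R)$. The corollary also delivers, in one shot, the convergence $r^s(\ofraka_s) \to r^1(\ofraka_1)$, the fact that $(\ofraka_1,\obu_1)$ minimizes \eqref{eq:ObjPeridyn}--\eqref{eq:StatePeridyn} at $s=1$, the state convergence $\|\obu_s - \obu_1\|_{\widetilde{\bH}^s(\Omega)} \to 0$, and through the uniform embedding \eqref{eq:S1} the convergence $\obu_s \to \obu_1$ in $\bL^2(\Omega)$.

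The only item not contained in the abstract corollary is the enhanced $L^q(\Omega)$ convergence of coefficients \emph{for every} $q < \infty$, since the corollary only asserts $\phi(\ofraka_s) \to \phi(\ofraka_1)$. I would extract strong convergence in the following way. Fix the particular exponent $q \in (1,\infty)$ appearing in the definition of $\phi$, and view $L^q_\Lambda(\Omega)$ (weighted $L^q$ with positive bounded weight $\Lambda$) as a uniformly convex Banach space. The weak-$*$ convergence in $L^\infty$, together with the boundedness of $\Omega$, implies $\ofraka_s \rightharpoonup \ofraka_1$ in $L^q_\Lambda(\Omega)$; meanwhile $\phi(\ofraka_s) \to \phi(\ofraka_1)$ is precisely convergence of the $L^q_\Lambda$-norm. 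By the Radon--Riesz (Kadec--Klee) property, $\ofraka_s \to \ofraka_1$ strongly in $L^q_\Lambda(\Omega)$, hence in $L^q(\Omega)$. To pass from this single exponent to arbitrary $q' \in (1,\infty)$, I would use the uniform $L^\infty$ bound $\|\ofraka_s\|_{L^\infty(\Omega)} \leq a_{\max}$: for $q' > q$,
\[
  \|\ofraka_s - \ofraka_1\|_{L^{q'}(\Omega)}^{q'} \leq (2 a_{\max})^{q'-q} \|\ofraka_s - \ofraka_1\|_{L^q(\Omega)}^{q} \to 0,
\]
while for $1 \leq q' \leq q$ boundedness of $\Omega$ gives $L^q \hookrightarrow L^{q'}$ continuously.

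I do not foresee any genuine obstacle here; all the real work was carried out in Section~\ref{sub:AnalysisPeridynamics}, in particular the delicate verification of uniform coercivity via an $s$-independent fractional Korn inequality and the parametric continuity assumptions \eqref{eq:2.22}--\eqref{eq:2.23}. The present theorem is a matter of assembly plus the uniform-convexity upgrade described above.
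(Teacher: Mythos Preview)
Your proposal is correct and follows essentially the same approach as the paper, which simply writes ``This result combines Theorem~\ref{thm:GconvAsDeltaTo0} and Corollary~\ref{col:LimDelta0}.'' You have helpfully made explicit the Radon--Riesz step and the $L^\infty$-interpolation needed to upgrade $\phi(\ofraka_s)\to\phi(\ofraka_1)$ to strong $L^q$ convergence for all $q<\infty$, details the paper leaves to the reader.
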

\begin{proof}
This result combines Theorem~\ref{thm:GconvAsDeltaTo0} and Corollary~\ref{col:LimDelta0}.
\end{proof}

\subsection{Discretization and results}

As in the conductivity case, we set $X_{s,h} = \bV_h = V_h^n$ and $\calH_h = \calL_0^{-1} (\Triang_h) \cap \calH$. We now verify the assumptions.

\begin{itemize}
  \item \textbf{Discrete embedding and approximation property:} Same as the scalar case. Indeed, it suffices to use $n$-copies of a suitable interpolant.

  \item \textbf{Density:} Requires no changes, and the same observations as in the conductivity case apply.

  \item \textbf{Perturbed Galerkin projection:} One can follow, without requiring any change, the conductivity case up to the point where higher integrability and differentiability results are required. Once we reach that point, we again argue the cases $s=1$ and $s \in (0,1)$ separately.
  \begin{itemize}
    \item \textbf{The case $s=1$:} We need a Meyers-type higher integrability result for the system of elasticity. Results of this kind are plentiful in the literature. For instance, \cite{MR1607245} proves this for a nonlinear version of the Stokes problem; for stronlgy elliptic systems satisfying the Legendre-Hadamard condition, like linear elasticity, this is shown in \cite[Theorem 2]{MR417568}. For completeness and clarity, we provide a direct proof in Appendix~\ref{sec:Appendix}.

    \item \textbf{The case $s \in (0,1)$:} In this case one needs a higher integrability and differentiability results for the solution of the system of peridynamics. This is provided, for instance, in \cite[Remark 3.5]{MR3896636}.
  \end{itemize}
\end{itemize}

With all these assumptions being verified, all the pertinent results about discretization expediently follow.

\begin{theorem}[convergence as $h \downarrow0$]
Fix $s \in (0,1]$ and assume the family of pairs $\{(\ofraka_h ,\obu_h)\} \in \calH_h \times \bV_h$ solve the discrete version of \eqref{eq:ObjPeridyn}--\eqref{eq:StatePeridyn}. Then there is $(\ofraka,\obu) \in \calZ^s$ such that, as $h\downarrow0$, and up to sub-sequences, the following hold:
\begin{enumerate}
  \item $\ofraka_h \rightharpoonup^* \ofraka$ in $L^\infty(\Omega_R)$.
  \item The pair $(\ofraka,\obu)$ solves \eqref{eq:ObjPeridyn}--\eqref{eq:StatePeridyn}.
  \item $J(\ofraka_h,\obu_h) \to J(\ofraka,\obu)$.
  \item $\obu_h \to \obu$ in $\widetilde{\bH}^s(\Omega)$.
  \item $\ofraka_h \to \ofraka$ in $L^q(\Omega)$ for any $q \in [1, \infty)$.
\end{enumerate}
\end{theorem}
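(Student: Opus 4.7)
The plan is to invoke Theorem~\ref{them:LimHtoZeroAbs} directly. All of the work has in fact already been done: Section~\ref{sub:AnalysisPeridynamics} verifies the structural assumptions on the spaces and bilinear forms in the peridynamic setting, and the bullet-listed items immediately above verify the discretization assumptions \eqref{eq:DiscrEmbedding}--\eqref{eq:Assumption2.16}, relying on componentwise application of a scalar interpolant for the approximation property, the $L^2$-orthogonal projection onto piecewise constants for density, and the appropriate higher integrability/differentiability results (Meyers-type for the elasticity case $s=1$, and the nonlocal analogue of \cite{MR3896636} for $s \in (0,1)$) for the perturbed Galerkin projection. Items (1)--(4) are then transcriptions of the corresponding conclusions of Theorem~\ref{them:LimHtoZeroAbs}, so I would dispose of them in essentially a single sentence.

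The one piece of genuine work is item (5), where convergence in $L^q(\Omega)$ is claimed for \emph{every} exponent $q \in [1, \infty)$, whereas the abstract theorem only delivers convergence of $\phi(\ofraka_h) \to \phi(\ofraka)$. To bridge this gap I would proceed in two steps. First, fix the regularization exponent $q_0 > 1$ appearing in $\phi$. The convergence of $\phi$-values combined with the strict positivity of $\Lambda$ yields convergence of the weighted $L^{q_0}$-norms, and the sandwich $\lambda \leq \Lambda \leq \|\Lambda\|_{L^\infty(\Omega)}$ makes this equivalent to convergence of the unweighted $L^{q_0}$-norms; this, together with the weak convergence in $L^{q_0}(\Omega)$ that follows from item (1) via $L^{q_0/(q_0-1)}(\Omega) \subset L^1(\Omega)$, yields strong $L^{q_0}(\Omega)$-convergence through the Radon--Riesz property in the uniformly convex space $L^{q_0}(\Omega)$. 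Second, the uniform bound $\|\ofraka_h - \ofraka\|_{L^\infty(\Omega)} \leq 2 a_{\max}$ combined with the $L^{q_0}$-convergence just obtained allows an interpolation argument of exactly the form of \eqref{eq:CoeffLrConvConductivity} to extend the convergence to every $q \in [q_0, \infty)$; for $q \in [1, q_0]$ the boundedness of $\Omega$ yields the continuous embedding $L^{q_0}(\Omega) \hookrightarrow L^q(\Omega)$, so nothing further is required.

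There is no serious obstacle here. The only points worth a moment of care are keeping the two exponents (the fixed $q_0$ in the regularizer versus the arbitrary $q$ in the conclusion) distinct in the argument, and remembering that item (5), like the rest of the statement, is only asserted \emph{up to sub-sequences}---consistently with the non-uniqueness of optimal designs remarked after Theorem~\ref{thm:ExistenceAbs}.
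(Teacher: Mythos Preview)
Your approach matches the paper's exactly: its proof is the single line ``This is Theorem~\ref{them:LimHtoZeroAbs}.'' You are actually more careful than the paper in noticing that item~(5) needs a bridge from the abstract conclusion $\phi(\ofraka_h)\to\phi(\ofraka)$ to strong $L^q$-convergence for every $q\in[1,\infty)$, and your two-step plan (Radon--Riesz at the regularizer exponent, then interpolation with the uniform $L^\infty$ bound) is the right one.

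There is one slip. The sandwich $\lambda\leq\Lambda\leq\|\Lambda\|_{L^\infty(\Omega)}$ gives equivalence of the weighted and unweighted $L^{q_0}$-\emph{norms}, but equivalence of norms does not imply that convergence of weighted norm values forces convergence of unweighted norm values; mass can shift between regions where $\Lambda$ takes different values while keeping $\int_\Omega\Lambda|\ofraka_h|^{q_0}$ fixed. The fix is to run Radon--Riesz directly in the weighted space $L^{q_0}(\Omega,\Lambda\,\diff x)$, which is itself uniformly convex, and into which weak-* convergence in $L^\infty(\Omega)$ still yields weak convergence since $\Lambda\in L^\infty(\Omega)$. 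You then obtain strong convergence in the weighted space, and \emph{that} is equivalent (now legitimately, via the sandwich on the difference) to strong convergence in unweighted $L^{q_0}(\Omega)$; your interpolation step then extends this to all $q\in[1,\infty)$.
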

\begin{proof}
  This is Theorem~\ref{them:LimHtoZeroAbs}.
\end{proof}

\begin{theorem}[convergence of minimizers]
Fix $h \in (0,h_0)$. Given a family of discrete optimal design coefficients $\{\ofraka_{s,h}\}_{s \in (0,1)} \subset \calH_h$, there is $\ofraka_h \in \argmin_{\fraka_h \in \calH_h} r_h^1(\fraka_h)$ such that,  as $s \uparrow 1$ and up to sub-sequences, $\ofraka_{s,h} \rightharpoonup^* \ofraka_h$ in $L^\infty(\Omega)$. Moreover,
\[
  \lim_{s\uparrow1} r_h^s (\ofraka_{s,h}) = r_h^1(\ofraka_h),
\]
and setting $\obu_{s,h}= \bT_h^s(\ofraka_{s,h})$ and $\obu_h = \bT_h^1(\ofraka_h)$, we also obtain that
\[
  \lim_{s\uparrow1} \| \obu_h - \obu_{s,h} \|_{\widetilde{\bH}^s(\Omega)} = 0.
\]
In addition $\obu_{s,h} \to \obu_h$ in $\bL^2(\Omega)$. Finally, $\ofraka_{s,h} \to \ofraka_h$ in $L^q(\Omega)$.
\end{theorem}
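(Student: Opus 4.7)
The plan is to directly invoke the abstract Corollary~\ref{col:LimDelta0h>0}, specialized to the peridynamics setting of this section. Indeed, the entire point of Section~\ref{sub:AnalysisPeridynamics} and the discretization verification that precedes this theorem was to check, one by one, that the assumptions of Sections~\ref{sub:AssumeAbstract} and \ref{subsub:AssumptionsOnDiscrAbs} hold for the spaces $\{\widetilde{\bH}^s(\Omega)\}_{s \in [0,1]}$, the bilinear forms \eqref{eq:BLFormsPeridyn}, and the discretization pair $(\bV_h,\calH_h)$. Once this is granted, Theorem~\ref{thm:GconvAsDeltaTo0Discr} gives, for fixed $h \in (0,h_0)$, that $\{r_h^s\}_{s \in (0,1)}$ $\Gamma$-converges to $r_h^1$ with respect to the metric $d$ (the metrization of weak-$*$ convergence on the bounded set $\calH$), and is equicoercive.

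From here, the argument of Corollary~\ref{col:LimDelta0} applies \emph{mutatis mutandis}. Compactness of $\calH$ in $(\calH,d)$, together with the fact that $\calH_h = \calL_0^{-1}(\Triang_h) \cap \calH$ is weak-$*$ closed in $L^\infty(\Omega)$ (being a bounded subset of the finite-dimensional subspace $\calL_0^{-1}(\Triang_h)$), produces a cluster point $\ofraka_h \in \calH_h$ with $\ofraka_{s,h} \rightharpoonup^* \ofraka_h$ along a subsequence. The standard $\Gamma$-convergence conclusion (\cite[Theorem 13.3]{MR3821514}) then yields $r_h^s(\ofraka_{s,h}) \to r_h^1(\ofraka_h)$, and identifies $\ofraka_h$ as a minimizer of $r_h^1$ over $\calH_h$. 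Applying the limit-inf inequality to the state part of the cost together with the lower semicontinuity of $\phi$, combined with the identity $\phi(\ofraka_{s,h}) = r_h^s(\ofraka_{s,h}) - \langle \bef, \obu_{s,h} \rangle$, forces both $\langle \bef, \obu_{s,h} \rangle \to \langle \bef, \obu_h \rangle$ and $\phi(\ofraka_{s,h}) \to \phi(\ofraka_h)$. Uniform coercivity \eqref{eq:E3} applied to $\obu_h - \obu_{s,h}$, after expanding the bilinear form using the discrete state equations satisfied by $\obu_{s,h}$ and $\obu_h$ and invoking \eqref{eq:2.22}--\eqref{eq:2.23}, then gives $\|\obu_h - \obu_{s,h}\|_{\widetilde{\bH}^s(\Omega)} \to 0$; the uniform embedding \eqref{eq:S1} upgrades this to convergence in $\bL^2(\Omega)$.

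The remaining statement, strong convergence of $\ofraka_{s,h} \to \ofraka_h$ in $L^q(\Omega)$, follows from the weak-$*$ convergence in $L^\infty(\Omega)$ (which, since $\Omega$ is bounded, implies weak convergence in $L^q$) together with the convergence $\phi(\ofraka_{s,h}) \to \phi(\ofraka_h)$ and the uniform convexity of $L^q(\Omega, \Lambda\,\diff x)$ for $q \in (1,\infty)$, via the Radon--Riesz property; equivalence of the weighted and unweighted $L^q$ norms (thanks to $\Lambda \geq \lambda > 0$) then delivers the claim. I anticipate no real obstacle: the discrete setting is finite dimensional on $\bV_h$ and $\calH_h$, and all the analytic heavy lifting has already been carried out when checking the abstract hypotheses. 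One could even bypass most of the above and observe directly that, on the fixed finite-dimensional space $\bV_h$, the forms $\frakB_s[\fraka]$ converge to $\frakB_1[\fraka]$ uniformly in $\fraka \in \calH_h$ as $s \uparrow 1$, which makes all the convergences in the statement essentially immediate.
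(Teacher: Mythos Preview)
Your proposal is correct and matches the paper's approach exactly: the paper's proof is the single line ``This is Corollary~\ref{col:LimDelta0h>0},'' and you have faithfully unpacked that invocation. Your Radon--Riesz argument for upgrading $\phi(\ofraka_{s,h}) \to \phi(\ofraka_h)$ to strong $L^q$ convergence is a detail the paper leaves implicit, and your closing remark about finite-dimensionality echoes the comment the authors make in the parallel conductivity result.
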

\begin{proof}
This is Corollary~\ref{col:LimDelta0h>0}.
\end{proof}

\subsection{Asymptotic compatibility}

Finally, we are ready to verify the assumptions that lead to asymptotic compatibility.

\begin{itemize}
  \item \textbf{Asymptotic approximation property:} This is proved, again, by using a suitable vector-valued interpolant.

  \item \textbf{Asymptotic compatibility of state equations:} One can follow the proof of the conductivity case with minor changes. Namely, in this case, we must define the Borel measures
  \begin{align*}
    \nu_k(S) &= \gamma_{s_k,n,2}\int_S \int_{\Omega_R} \frac{|\Dxy \bu_k(x,y)|^2}{|x-y|^{2s_k}} \frac{ \diff x \diff y}{|x-y|^n}, \\
    \nu(S) &= \frac1{n+2} \int_S \left( 2 |\beps(\bw)(x)|^2 + |\DIV \bw(x)|^2 \right) \diff x.
  \end{align*}
  One can then invoke \cite[Corollary 3.7]{MR3424902} to assert that
  \[
    \nu(S) \leq \lim_{k \uparrow \infty} \nu_k(S).
  \]
  The rest of the argument follows with no significant modifications.
\end{itemize}

In summary, we can prove asymptotic compatibility for our problem related to peridynamics.

\begin{theorem}[asymptotic compatibility]
Assume that each member of the family
\[
  \{(\ofraka_{s,h},\obu_{s,h})\}_{s \in (0,1), h \in (0,h_0)}
\]
is a solution to problem \eqref{eq:ObjPeridyn}--\eqref{eq:StatePeridyn} for the corresponding parameter values. This family is asymptotically compatible in the sense of Definition~\ref{def:ACAbs}. Moreover, we additionally have
\[
  \lim_{k \uparrow \infty} J(\ofraka_{s_k,h_k},\obu_{s_k,h_k}) = J(\ofraka,\obu),
\]
and
\[
  \lim_{k\uparrow \infty} \| \obu - \obu_{s_k,h_k} \|_{\widetilde{\bH}^{s_k}(\Omega)} = 0.
\]
\end{theorem}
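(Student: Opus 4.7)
The plan is to invoke the abstract Theorem~\ref{thm:ACAbs} directly, as every hypothesis it requires has already been verified for the peridynamic setting. In Section~\ref{sub:AnalysisPeridynamics} the structural assumptions \eqref{eq:S1}--\eqref{eq:2.24} were checked, with the uniform (in $s$) fractional Korn inequality supplying the nontrivial uniform coercivity \eqref{eq:E3}. In the discretization subsection the assumptions \eqref{eq:C1}, \eqref{eq:PhiAndProj}, and the perturbed Galerkin projection bound \eqref{eq:Assumption2.16} were established, using a vector Meyers-type estimate in the case $s=1$ (Appendix~\ref{sec:Appendix}) and the higher integrability/differentiability result of \cite[Remark 3.5]{MR3896636} for $s\in(0,1)$. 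Finally, just above the theorem statement, the asymptotic approximation property and the asymptotic compatibility of state equations \eqref{eq:StateIsAC} were verified via a suitable vector-valued interpolant and the Borel-measure convergence result \cite[Corollary 3.7]{MR3424902}, which plays here the role that \cite{MR2041005} played in the conductivity case.

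Given these verifications, the proof is a verbatim specialization of the proof of Theorem~\ref{thm:ACAbs}. After passing to a subsequence, compactness of $\calH$ under the weak-* metric $d$ yields $\ofraka_{s_k,h_k}\overset{d}{\to}\ofraka$, and $\obu:=\bT^1(\ofraka)$ is the candidate limit with $(\ofraka,\obu)\in\calZ^0$. The $\liminf$ bound on $\langle\bef,\obu_{s_k,h_k}\rangle$ follows from the energy characterization \eqref{eq:OptCondStateAbs} by testing with the Galerkin projection $\calG_{0,h_k}[\ofraka]\obu \in \bV_{h_k}\hookrightarrow X_{s_k,h_k}$ (using \eqref{eq:DiscrEmbedding}), combined with \eqref{eq:2.22}--\eqref{eq:2.23} applied to $\obu\in X_0=\bL^2(\Omega)$. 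The matching $\limsup$ bound uses the admissible comparison pair $(\Pi_{h_k}\ofraka,\bT_{h_k}^{s_k}(\Pi_{h_k}\ofraka))\in\calZ_{h_k}^{s_k}$ together with \eqref{eq:PhiAndProj} and \eqref{eq:StateIsAC}; these two bounds combined with the lower semicontinuity of $\phi$ force convergence of $J$. Optimality of $(\ofraka,\obu)$ against arbitrary $(\frakb,\bv)\in\calZ^0$ then follows by the analogous comparison with $(\Pi_{h_k}\frakb,\bT_{h_k}^{s_k}(\Pi_{h_k}\frakb))$.

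Strong convergence \eqref{eq:BBMTrickAbs} in $\widetilde{\bH}^{s_k}(\Omega)$ is then obtained exactly as in Theorem~\ref{thm:ACAbs}: expand
\[
  \frakB_{s_k}[\ofraka_{s_k,h_k}](\obu-\obu_{s_k,h_k},\obu-\obu_{s_k,h_k}),
\]
use symmetry and uniform coercivity \eqref{eq:E3} to reduce to showing that each of the resulting terms converges; the cross term is split using $\calG_{0,h_k}[\ofraka]\obu$ and controlled by \eqref{eq:E2} and \eqref{eq:C1}, while $\frakB_{s_k}[\ofraka_{s_k,h_k}](\obu,\obu)\to \frakB_0[\ofraka](\obu,\obu)=\langle\bef,\obu\rangle$ by \eqref{eq:2.22}--\eqref{eq:2.23}. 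Since the argument is a straight transcription of an already-established abstract result, there is no genuine obstacle at this stage; all the conceptual work, as flagged above, was absorbed into the structural and discretization verifications, the most delicate of which is the measure-convergence step underlying \eqref{eq:StateIsAC}.
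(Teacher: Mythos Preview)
Your proposal is correct and matches the paper's own proof, which simply invokes Theorem~\ref{thm:ACAbs}. One minor slip: when you write ``$\obu\in X_0=\bL^2(\Omega)$'', recall that under the identification $\delta=1-s$ the \emph{abstract} space $X_0$ (the one appearing in \eqref{eq:2.22}--\eqref{eq:2.23}) corresponds to $\bH^1_0(\Omega)$, not to $\bL^2(\Omega)$; the argument is unaffected since indeed $\obu=\bT^1(\ofraka)\in\bH^1_0(\Omega)$.
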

\begin{proof}
  This is Theorem~\ref{thm:ACAbs}.
\end{proof}

\section{Numerical illustrations}
\label{sec:Numerics}

To illustrate and complement our theoretical developments here we present some numerical simulations of the optimal design problems of Section~\ref{sec:Conductivity}. The implementation was carried out using the PyNucleus library \cite{PyNucleus}. For definiteness we set $\Lambda \equiv \tfrac12$ and $q = 2$. Thus, the objective functional reads
\begin{equation}
\label{eq:ObjectiveNumExp}
  J(\fraka,u) = \int_\Omega f(x) u(x) \diff x + \frac12 \| \fraka \|_{L^2(\Omega)}^2.
\end{equation}
The discrete reduced cost is then defined as
\begin{equation}
\label{eq:ReducedCostNumExp}
  r_h^s(\fraka) = \int_\Omega f(x) T_h^s[\fraka](x) \diff x + \frac12 \| \fraka \|_{L^2(\Omega)}^2,
\end{equation}
where $T_h^s$ denotes the discrete design to state mapping.

\subsection{Projected gradient descent}
To find stationary points of \eqref{eq:ReducedCostNumExp} we proceed via a projected descent algorithm. Starting from $\fraka_0 = \tfrac12( a_{\min} + a_{\max} )$ we iterate via
\begin{equation}
\label{eq:DescentPrev}
  \frac{ \widetilde{\fraka}_{k+1} - \fraka_k } \tau =- \Dxy r_h^s[\fraka_k], \qquad \fraka_{k+1} = \underset{\calH_h}{ \Pr} \ \widetilde{\fraka}_{k+1}.
\end{equation}
Here $\tau$ is a user-defined parameter (the step size), $\Pr_{\calH_h}$ is the $L^2$-projection onto the set $\calH_h$, and $\Dxy r_h^s$ denotes the Gateaux derivative of the reduced cost, which is obtained in the following result.

\begin{lemma}[$\Dxy r_h^s$]
\label{lem:DerivR}
For any $\fraka \in \mathring\calH$ the Gateaux derivative of $r_h^s$, defined in \eqref{eq:ReducedCostNumExp}, in the direction $\frakb \in L^\infty(\Omega)$ is given by
\[
  \Dxy r_h^s[\fraka](\frakb) = \int_\Omega f(x) \Dxy T_h^s[\fraka](\frakb) \diff x + \int_\Omega \fraka(x) \frakb(x) \diff x,
\]
where $\Dxy T_h^s[\fraka](\frakb)$ denotes the Gateaux derivative of the design to state mapping. The function $z_h^s = \Dxy T_h^s[\fraka](\frakb) \in V_h$ is the solution to
\[
  \frakB_s[\fraka](z_h^s,v_h) = -\frakB_s[\frakb]( T_h^s[\fraka],v_h) , \qquad \forall v_h \in V_h
\]
\end{lemma}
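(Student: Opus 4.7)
The plan is to differentiate the reduced cost term by term using the chain rule. Since $r_h^s(\fraka) = \langle f, T_h^s[\fraka] \rangle + \tfrac12 \|\fraka\|_{L^2(\Omega)}^2$, the penalty term is a standard quadratic functional on $L^2(\Omega)$, so its Gateaux derivative in a direction $\frakb \in L^\infty(\Omega) \hookrightarrow L^2(\Omega)$ is immediately $\int_\Omega \fraka(x)\frakb(x)\diff x$. The nontrivial content is then the differentiability of the discrete design-to-state mapping $T_h^s : \calH \to V_h$ and the identification of its derivative $z_h^s = \Dxy T_h^s[\fraka](\frakb)$.

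To establish Gateaux differentiability of $T_h^s$ at $\fraka \in \mathring\calH$, I would argue as follows. Because $\fraka$ lies in the interior of $\calH$, there exists $t_0>0$ such that $\fraka + t\frakb \in \calH$ for $|t|<t_0$, and Lemma~\ref{lem:DiscrStatesAreOKAbs} applies uniformly for such $t$. The key structural observation is that the map $\fraka \mapsto \frakB_s[\fraka]$ is \emph{linear} in the coefficient: inspecting \eqref{eq:BLFormsConductivity} (or \eqref{eq:BLFormsPeridyn}), the symmetrized coefficient $\frakA$ depends affinely on $\fraka$, and hence for fixed $v_h, w_h \in V_h$ one has $\frakB_s[\fraka + t\frakb](v_h,w_h) = \frakB_s[\fraka](v_h,w_h) + t\frakB_s[\frakb](v_h,w_h)$. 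Setting $u_h(t) = T_h^s[\fraka + t\frakb]$ and testing the discrete state equation against any $v_h \in V_h$, I subtract the equations at $t$ and $0$ to obtain
\[
  \frakB_s[\fraka]\!\left(\tfrac{u_h(t) - u_h(0)}{t}, v_h\right) = -\frakB_s[\frakb](u_h(t),v_h).
\]

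Next, I pass to the limit $t \to 0$. The uniform coercivity \eqref{eq:E3} and continuous dependence give that $u_h(t) \to u_h(0) = T_h^s[\fraka]$ in $X_{s,h}$ as $t \to 0$, so the right-hand side converges to $-\frakB_s[\frakb](T_h^s[\fraka],v_h)$. Since $V_h$ is finite-dimensional and $\frakB_s[\fraka]$ is an inner product on it (by \eqref{eq:E2}--\eqref{eq:E3}), the difference quotient $\tfrac{u_h(t)-u_h(0)}{t}$ converges to the unique element $z_h^s \in V_h$ solving $\frakB_s[\fraka](z_h^s,v_h) = -\frakB_s[\frakb](T_h^s[\fraka],v_h)$ for all $v_h \in V_h$. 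This both confirms Gateaux differentiability and identifies $\Dxy T_h^s[\fraka](\frakb) = z_h^s$. Combining with the derivative of the penalty term yields the stated formula.

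The step I expect to be the only real technical point is the limit passage in the finite-dimensional difference-quotient system; everything else is algebraic. It is essentially trivial because the coefficient matrix of the linear system defining $u_h(t)$ is the Gram matrix of $\frakB_s[\fraka + t\frakb]$ on $V_h$, which depends affinely (hence smoothly) on $t$ and is uniformly invertible for $|t|<t_0$ by \eqref{eq:E3}; standard Cramer's rule then gives $C^\infty$ dependence of $u_h(t)$ on $t$. Differentiating at $t=0$ reproduces the variational equation claimed for $z_h^s$. No assumption beyond $\fraka \in \mathring\calH$ (to guarantee one-sided perturbations stay admissible) and $\frakb \in L^\infty(\Omega)$ is required.
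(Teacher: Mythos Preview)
Your proposal is correct and follows essentially the same route as the paper: both use the linearity of $\fraka \mapsto \frakB_s[\fraka]$ to derive the difference-quotient identity $\frakB_s[\fraka](z_\epsilon,v_h) = -\frakB_s[\frakb](u_\epsilon,v_h)$, establish continuity $u_\epsilon \to u_h$ via coercivity, and then pass to the limit. Your remark that finite-dimensionality (via smooth dependence of the Gram matrix on $t$) makes the limit passage elementary is a helpful gloss, but the argument is otherwise the same.
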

\begin{proof}
Clearly, for $\epsilon>0$,
\[
  \frac{r_h^s[\fraka+ \epsilon \frakb] - r_h^s[\fraka]}\epsilon  = \int_\Omega f(x) \frac{T_h^s[\fraka + \epsilon \frakb] - T_h^s[\fraka]}\epsilon \diff x + \int_\Omega \fraka(x) \frakb(x) \diff x + \frac\epsilon2 \| \frakb\|_{L^2(\Omega)}^2.
\]
Sending $\epsilon \downarrow 0$ yields the desired expression for $\Dxy r_h^s$, provided $T_h^s$ is Gateaux differentiable.

To compute $\Dxy T_h^s$ we first show its continuity. To see this we let $u_h = T_h^s[\fraka] \in V_h$ and observe that, since $\fraka \in \mathring{\calH}_h$, for $\epsilon$ sufficiently small the function $u_\epsilon = T_h^s[\fraka + \epsilon \frakb] \in V_h$ is well-defined. Next we compute
\begin{align*}
  a_{\min} | u_h - u_\epsilon |_{\widetilde{H}^s(\Omega)}^2 &\leq \frakB_s[\fraka](u_h - u_\epsilon, u_h - u_\epsilon) \\
  &= \int_\Omega f(x) (u_h - u_\epsilon)(x) \diff x - \frakB_s[\fraka]( u_\epsilon, u_h - u_\epsilon) \\
  &= \frakB_s[\fraka + \epsilon \frakb](u_\epsilon, u_h - u_\epsilon)- \frakB_s[\fraka]( u_\epsilon, u_h - u_\epsilon) \\
  &= \epsilon \frakB_s[\frakb](u_\epsilon, u_h - u_\epsilon),
\end{align*}
where we used the linearity of the mapping $\fraka \mapsto \frakB_s[\fraka]$. Since the family $\{ u_\epsilon\}_{\epsilon>0}$ can be bounded independently of $\epsilon$, we have the continuity of $T_h^s$.

Let $\epsilon$ be sufficiently small and denote $z_\epsilon = \tfrac1\epsilon ( T_h^s[\fraka+ \epsilon \frakb] - T_h^s[\fraka] )$. The previous computations also show that
\[
  \frakB_s[\fraka](z_\epsilon, v_h) = - \frakB_s[\frakb](u_\epsilon,v_h), \qquad \forall v_h \in V_h.
\]
Passing to the limit $\epsilon \downarrow 0$, we see that $z_\epsilon \to z_h^s = \Dxy T_h^s[\fraka](\frakb)$.
\end{proof}

\subsection{Implementation details}
With the identifications of Lemma~\ref{lem:DerivR} the projected gradient descent method \eqref{eq:DescentPrev} may be rewritten, for $\frakb \in \calL_0^{0}(\Triang_h)$, as
\begin{equation}
\label{eq:DescentMid}
  \int_\Omega \frac{ \widetilde{\fraka}_{k+1}(x) - \fraka_k(x)}\tau \frakb(x) \diff x =  - \int_\Omega f(x) z_k(x) \diff x - \int_\Omega \fraka_k(x) \frakb(x) \diff x
\end{equation}
with $z_k = \Dxy T_h^s[\fraka_k]$, \ie
\[
  \frakB_s[\fraka_k](z_k,v_h) = - \frakB_s[\frakb](T_h^s[\fraka_k],v_h), \qquad \forall v_h \in V_h.
\]
Set now $v_h = T_h^s[\fraka_k]$ and use the symmetry of the bilinear form to infer
\[
  \int_\Omega f(x) z_k(x) \diff x = \frakB_s[\fraka_k](z_k, T_h^s[\fraka_k]) = - \frakB_s[\frakb](T_h^s[\fraka_k], T_h^s[\fraka_k]).
\]
In summary, the descent algorithm may be rewritten as follows. Let $\fraka_0 = \tfrac12(a_{\min} + a_{\max} )$. For $k \geq 0$:
\begin{enumerate}
  \item Compute $u_k = T_h^s[\fraka_k]$, \ie
  \[
    \frakB_s[\fraka_k](u_k, v_h) = \int_\Omega f(x) v_h(x) \diff x, \qquad \forall v_h \in V_h.
  \]

  \item Let $\widetilde{\fraka}_{k+1} \in \calL_0^0(\Triang_h)$ solve, for every $\frakb \in \calL_0^0(\Triang_h)$,
  \[
    \int_\Omega \widetilde{\fraka}_{k+1}(x) \frakb(x) \diff x = \tau \frakB_s[\frakb](u_k,u_k) +(1-\tau) \int_\Omega \fraka_k(x) \frakb(x) \diff x.
  \]

  \item The next iterate $\fraka_{k+1}$ is:
  \[
    \fraka_{k+1} = \Pr_{\calH_h} \widetilde{\fraka}_{k+1}.
  \]
\end{enumerate}
Several comments about this algorithm are in order.
\begin{itemize}
  \item At each step, to compute $u_k$ the assembly and inversion of a stiffness matrix is needed.

  \item Owing to the fact that $\calH_h$ consists of piecewise constants, one only needs to test the equation for $\widetilde{\fraka}_{k+1}$ against the characteristic of each element, \ie we set $\frakb = \chi_T$ with $T \in \Triang_h$ and obtain
  \[
    \widetilde{\fraka}_{k+1|T} = \frac\tau{|T|} \frakB_s[\chi_T](u_k,u_k) + (1-\tau) \fraka_{k|T}.
  \]
  The matrices $\{ \frakB_s[\chi_T](\cdot,\cdot)\}_{T \in \Triang_h}$ can then be assembled, once and in parallel, at the beginning of the iterative procedure and then utilized at every iteration.

  \item This preprocessing step is extremely memory-demanding, and it is the most computationally intensive part of our method. In fact, to obtain the results we present below, we needed to fully employ our computational server: Intel Xeon Gold 6246R CPU running at 3.40 GHz with a total of 640 GB of RAM and 28 GB of swap space. This machine was running Fedora release 36 configured with Linux kernel 6.2.13. A multiprocessing pool of size 20, based on Python version 3.10.11, was used to assemble these matrices in parallel. The assembly and solving of linear systems, as mentioned above, was carried out using PyNucleus version 1.0. For reference, a small nonlocal problem with $\dim V_h = 961$ requires $19933.23 s \approx 5.5h$ to finalize.

  \item Since $\calH_h$ consists of piecewise constants, the projection can be easily defined element-wise via
  \[
    \Pr_{\calH_h} \frakb_{|T} = \max\left\{ \min\{ a_{\max}, \frakb_{|T} \}, a_{\min} \right\}, \qquad \forall T \in \Triang_h.
  \]
  
  \item The code written is publicly available in a GitHub repository, located at \cite{SiktarCode}.
\end{itemize}

\subsection{Results}
We let $n=2$, $\Omega = B_1$, $a_{\min} = 0.1$, $a_{\max} = 2.0$. The value of $R$ is specified in each experiment. The stepsize is set to $\tau = \tfrac14$, and the projected descent algorithm is run for the specified number of iterations.

\begin{figure}
  \begin{center}
    \includegraphics[scale = 0.51]{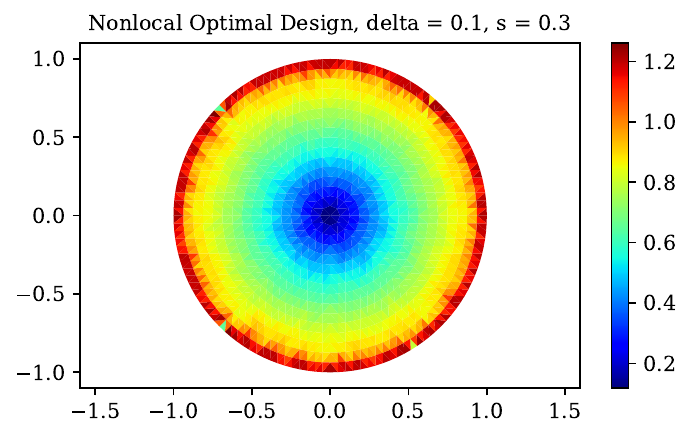}
    \includegraphics[scale = 0.5]{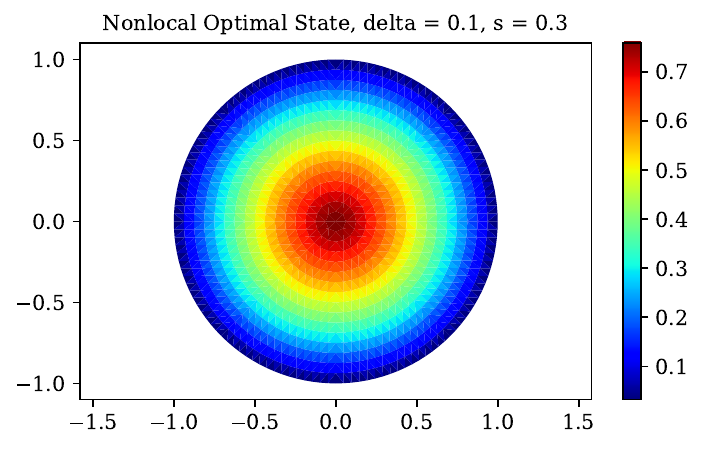}

    \includegraphics[scale = 0.51]{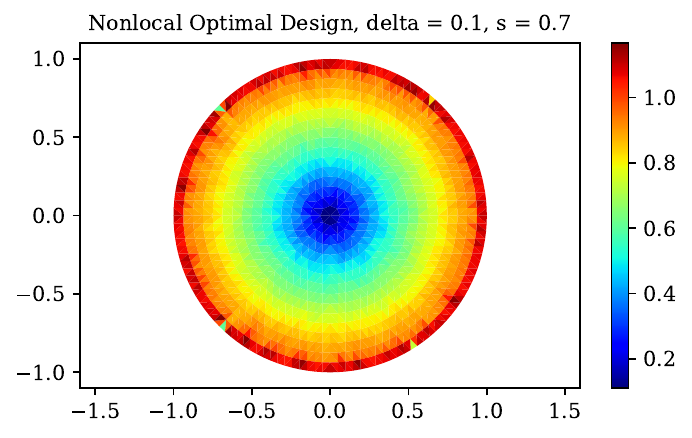}
    \includegraphics[scale = 0.5]{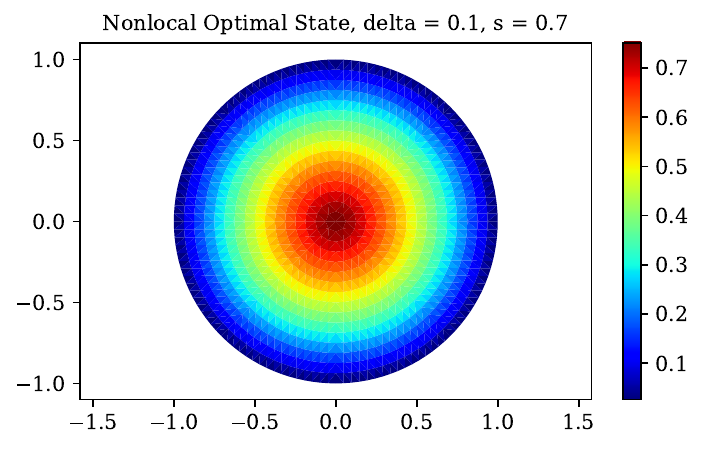}

    \includegraphics[scale = 0.51]{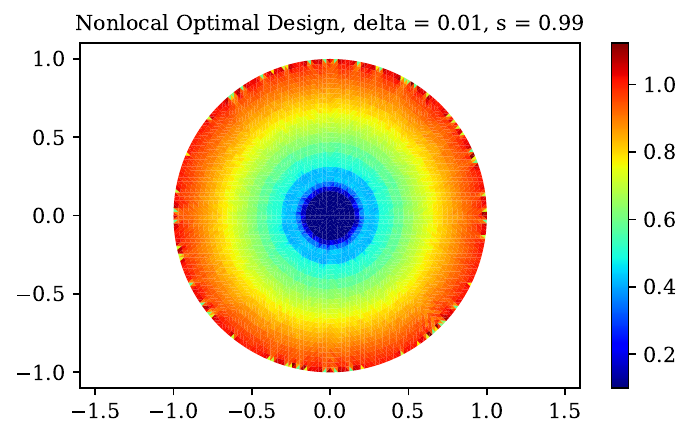}
    \includegraphics[scale = 0.5]{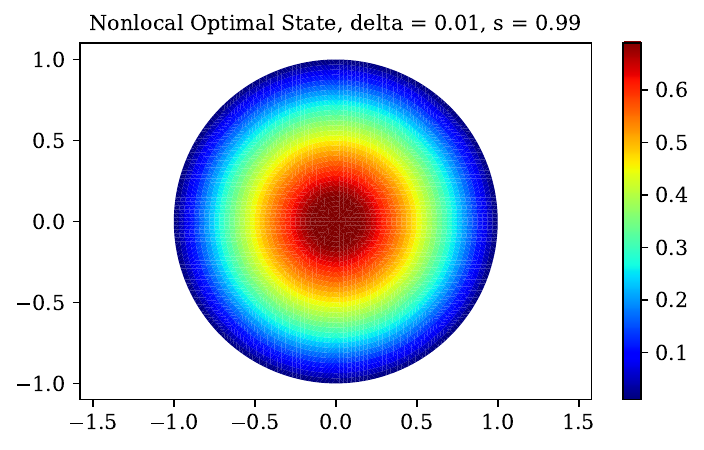}

    \includegraphics[scale = 0.51]{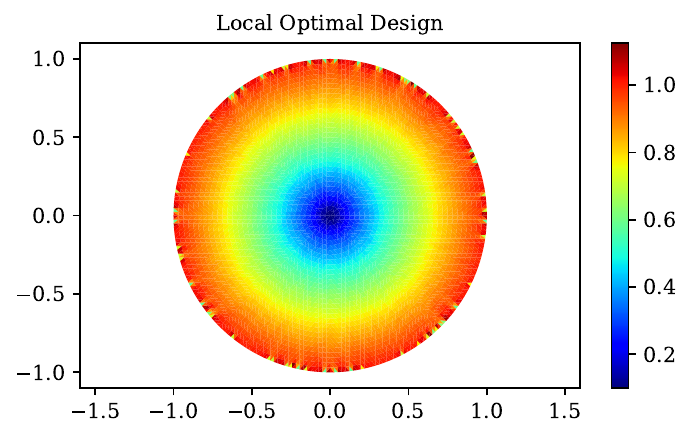}
    \includegraphics[scale = 0.5]{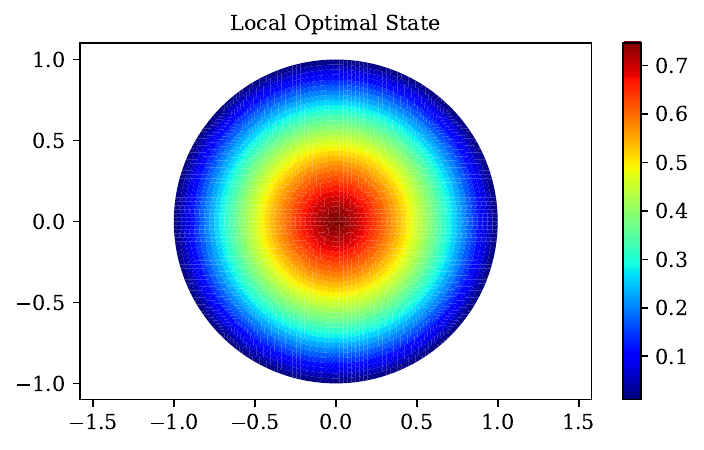}
  \end{center}
  \caption{Optimal design and state for $f \equiv 1$ and different values of $s$ and $R$. The value of $s$ is given in each plot, where $R$ is denoted as \textsf{\textup{delta}}. Notice that the last row corresponds to the local problem.}
  \label{fig:ConstRHSNL}
\end{figure}

\begin{table}
  \begin{center}
    \begin{tabular}{|| c | c | c | c | c | c | c | c||}
      \hline
      DOFs & Its & $s$ & $R$ & $\| \ou_{s,h} \|_{L^2(\Omega)}$ & $[\ou_{s,h} ]_{\widetilde{H}^s(\Omega)}$ & $\| \ofraka_{s,h} \|_{L^2(\Omega)}$ & $r_h^{s}(\ofraka_{s, h})$ \\
      \hline\hline
      $961$   & $50$  & $0.3$   & $0.1$   & $0.669808$  & $2.227475$ & $1.421823$ & $2.027113$ \\
      $961$   & $50$  & $0.7 $  & $0.1$   & $0.653769$  & $2.202400$ & $1.396489$ & $1.956274$\\
      $3969$  & $150$ & $0.99$  & $0.01$  & $0.635624$  & $2.168850$ & $1.370081$ & $1.883284$ \\
      $3969$  & $20$  & $1$     & $0$     & $0.636809$  & $2.175644$ & $1.370092$  & $1.882695$ \\
     \hline
    \end{tabular}
  \end{center}
\caption{Experimental data for optimal design problems with $f \equiv 1$. Notice that the last row corresponds to the local problem. $\mathrm{DOFs} = \dim\mathcal{H}_h$, \textup{Its} = Number of Iterations.}
\label{tab:ConstRHSNL}
\end{table}

In the first set of experiments we set $f \equiv 1$. The results are presented in Figure~\ref{fig:ConstRHSNL} and Table~\ref{tab:ConstRHSNL}. In both the Figure and Table, the last row corresponds to the local problem. As we observe qualitatively in Figure~\ref{fig:ConstRHSNL}, and quantitatively, say, through the value of $r_h^s(\ofraka_{s,h})$ (last column of Table~\ref{tab:ConstRHSNL}) as we refine the mesh and increase the value of $s$ towards $1$ the solutions converge to one of the continuous local problem.

\begin{figure}
  \begin{center}
    \includegraphics[scale = 0.51]{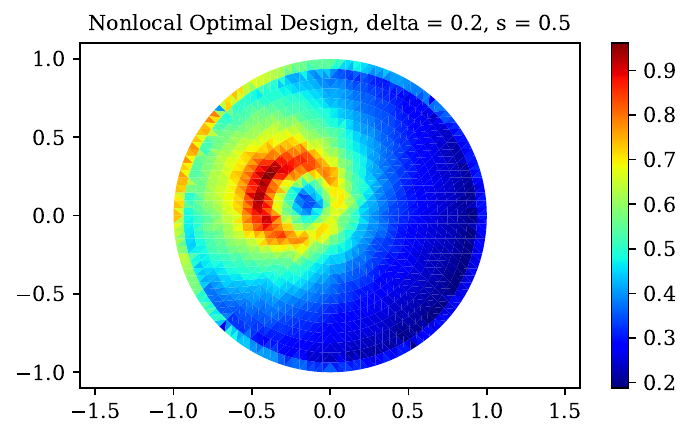}
    \includegraphics[scale = 0.5]{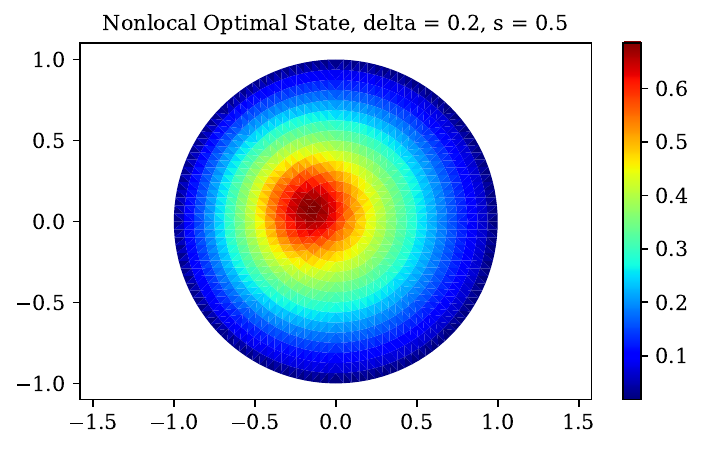}

    \includegraphics[scale = 0.51]{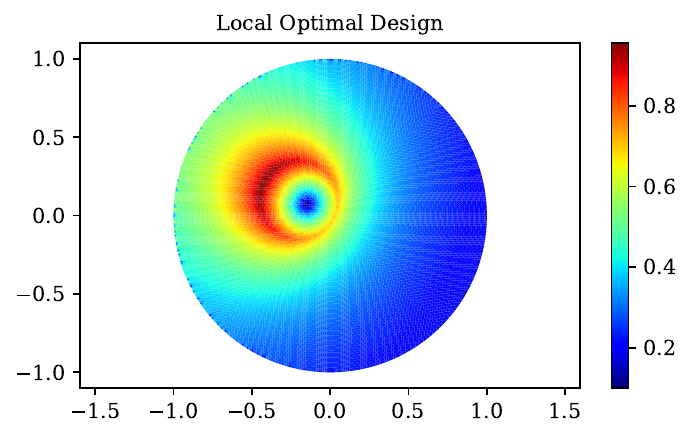}
    \includegraphics[scale = 0.5]{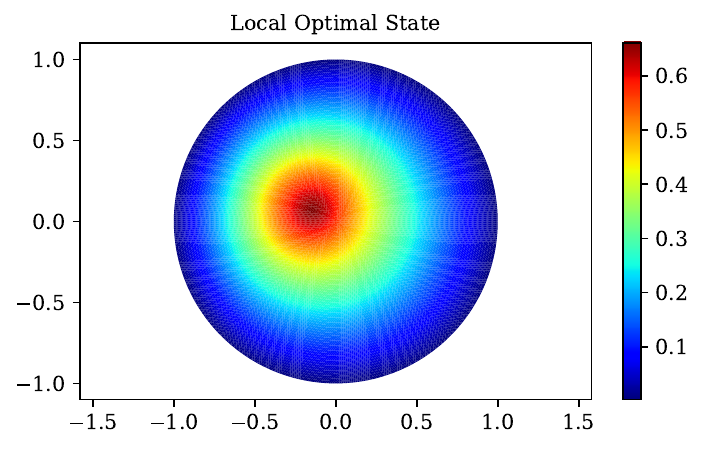}
  \end{center}
  \caption{Optimal design and state for $f = 3\chi_D$ with $D = B_{\tfrac14}(-0.2,0.1)$. The top row depicts the optimal design and state for $s=0.5$ and $R=0.2$. The bottom row corresponds to the local problem.}
  \label{fig:LocalizedRHSNL}
\end{figure}

\begin{table}
  \begin{center}
    \begin{tabular}{|| c | c | c | c | c | c | c | c||}
      \hline
      DOFs & Its & $s$ & $R$ & $\| \ou_{s,h} \|_{L^2(\Omega)}$ & $| \ou_{s,h} |_{\widetilde{H}^s(\Omega)}$ & $\| \ofraka_{s,h} \|_{L^2(\Omega)}$ & $r_h^{s}(\ofraka_{s, h})$ \\
      \hline\hline
      $961$   & $20$ & $0.5$ & $0.2$ & $0.499126$ & $1.670406$ & $0.833187$ &$0.694877$  \\
      $16129$ & $20$ & $1$   & $0$   & $0.453425$ & $1.604806$ & $0.807133$ & $0.652004$ \\
     \hline
    \end{tabular}
  \end{center}
\caption{Experimental data for optimal design problems with $f = 3\chi_D$ with $D = B_{\tfrac14}(-0.2,0.1)$. Notice that the last row corresponds to the local problem. $\mathrm{DOFs} = \dim\mathcal{H}_h$, \textup{Its} = Number of Iterations.}
\label{tab:LocalizedRHS}
\end{table}

As a second example we set $f = 3\chi_D$ with $D = B_{\tfrac14}(-0.2,0.1)$. Figure~\ref{fig:LocalizedRHSNL} and Table~\ref{tab:LocalizedRHS} show the results. The same type of convergence can be observed.

\section*{Acknowledgements}

TM  has been partially supported by NSF grants DMS-1910180 and DMS-2206252.
AJS has been partially supported by NSF grants DMS-2111228 and DMS-2409918.
JMS has been partially supported by NSF grant DMS-2111228.

The authors are grateful to Christian for answering questions pertaining to the development of the code.

\section*{Data availability statement}

All the data used for the numerical illustrations of this work were generated with an in-house developed code. The code is available as \cite{SiktarCode} and builds on the PyNucleus library \cite{PyNucleus}. PyNucleus is a Python code developed by Christian Glusa and funded by the LDRD program at Sandia National Laboratories.

\appendix

\section{A Meyers-type result for linear elasticity}
\label{sec:Appendix}

For completeness here we present, inspired by \cite[Section 2.2]{MR3129757}, a higher integrability result for the system of linear elasticity. We need this in Section~\ref{sec:Peridynamics} to prove properties of the Galerkin projection, and we believe that it may be of independent interest.

We begin by establishing some setup. Let $\Omega \subset \R^n$ be a bounded Lipschitz domain. For $p \in (1,\infty)$ we endow $\bW^{1,p}_0(\Omega)$ with the norm
\[
  \| \bw \|_{E,p} = \left( 2 \| \beps(\bw) \|_{\bL^p(\Omega)}^p + \| \DIV \bw \|_{L^p(\Omega)}^p \right)^{1/p}.
\]
The space $\bW^{-1,p}(\Omega)$ is then normed with the induced norm. Clearly, these norms are equivalent to the canonical ones.

We assume that the Lam\'e parameters $\lambda, \mu \in L^\infty(\Omega)$ satisfy
\[
  \lambda(x), \mu(x) \in [a,A], \quad \mae x \in \Omega,
\]
for some $0<a\leq A$.

We consider now the following problem: let $p \in (1,\infty)$ and $\bef \in \bW^{-1,p}(\Omega)$. Find $\bu \in \bW^{1,p}_0(\Omega)$ such that, in $\bW^{-1,p}(\Omega)$,
\begin{equation}
\label{eq:Elastic}
  -2\DIV( \mu \beps(\bu) ) - \GRAD( \lambda \DIV \bu ) = \bef.
\end{equation}
Our goal here shall be to show that there is $\varepsilon>0$ that depends only on the domain, $a$, and $A$, such that if $|p-2| < \varepsilon$, then this problem has a solution.

\noindent \textbf{The constant coefficient case.} Assume that $\lambda \equiv \mu \equiv 1$ and define the mapping $\bT: \bW^{1,p}_0(\Omega) \to \bW^{-1,p}(\Omega)$ as
\[
  \bT \bw = -2\DIV( \beps(\bw) ) - \GRAD( \DIV \bw ).
\]
If $p = 2$ this mapping is clearly invertible and, because of the way we have chosen the norms, $\| \bT^{-1} \|_2 = 1$. On the other hand, owing to \cite[Section 13.6]{MR2181934} and \cite[Theorem 14]{MR1781091} there is $\wp>2$, that depends only on the domain, for which the operator $\bT$ is boundedly invertible in $\bW^{1,\wp}_0(\Omega)$. Let $\| \bT^{-1} \|_\wp = K$. The Riesz-Thorin interpolation theorem \cite{MR482275} then shows that
\[
  \| \bT^{-1} \|_p \leq K^\theta, \qquad \frac1p = \frac{1-\theta}2 + \frac\theta\wp, \qquad p \in [2,\wp].
\]

\noindent \textbf{A perturbation argument.} Given Lam\'e parameters $\mu$ and $\lambda$ we define mappings $\bE,\bQ: \bW^{1,p}_0(\Omega) \to \bW^{-1,p}(\Omega)$ via
\[
  \bE\bw = -\frac2A\DIV( \mu \beps(\bw) ) - \frac1A\GRAD( \lambda \DIV \bw ), \qquad \bQ = \bT - \bE.
\]
Notice now that, for every $p \in (1,\infty)$, $ \| \bE \|_p \leq 1 $ and, with $q = \tfrac{p}{p-1}$,
\begin{multline*}
  \| \bQ \|_p =  \sup_{\bw \in \bW^{1,p}_0(\Omega)} \sup_{\bv \in \bW^{1,q}_0(\Omega)} \frac1{\| \bw \|_{E,p} \| \bv \|_{E,q}} \left[
    2 \int_\Omega \left( \frac\mu{A} -1 \right) \beps(\bw(x)):\beps(\bv(x)) \diff x + \right. \\
    \left. \int_\Omega \left(\frac\lambda{A} -1 \right) \DIV(\bw(x)) \DIV(\bv(x)) \diff x \right] \leq
    \left| \frac{a}A -1 \right|.
\end{multline*}

Next observe that, if $\bI$ is the identity operator,
\[
  \bE = \bT - \bQ = \bT\left( \bI - \bT^{-1} \bQ \right).
\]
This shows then that, if $p \in [2,\wp]$, the operator $\bE$ is invertible in $\bW^{1,p}_0(\Omega)$ provided
\[
  \| \bT^{-1} \|_p \| \bQ \|_p \leq K^\theta \left( 1- \frac{a}A \right) < 1, \qquad \frac1p = \frac{1-\theta}2 + \frac\theta\wp.
\]
Given $a$, $A$, and $\wp$ one can clearly chose a sufficiently small $\theta$ (i.e., $p$ is sufficiently close to $2$) so that the requisite inequality is satisfied.

As a consequence we have the following Corollary.

\begin{corollary}[higher integrability]
In the setting of Section~\ref{sec:Peridynamics} assume that $\Omega$ is Lipschitz, $\fraka \in \calH$, $\bef \in \bL^2(\Omega)$, and that $\bu \in \bH^1_0(\Omega)$ solves
\[
  \frakB_1[\fraka](\bu,\bv) = \int_\Omega \bef(x) \cdot \bv(x) \diff x, \qquad \forall \bv \in \bH^1_0(\Omega).
\]
Then, there is $\varepsilon>0$ that depends only on $\Omega$, $a_{\min}$, and $a_{\max}$ such that $\bu \in \bW^{1,2+\varepsilon}(\Omega)$, with a corresponding estimate.
\end{corollary}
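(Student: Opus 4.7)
The plan is to reduce the corollary to a direct application of the perturbation argument established above. First, observe that the bilinear form $\frakB_1[\fraka]$ is, up to the scalar factor $\tfrac{1}{n+2}$, exactly the weak form of the elasticity operator \eqref{eq:Elastic} with Lam\'e parameters $\mu = \lambda = \fraka$. Since $\fraka \in \calH$, this places us in the appendix's setting with $a = a_{\min}$ and $A = a_{\max}$. The state equation is then equivalent to $\bE \bu = \tfrac{n+2}{a_{\max}} \bef$ in $\bW^{-1, p}(\Omega)$, where $\bE$ is the normalized operator of the perturbation argument.

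Next, in order to apply $\bE^{-1}$ at an exponent $p > 2$, we need to interpret $\bef \in \bL^2(\Omega)$ as an element of $\bW^{-1, p}(\Omega)$. This is a consequence of the standard Sobolev embedding $\bW^{1, p'}_0(\Omega) \hookrightarrow \bL^2(\Omega)$, valid for $p' = p/(p-1)$ near $2$ whenever $p \in [2, 2+\varepsilon_0]$ with $\varepsilon_0 > 0$ depending only on $n$ and $\Omega$; by duality, $\bL^2(\Omega) \hookrightarrow \bW^{-1, p}(\Omega)$ with a uniform embedding constant on that range.

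Finally, with $\wp > 2$ and $K = \|\bT^{-1}\|_{\wp}$ furnished by the constant-coefficient step, we pick $\theta \in (0,1)$ small enough that
\[
  K^{\theta} \left( 1 - \tfrac{a_{\min}}{a_{\max}} \right) < 1 \qquad \text{and} \qquad \tfrac{1}{p} := \tfrac{1-\theta}{2} + \tfrac{\theta}{\wp} \geq \tfrac{1}{2+\varepsilon_0}.
\]
Such $\theta$ exists and depends only on $\Omega$, $a_{\min}$, $a_{\max}$, because both constraints are strict at $\theta = 0$ and thus hold on a neighborhood. The perturbation argument then yields a bounded inverse $\bE^{-1}: \bW^{-1, p}(\Omega) \to \bW^{1, p}_0(\Omega)$, producing a solution $\tilde\bu \in \bW^{1, p}_0(\Omega)$. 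Since $p \geq 2$ and $\Omega$ is bounded, $\bW^{1, p}_0(\Omega) \hookrightarrow \bH^1_0(\Omega)$, so $\tilde\bu$ is also an $\bH^1_0$-solution; Lax--Milgram uniqueness then forces $\tilde\bu = \bu$. Setting $\varepsilon = p - 2$ gives $\bu \in \bW^{1, 2+\varepsilon}(\Omega)$ with the estimate $\|\bu\|_{E, 2+\varepsilon} \leq \|\bE^{-1}\|_p \cdot \tfrac{n+2}{a_{\max}} \|\bef\|_{\bW^{-1, 2+\varepsilon}(\Omega)} \lesssim \|\bef\|_{\bL^2(\Omega)}$.

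The only real check needed is that the two smallness conditions on $\theta$ — one from the perturbation bound, one from matching the Sobolev range — can be met simultaneously, which is automatic since their parameters are decoupled ($\varepsilon_0$ depends on $n, \Omega$; the perturbation constraint on $K$ and $a_{\min}/a_{\max}$) and both constraints shrink $\theta$ monotonically. No deeper obstacle is anticipated; the content of the corollary is genuinely that of the perturbation argument, modulo this bookkeeping.
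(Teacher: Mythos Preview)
Your proposal is correct and follows essentially the same approach as the paper, which reduces the corollary to the perturbation argument via the single observation that $\bL^2(\Omega) \hookrightarrow \bW^{-1,r}(\Omega)$ for some $r>2$. You simply supply the bookkeeping (the identification $\mu=\lambda=\fraka$, the compatibility of the two smallness constraints on $\theta$, and the uniqueness step forcing $\tilde\bu=\bu$) that the paper leaves implicit.
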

\begin{proof}
  It suffices to realize that, for some $r>2$, $L^2(\Omega) \hookrightarrow W^{-1,r}(\Omega)$.
\end{proof}

\bibliographystyle{amsplain}
\bibliography{biblio}

\end{document}